\newtheorem{theorem}{Theorem}[section]
\newtheorem*{theorem*}{Theorem}
\newtheorem{construction}[theorem]{Construction}
\newtheorem{corollary}[theorem]{Corollary}
\newtheorem{definition}[theorem]{Definition}
\newtheorem{example}[theorem]{Example}
\newtheorem{lemma}[theorem]{Lemma}
\newtheorem{notation}[theorem]{Notation}
\newtheorem{proposition}[theorem]{Proposition}
\newtheorem{remark}[theorem]{Remark}
\newtheorem*{remark*}{Remark}
\DeclareMathOperator{\Cov}{Cov}
\begin{document}

\subjclass{}
\title{A Formula about W-Operator and Its Application to Hurwitz Number}
% \thanks{A sample paper.}%

\author{Hao Sun}

%\today

\maketitle

\begin{abstract}
$W$-operators are differential operators on the polynomial ring. Mironov, Morosov and Natanzon construct the generalized Hurwitz numbers. They use the $W$-operator to prove a formula for the generating function of the generalized Hurwitz numbers. A special example of the $W$-operator is the cut-and-join operator. Goulden and Jackson use the cut-and-join operator to calculate the simple Hurwitz number. In this paper, we study the relation between $W$-operator $W([d])$ and the central elements $K_{1^{n-d}d}$ in $\mathbb{C}S_n$. Based on the relation we find, we give another proof about a differential equation of the generating function of $d$-Hurwitz number.
\end{abstract}

\section{Introduction}
The Hurwitz enumeration problem \cite{Lando} aims at classifying all $n$-fold coverings of $S^2$ (or $\mathbb{C}P^1$) with $k$ branched points $\{z_1,...,z_k\}$. Given such a covering, each branched point $z_i$ corresponds to a unique permutation $\sigma_i$ in $S_n$. Denote $\lambda_i$ the partition corresponding to $\sigma_i$. The number of all connected $n$-coverings with $k$ ordered branched points $z_i, 1 \leq i \leq k$, each of which corresponds to a permutation of type $\lambda_i, 1 \leq i \leq k$, is finite. This number is denoted by $\Cov_n(\lambda_1,...,\lambda_k)$. Alternatively, $\Cov_n(\lambda_1,...,\lambda_k)$ is the number of $k$-tuples $(\sigma_1,...,\sigma_k) \in S^{k}_n$ satisfying the following conditions \cite{Carrel} \cite{Lando},
\begin{enumerate}
	\item[(1)]  $\sigma_i$ is of type $\lambda_i$,
	\item[(2)]  $\sigma_1...\sigma_k=1$,
	\item[(3)]  The subgroup generated by the elements $\{\sigma_1,...,\sigma_k\}$ is transitive.
\end{enumerate}
Given $\alpha$ a partition of $n$, the simple Hurwitz number is
\begin{align*}
	h_k(\alpha)=\Cov_n(1^{n-2}2,...,1^{n-2}2,\alpha).
\end{align*}
It is the number of $(k+1)$-tuples $(\tau_1,...,\tau_k,\sigma^{-1}) \in S^{k+1}_n$ satisfying the following conditions
\begin{enumerate}
	\item[(1)]  $\tau_i$ are transpositions (or of type $1^{n-2}2$), where $1 \leq i \leq k$, and $\sigma^{-1}$ is of type $\alpha$, $\alpha=(\alpha_1,\alpha_2,...)$,
	\item[(2)]  $\tau_1...\tau_k=\sigma$,
	\item[(3)]  the group generated by $\{\tau_1,...,\tau_k\}$ is transitive on the set $\{1,...,n\}$.
\end{enumerate}
The generating function $H$ for simple Hurwitz numbers is 	
\begin{align*}
H(z,p)=H(z,p_1,p_2,...)=\sum_{n \geq 1}\frac{1}{n!}\sum_{k=1}^{\infty}\sum_{\alpha \vdash n}h_k(\alpha)\frac{z^k}{k!}p_{\alpha_1}p_{\alpha_2}... \text{ .}
\end{align*}

The cut-and-join operator $\Delta$ is introduced by Goulden \cite{MR1249468}. The $\Delta$ operator is an infinite sum of differential operators in variables $p_i, i \geq 1$. The formula for $\Delta$ is
\begin{align*}
\Delta= \frac{1}{2}\sum_{i\geq1}\sum_{j\geq1}(ijp_{i+j}\frac{\partial^2}{\partial p_i \partial p_j}+(i+j)p_i p_j \frac{\partial}{\partial p_{i+j}}).
\end{align*}
Goulden proves the following formula,
\begin{equation}\label{eq1}
	\Phi(K_{1^{n-2}2}g)=\Delta \Phi(g),
\end{equation}
where $g$ is any element in the permutation group $S_n$, $K_{1^{n-2}2}$ is the central element of $\mathbb{C}S_n$ corresponding to the partition $(1^{n-2}2)$ and $\Phi$ is a linear map from the group ring $\mathbb{C}S_n$ to the polynomial ring $\mathbb{C}[p_1,p_2,...]$. This formula plays an important role in calculating the simple Hurwitz numbers \cite{MR1396978}. Also, Carrel use this formula to prove the following formula for the generating function $H(z,p)$ of simple Hurwitz numbers \cite{Carrel},
\begin{align*}
\frac{\partial e^{H(z,p)}}{\partial z} = \Delta e^{H(z,p)}.
\end{align*}

Mironov et al .\cite{MR2864467} constructed $W$-operators $W([\lambda])$, where $\lambda$ is a partition of some positive integer. They are differential operators acting on the space $\mathbb{C}[[X_{ij}]]_{i,j \geq 1}$ of formal series in variables $X_{ij}$ $(i,j \geq 0)$, where $X_{ij}$ are coordinate functions on the infinite matrix. A subring of $\mathbb{C}[[X_{ij}]]_{i,j \geq 1}$ is $\mathbb{C}[p_1,p_2,...]$, where $p_k=Tr(X^k)$ and $X=(X_{ij})_{i,j \geq 1}$. A direct calculation shows that $W([2])$ is the cut-and-join operator $\Delta$ on the ring $\mathbb{C}[p_1,p_2,...]$.

In section 2, we review the definition and some properties of $W$-operators.

In section 3, we prove an important property of the $W$-operator.
\begin{theorem*}\textbf{\ref{112}}
	For any $g \in \mathbb{C}S_n$,
	\begin{align*}
		\Phi(K_{1^{n-d}d}g)=W([d]) \Phi(g),
	\end{align*}
where $K_{1^{n-d}d}$ is the central element in $\mathbb{C}S_n$ corresponding to the partition $(1^{n-d}d)$.
\end{theorem*}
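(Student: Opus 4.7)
The plan is to reduce the identity to an eigenvalue comparison by passing to the Schur basis. By linearity of both sides, it suffices to treat the case in which $g = \sigma$ is a single permutation of cycle type $\mu$; in that case $\Phi(\sigma)$ is (up to a standard normalization) the power-sum polynomial $p_\mu$.

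First I would use the Frobenius formula
\begin{align*}
p_\mu = \sum_{\lambda \vdash n} \chi^\lambda(\mu)\, s_\lambda
\end{align*}
to expand $\Phi(\sigma)$ in Schur functions. From the properties of $W$-operators recalled in Section 2, each Schur polynomial $s_\lambda$ is an eigenvector of $W([d])$ with eigenvalue $w_\lambda([d])$ depending only on $\lambda$ and $d$, so the right-hand side of the claimed identity becomes $\sum_\lambda \chi^\lambda(\mu)\, w_\lambda([d])\, s_\lambda$.

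On the left-hand side, because $K_{1^{n-d}d}$ lies in the center of $\mathbb{C}S_n$, Schur's lemma implies that it acts on each irreducible representation $V^\lambda$ as the scalar
\begin{align*}
\kappa_\lambda([d]) = \frac{|K_{1^{n-d}d}|\,\chi^\lambda(1^{n-d}d)}{\dim V^\lambda}.
\end{align*}
Since $\Phi$ is (up to normalization) the Frobenius characteristic map, the Schur component of $\Phi(g)$ indexed by $\lambda$ records the projection of $g$ onto the $\lambda$-isotypic summand under the Wedderburn decomposition $\mathbb{C}S_n \cong \bigoplus_\lambda \mathrm{End}(V^\lambda)$, and multiplication by $K_{1^{n-d}d}$ therefore rescales this component by $\kappa_\lambda([d])$. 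Thus the left-hand side becomes $\sum_\lambda \chi^\lambda(\mu)\, \kappa_\lambda([d])\, s_\lambda$.

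The proof therefore reduces to the eigenvalue identity $w_\lambda([d]) = \kappa_\lambda([d])$ for every $\lambda \vdash n$, and I expect this to be the main obstacle. If Section 2 already expresses $w_\lambda([d])$ in precisely this form then the comparison is immediate; otherwise one must compute $W([d])\, s_\lambda$ directly from the definition of $W([d])$ as a matrix differential operator acting on $\mathbb{C}[[X_{ij}]]$, restricted to the trace subring $\mathbb{C}[p_1,p_2,\ldots]$, and unwind the resulting combinatorial sum into character values on the class $(1^{n-d}d)$. Once the two scalars are matched, both sides agree coefficient-wise in the Schur basis and the theorem follows.
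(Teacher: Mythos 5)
Your linear-algebra frame is set up correctly: $\Phi$ is, up to the factor $n!$, the Frobenius characteristic map, multiplication by the central element $K_{1^{n-d}d}$ rescales the $\lambda$-isotypic component by $\kappa_\lambda([d])=|K_{1^{n-d}d}|\,\chi^\lambda(1^{n-d}d)/\dim V^\lambda$, and the theorem is indeed equivalent to the eigenvalue statement $W([d])\,s_\lambda=\kappa_\lambda([d])\,s_\lambda$ for all $\lambda\vdash n$ and all $n$. The gap is that nothing supplies this eigenvalue statement. Section 2 of the paper does not contain it: it only defines $W([d])$ and quotes (Theorem \ref{18}) that $W([d])$ preserves $\mathbb{C}[p_1,p_2,\ldots]$; no action on Schur functions is recorded anywhere. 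And precisely because your Wedderburn/Frobenius dictionary makes the eigenvalue identity \emph{equivalent} to the theorem, your argument as written either assumes the conclusion --- the eigenfunction property of Schur functions under $W$-operators is the main result of Mironov--Morozov--Natanzon \cite{MR2864467}, which this paper sets out to establish by independent means --- or defers all content to your unexecuted fallback (``compute $W([d])\,s_\lambda$ directly from the definition and unwind into character values''). That fallback is not easier than the original claim: $W([d])$ is defined through the variables $X_{ij}$, not through the $p_k$, so extracting its action on $s_\lambda$ forces exactly the monomial-level analysis your reduction was meant to avoid.

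For comparison, the paper works in the opposite basis, where no character theory is needed. It expands $\Phi(g)$ into monomials $M_{\bar g}(a_1,\ldots,a_n)$, encodes monomials as quivers, and shows (Remark \ref{29}, Lemma \ref{210}) that for distinct generic indices $D_{(t_{i_1},\ldots,t_{i_d})}M_{\bar g}=M_{\bar\sigma\bar g}$, where $\bar\sigma$ is the $d$-cycle $(t_{i_1}\,\ldots\,t_{i_d})$, so that $\bar W([d])M_{\bar g}=\sum_{\bar\sigma}M_{\bar\sigma\bar g}$ summed over $d$-cycles. The substance of the paper's proof (Cases 1--3) is then a careful count verifying that under every degenerate evaluation --- repeated indices $a_i$, or repeated factors $X_{a_ia_{i+1}}$, where single operators $D_{(a_{i_1},\ldots,a_{i_d})}$ can account for two or more $d$-tuples at once --- the nontrivial operators in $W([d])$ still correspond to exactly ${n \choose d}d!$ $d$-tuples, matching the $d$-cycle count in $\bar S_n$. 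If you want to rescue your route, you must prove the eigenvector identity independently (say, by identifying $W([d])$ with a Casimir-type element for $\mathfrak{gl}(\infty)$ and computing its scalar on highest weight $\lambda$), but at that point you are reproducing the result of \cite{MR2864467} rather than proving the theorem from the definitions, which is what the paper's elementary counting argument accomplishes.
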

This property is very similar to the Equation \eqref{eq1} of the cut-and-join operator.

In section 4, we use permutation groups to give another construction of $W$-operators $W([d])$.

In section 5, we generalize the simple Hurwitz numbers and define a new type of Hurwitz numbers $h^{d}_k(\alpha)=\Cov_d(1^{n-d}d,...,1^{n-d}d,\alpha)$, which is the number of all $n$-coverings with $k+1$ branched points, where $k$ of them correspond to $d$-cycles in $S_n$ and the last one corresponds to a permutation of type $\alpha$. We define the generating functions $H^{[d]}$ for the Hurwitz numbers $h^{[d]}_k$ as
\begin{align*}
		H^{[d]}(z,p)=H^{[d]}(z,p_1,p_2,...)=\sum_{n \geq 1}\frac{1}{n!}\sum_{k=1}^{\infty}\sum_{\alpha \vdash n}h^{[d]}_k(\alpha)\frac{z^k}{k!}\Phi(\alpha) \text{ .}
\end{align*}
Finally, we give another proof of following theorem, which is first proved by Mironov et al. \cite{MR2864467}.
\begin{theorem*}\textbf{\ref{52}}
	\begin{align*}
	\frac{\partial e^{H^{[d]}}}{\partial z} = W([d]) e^{H^{[d]}}.
	\end{align*}
\end{theorem*}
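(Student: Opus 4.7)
The plan is to rewrite the identity as a statement about elements of the group algebra $\mathbb{C}S_n$, then apply Theorem~\ref{112} to convert multiplication by $K_{1^{n-d}d}$ into the action of $W([d])$, after which the identity drops out from comparing coefficients of $z^k/k!$.

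First I would establish that $e^{H^{[d]}}$ is the generating series for \emph{disconnected} Hurwitz numbers $h^{[d],\bullet}_k(\alpha)$, defined exactly as $h^{[d]}_k(\alpha)$ but with the transitivity condition dropped. This is the standard exponential formula: any tuple $(\tau_1,\ldots,\tau_k,\sigma^{-1})$ satisfying only conditions (1) and (2) decomposes uniquely as a disjoint union of transitive such tuples on the orbits of $\langle\tau_1,\ldots,\tau_k,\sigma\rangle$, the factor $1/n!$ normalizes the labeling of sheets, and the cycle-type weight $\Phi(\alpha)=p_\alpha$ is multiplicative across components. The outcome is
\begin{align*}
e^{H^{[d]}(z,p)} \;=\; \sum_{n\geq 0}\frac{1}{n!}\sum_{k\geq 0}\frac{z^k}{k!}\sum_{\alpha\vdash n} h^{[d],\bullet}_k(\alpha)\,\Phi(\alpha).
\end{align*}

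Next I would identify the inner sum with $\Phi$ of a power of the central element. For a fixed $\sigma$ of type $\alpha$, the number of tuples $(\tau_1,\ldots,\tau_k)$ with each $\tau_i$ of cycle type $(1^{n-d}d)$ and $\tau_1\cdots\tau_k=\sigma$ equals the coefficient of $\sigma$ in $K_{1^{n-d}d}^k$; by centrality this coefficient depends only on $\alpha$, and summing over the $|K_\alpha|$ permutations of that type gives
\begin{align*}
\sum_{\alpha\vdash n} h^{[d],\bullet}_k(\alpha)\,\Phi(\alpha)\;=\;\Phi\bigl(K_{1^{n-d}d}^k\bigr),
\end{align*}
so $e^{H^{[d]}}=\sum_n\frac{1}{n!}\sum_k\frac{z^k}{k!}\Phi(K_{1^{n-d}d}^k)$. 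Differentiating in $z$ and shifting $k\mapsto k+1$ produces an extra factor of $K_{1^{n-d}d}$ on the left inside $\Phi$. Theorem~\ref{112}, applied with $g=K_{1^{n-d}d}^k\in\mathbb{C}S_n$, then turns that factor into the operator $W([d])$ acting on $\Phi(K_{1^{n-d}d}^k)$. Since $W([d])$ is a differential operator on $\mathbb{C}[p_1,p_2,\ldots]$ independent of $n$, it commutes with the outer sums over $n$ and $k$ and can be pulled out, giving the claimed formula.

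The main obstacle is the first paragraph: justifying carefully that the exponential formula really applies to this generalized setting with the $\frac{1}{n!}\Phi(\alpha)$ weighting. One has to check compatibility of the $1/n!$ normalization with the decomposition into orbits (via the multinomial for partitioning the $n$ sheets among components), the multiplicativity of $\Phi$ over cycle-type unions, and the fact that condition (3) (transitivity) is exactly what distinguishes the connected from the disconnected count. Once this combinatorial step is pinned down, the remainder is an essentially formal manipulation powered by Theorem~\ref{112}.
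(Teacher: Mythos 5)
Your proposal is correct and takes essentially the same route as the paper: the paper's Remark~\ref{5001} asserts (and, like you, leaves to the reader) the exponential-formula step identifying $e^{H^{[d]}}$ with the generating series for tuples without the transitivity condition, its Lemma~\ref{5006} is precisely your peeling off of one factor of $K_{1^{n-d}d}$ --- which you package more compactly as the closed form $\Phi\bigl(K_{1^{n-d}d}^{k}\bigr)$ instead of the paper's explicit bijection adding a $d$-cycle to a tuple --- and Theorem~\ref{112} (applied with $g=K_{1^{n-d}d}^{k}$, as you note) converts that factor into $W([d])$. The differences are only in bookkeeping, not in the underlying argument.
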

\textbf{Acknowledgements}\\

This work has been written on spring 2015 at the University of Illinois at Urbana Champaign under the supervision of Maarten Bergvelt. It is a pleasure for me to thank my advisor Maarten Bergvelt for having introduced me to the problem and for his constant invaluable help. Also, I want to thank Rinat Kedem for her suggestions.

\section{W-Operator}

\begin{definition}\label{11}
	A variable matrix $X$ is an infinite matrix with variable $X_{ab}$ in the $(a,b)$-entry. Generally, $X:=(X_{ab})_{a,b\geq1}$ and all $X_{ab}$ are assumed to commute with each other.
\end{definition}

\begin{definition}\label{12}
	Define $p_{k}$ to be the trace of $X^{k}$, i.e., $p_{k}=tr(X^{k})$.
\end{definition}

Clearly, $p_{k}$ is a power series in $\mathbb{C}[[X_{ab}]]_{a,b \geq 1}$.

\begin{remark}\label{13}
	If $X$ is a special variable matrix with $X_{ab}=0$, when $a\neq b$, then $p_{k}$ is exactly the power symmetric function $\sum_{i=1}^{\infty} X^{k}_{ii}$.
\end{remark}

\begin{definition}\label{14}
	The operator matrix $D$ is an infinite matrix with $D_{ab}$ in the $(a,b)$-entry, where $D_{ab}=\sum\limits_{c=1}^{\infty} X_{ac} \frac{\partial}{\partial X_{bc}}$.
\end{definition}

\begin{definition}\label{15}
	The normal ordered product of $D_{ab}$ and $D_{cd}$ is
	\begin{align*}
		:D_{ab}D_{cd}:=\sum_{e_1,e_2 \geq 1}X_{ae_{1}}X_{ce_{2}} \frac{\partial}{\partial X_{be_{1}}} \frac{\partial}{\partial X_{de_{2}}}.	
	\end{align*}
	Similarly, the normal ordered product $:\prod\limits_{i=1}^{d}D_{a_i b_i}:$ is
	\begin{align*}
		:\prod\limits_{i=1}^{d}D_{a_i b_i}:	= \sum_{e_1,...,e_d \geq 1} (\prod\limits_{i=1}^{d}X_{a_i e_i}\prod\limits_{i=1}^{d}\frac{\partial}{\partial X_{b_i e_i}}).
	\end{align*}
\end{definition}

\begin{definition}\label{16}
	For any positive integer d, we define the W-operator $W([d])$ as
	\begin{align*}
		W([d]):=\frac{1}{d}:tr(D^{d}):.
	\end{align*}
	For any partition $\lambda=(\lambda_{1},\lambda_{2},...,\lambda_{k})$ of a positive integer d,
	\begin{align*}
		W([\lambda])=:W([\lambda_1])...W([\lambda_m]):.
	\end{align*}
\end{definition}

\begin{definition}\label{17}
	Let $(a_1,...,a_d)$ be an $d$-tuple with integers $a_i \geq 1$. We define the differential operator $D_{(a_1,...,a_d)}$ as
	\begin{align*}
		D_{(a_1,...,a_d)}=:\prod\limits_{i=1}^{d} D_{a_i a_{i+1}}:,
	\end{align*}
	where $a_{d+1}=a_1$. Similarly, we define the monomial $X_{(a_1,...,a_d)}$ as
\begin{align*}
X_{(a_1,...,a_d)}=\prod\limits_{i=1}^{d}X_{a_i a_{i+1}},
\end{align*}
where $a_{d+1}=a_1$.
\end{definition}
With this new notation $D_{(a_1,...,a_d)}$, we can write $W([d])$ in the following form
	\begin{align*}
		W([d])=\frac{1}{d} \sum_{a_1,...,a_d \geq 1} D_{(a_1,...,a_d)}.
	\end{align*}	

\begin{theorem}\label{18}
	$W([d])$ is a well-defined operator on $\mathbb{C}[p_{1},p_{2},...]$. In another words, $:tr(D^{d}):F(p)\in\mathbb{C}[p_{1},p_{2},...]$, for any $F(p) \in \mathbb{C}[p_{1},p_{2},...]$.
\end{theorem}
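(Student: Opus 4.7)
The plan is to prove well-definedness by unfolding the action of $W([d])$ on a general monomial $p_{k_1} p_{k_2} \cdots p_{k_m}$ and verifying that the result is a finite polynomial in the $p_l$'s.

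First, I would establish a basic computation: since $p_k = \sum_{i_1, \ldots, i_k} X_{i_1 i_2} X_{i_2 i_3} \cdots X_{i_k i_1}$, cyclic symmetry gives $\partial_{X_{bc}}(p_k) = k (X^{k-1})_{cb}$, and consequently $D_{ab}(p_k) = \sum_c X_{ac}\cdot k (X^{k-1})_{cb} = k(X^k)_{ab}$. Thus a single $D_{ab}$ converts the trace $p_k$ into a matrix entry. Because each $D_{ab}$ is a first-order derivation, iteratively applying the $d$ derivatives appearing in the unfolded form of $:tr(D^d):$ to $p_{k_1} \cdots p_{k_m}$ produces a finite sum of products of matrix entries of powers of $X$: the $d$ derivatives distribute among the $m$ cyclic factors $p_{k_s}$, and each $p_{k_s}$ hit by $d_s$ derivatives breaks into $d_s$ ``arcs'', i.e., matrix entries $(X^{l})_{rs}$ whose free indices are pinned to the derivative labels.

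Next, I would account for the outer multiplication by $X_{a_1 e_1} X_{a_2 e_2} \cdots X_{a_d e_d}$ together with the summations over the index variables $a_j, e_j$. The cyclic index pattern $a_1 \to a_2 \to \cdots \to a_d \to a_1$ in $:tr(D^d):$ prescribes exactly how the arcs must be reconnected, and the summation over indices forces each resulting set of arcs to close into cycles. Each closed cycle then yields a trace $p_l$, so the total output is a finite sum of products of $p_l$'s, i.e., an element of $\mathbb{C}[p_1, p_2, \ldots]$.

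The main obstacle is the combinatorial bookkeeping: tracking which arcs rejoin to form which new cycles and with which coefficients. This is the cut-and-join mechanism generalized from transpositions (the case $d=2$, which recovers $\Delta$) to arbitrary $d$-cycles. A more conceptual alternative is to observe that $D = X\cdot(\partial_X)^T$ transforms as $D \mapsto gDg^{-1}$ under the conjugation $X \mapsto gXg^{-1}$, so both $tr(D^d)$ and its normal-ordered version $:tr(D^d):$ commute with conjugation; hence $W([d])$ preserves the conjugation-invariant subalgebra, which coincides with $\mathbb{C}[p_1, p_2, \ldots]$ by the first fundamental theorem of invariant theory. The explicit combinatorial approach is preferable here because it also yields the precise formula needed in the subsequent sections.
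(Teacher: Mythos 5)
Your argument is sound, but note that the paper does not actually prove Theorem \ref{18} in the text: it simply cites Theorem 3.15 of the companion paper \cite{Sun1}, where the statement is established via the permutation/quiver formalism that also underlies Section 3 here. So your proposal supplies a genuinely self-contained alternative, and its mechanism is correct: $D_{ab}(p_k)=k(X^k)_{ab}$ is right, Leibniz distributes the $d$ derivatives of $:tr(D^d):$ among the trace factors, each derivative $\partial/\partial X_{a_{i+1}e_i}$ cuts a cyclic word into arcs $(X^l)_{e_i a_{j+1}}$, the prefactors $X_{a_i e_i}$ summed over $e_i$ extend these to $(X^{l+1})_{a_i a_{j+1}}$, and summing over $a_1,\dots,a_d$ contracts the arcs along the cyclic pattern into closed traces, so the output is a finite sum of monomials in the $p_l$ — indeed this reconnection permutation is precisely left multiplication by a $d$-cycle, so your computation simultaneously foreshadows Theorem \ref{112}, which is what the quiver argument of Section 3 buys in the paper's own development. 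One caveat on your ``conceptual alternative'': the first fundamental theorem is a finite-$N$ statement, and for finite $N$ the traces $p_k$ with $k>N$ are algebraically dependent on $p_1,\dots,p_N$, so invariance under conjugation shows only that the image lies in the invariant ring of each truncation; to conclude that $W([d])$ is well defined on the \emph{free} polynomial ring $\mathbb{C}[p_1,p_2,\dots]$ (where the $p_k$ of the infinite variable matrix are algebraically independent) you would still need a stabilization-in-$N$ argument. Your explicit combinatorial route avoids this issue entirely, which is a further reason, beyond the one you give, to prefer it.
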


\begin{proof}
	This theorem is proved in \cite{Sun1}, Theorem 3.15.
\end{proof}

\begin{definition}\label{19}
	Define a map
	\begin{align*}
	\Phi: \mathbb{C}S_n \rightarrow \mathbb{C}[p_1,p_2,...]
	\end{align*}
	such that for each $\sigma \in S_n$, we have
	\begin{align*}
		\Phi(\sigma)=p_{\alpha}=p_{\alpha_1}...p_{\alpha_m},
	\end{align*}
	where $\alpha=(\alpha_1,...,\alpha_m)$ is the partition (or type) corresponding to $\sigma$.
\end{definition}

\begin{definition}\label{110}
	Let $\alpha$ be a partition of a positive integer $n$. We define the element $K_\alpha$ in the group ring $\mathbb{C}S_n$ as
	\begin{align*}
		K_\alpha = \sum_{\sigma \in S_n, \atop \sigma \text{ is of type } \alpha}\sigma.
	\end{align*}
\end{definition}

$K_\alpha$ is in the center of in $\mathbb{C}S_n$. For example, $\Phi(K_\alpha)=|K_{\alpha}|p{_\alpha}$, where $|K_\alpha|$ is the number of all $\sigma \in S_n$ of type $\alpha$.

\begin{notation}\label{111}
	Given a partition $\alpha=(\alpha_1,...,\alpha_m)$ of a positive integer $n$, we can write it as
	\begin{align*}
		\alpha=1^{k_1}2^{k_2}...s^{k_s}
	\end{align*}	
	where $k_i$ is the number of times the integer $i$ appears in the partition $\lambda$. For example, if $\lambda=1^{n-d}d$, then $K_{1^{n-d}d}$ is a central element in $\mathbb{C}S_n$, which is the sum of all $d$-cycles in $S_n$.
\end{notation}

\begin{theorem}\label{112}
	For any $g \in \mathbb{C}S_n$, we have
	\begin{align*}
		\Phi(K_{1^{n-d}d}g)=W([d]) \Phi(g).
	\end{align*}
\end{theorem}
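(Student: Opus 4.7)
The plan is to reduce by linearity to $g = \sigma$ a single permutation of cycle type $\beta$, and then to verify the identity term-by-term after expanding both sides in the matrix variables $X_{ab}$. The left side becomes $\sum_\tau \Phi(\tau\sigma)$, where $\tau$ ranges over all $d$-cycles in $S_n$. The key expansion I rely on is that $p_k = \sum_{a_1,\ldots,a_k} X_{a_1 a_2}\cdots X_{a_k a_1}$, and hence, by applying this one cycle of $\sigma$ at a time,
\begin{align*}
\Phi(\sigma) = p_\beta = \sum_{\phi} M_\sigma^\phi, \qquad M_\sigma^\phi := \prod_{x=1}^n X_{\phi(x),\phi(\sigma(x))},
\end{align*}
where $\phi$ ranges over all functions $\{1,\ldots,n\} \to \mathbb{Z}_{\geq 1}$.

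Next, I would compute the action of $D_{(c_1,\ldots,c_d)}$ on a single monomial $M_\sigma^\phi$. The normal-ordered derivative $\prod_i \partial/\partial X_{c_{i+1} e_i}$ acts by summing over injections $f : \{1,\ldots,d\} \hookrightarrow \{1,\ldots,n\}$ that select the $d$ factors of $M_\sigma^\phi$ to consume, subject to the matching conditions $\phi(f(i)) = c_{i+1}$ and $\phi(\sigma(f(i))) = e_i$. These conditions determine the $c_i$ and $e_i$ from $f$ and $\phi$ (cyclically, so $c_1 = \phi(f(d))$ and $c_j = \phi(f(j-1))$ for $j \geq 2$), so the outer sums over $c$ and $e$ in $W([d])$ collapse to a single surviving term per $f$. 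After cancellation, the result is
\begin{align*}
W([d])\,M_\sigma^\phi = \frac{1}{d}\sum_{f} \Bigl(\prod_{x \notin f(\{1,\ldots,d\})} X_{\phi(x),\phi(\sigma(x))}\Bigr) \prod_{i=1}^d X_{\phi(f(i-1)),\,\phi(\sigma(f(i)))},
\end{align*}
with the cyclic convention $f(0) = f(d)$.

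The final step is to identify this monomial as $M_{\tau\sigma}^\phi$ for a suitable $d$-cycle $\tau$. Setting $x_i := \sigma(f(i))$ and $\tau := (x_1\,x_2\,\cdots\,x_d)$, a direct check shows that for $x = f(j)$ the newly added factor equals $X_{\phi(x),\phi((\tau\sigma)(x))}$ (since $(\tau\sigma)(f(j)) = \tau(x_j) = x_{j+1} = \sigma(f(j+1))$), and for $x \notin f(\{1,\ldots,d\})$ the untouched factor equals $X_{\phi(x),\phi((\tau\sigma)(x))}$ (since $\sigma(x)$ lies outside $\{x_1,\ldots,x_d\}$). Each $d$-cycle $\tau$ is obtained from exactly $d$ distinct injections $f$, namely those coming from the $d$ cyclic rotations of the writing $(x_1\,\cdots\,x_d)$, which precisely cancels the prefactor $1/d$. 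Summing over $\phi$ then converts $\sum_\tau M_{\tau\sigma}^\phi$ into $\sum_\tau p_{\mathrm{type}(\tau\sigma)} = \sum_\tau \Phi(\tau\sigma) = \Phi(K_{1^{n-d}d}\sigma)$.

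The main obstacle I expect is the careful cyclic bookkeeping: ensuring that the index $c_i$ produced by the derivative correctly matches $\phi(f(i-1))$ rather than $\phi(f(i))$ under the cyclic convention $c_{d+1} = c_1$, checking that the injection $f$ genuinely assigns distinct factors of $M_\sigma^\phi$ (justified because repeated factors with identical $(\phi(x),\phi(\sigma(x)))$-labels yield matching combinatorial and algebraic multiplicities), and verifying cleanly the rotational overcount that absorbs the $1/d$ in $W([d])$.
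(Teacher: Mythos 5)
Your proposal is correct, and it proves the identity the paper proves, but it organizes the hard part of the argument genuinely differently. The skeleton is shared: like the paper, you expand $\Phi(g)$ into monomials indexed by assignments of positive integers to $\{1,\dots,n\}$ (your $\phi$ is the paper's tuple $(a_1,\dots,a_n)$), you recognize that a surviving term of $D_{(c_1,\dots,c_d)}$ deletes $d$ factors of the monomial and recreates them so that the result is precisely the monomial of $\tau\sigma$ for a $d$-cycle $\tau$ (this is the paper's Remark \ref{29} and Lemma \ref{210}, phrased there in the language of quivers), and you absorb the prefactor $1/d$ by the $d$ cyclic rotations of the tuple. Where you diverge is in the treatment of non-injective $\phi$, which is where the paper spends most of its effort: the paper first proves the generic identity $\bar{W}([d])M_{\bar{g}}=\sum_{\bar{\sigma}}M_{\bar{\sigma}\bar{g}}$ in formal variables $t_i$ and then evaluates, which forces a delicate case analysis (its Cases 1--3: repeated indices with all factors distinct, repeated factors producing squares and second derivatives, and combinations explicitly left to the reader) to verify that the number of operator--tuple incidences remains $\binom{n}{d}d!$. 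You instead parametrize all contributions at once by injections $f\colon\{1,\dots,d\}\hookrightarrow\{1,\dots,n\}$ matching derivative slots to factor slots, so that the algebraic multiplicities arising from coincident variables (e.g.\ $\partial^{2}_{X_{uv}}X_{uv}^{2}=2$) automatically agree with the combinatorial count of injections; all of the paper's degenerate cases are then handled uniformly, with nothing deferred. That is a real simplification, and your identification $\tau=(\sigma(f(1))\,\cdots\,\sigma(f(d)))$, together with the check that $\sigma(x)\notin\sigma(\mathrm{im}(f))$ for $x\notin\mathrm{im}(f)$, is exactly right. To make the write-up airtight you should isolate and prove the matching lemma you flag at the end --- for a monomial $M=\prod_{x=1}^{n}X_{w_x}$ one has $\bigl(\prod_{i=1}^{d}\partial/\partial X_{u_i}\bigr)M=\sum_{f}\prod_{x\notin\mathrm{im}(f)}X_{w_x}$, the sum over injections $f$ with $w_{f(i)}=u_i$, provable by induction on $d$ --- and note, as the paper does, that only finitely many operators $D_{(c_1,\dots,c_d)}$ act non-trivially on a fixed monomial, so the interchanges of the infinite sums over $(c,e)$, $f$, and $\phi$ are legitimate at the level of formal series.
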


We will prove this theorem in the next section.

\section{Proof of Theorem \ref{112}}
\begin{definition}\label{21}
	A quiver $Q=(V,A,s,t)$ is a quadruple, where $V$ is the set of vertices, $A$ is the set of arrows, $s$ and $t$ are two maps $A \to Q$. If $a \in A$, $s(a)$ is the source of this arrow and $t(a)$ is the target of the arrow. We assume that $V$ and $A$ are finite sets.
	
	If $B$ is a subset of $A$, $V_{B}=\{s(a),t(a),a \in B\}$, then we call $(V_{B},B,s',t')$ subquiver of $Q$, where $s'=s|_B$, $t'=t|_B$.
	
	A quiver $Q=(V,A,s,t)$ is connected if the underlying undirected graph of $Q$ is connected.
	
	A connected quiver $Q=(V,A,s,t)$ is a loop, if for any vertex $v \in V$, there is a unique arrow $a \in A$ such that $s(a)=v$ and a unique arrow $b \in A$ such that $t(b)=v$.
\end{definition}

\begin{definition}\label{22}
	Denote by $\mathbb{FQ}$ the set of all quivers $(V,A,s,t)$ with finite vertex set $\{1,...,n\}$ for some positive integer $n$ and finite arrow set $A$. Denote by $\mathbb{M}$ the set of all monomials with variables $X_{ij}$, $1 \leq i,j < \infty$.
\end{definition}

\begin{remark}\label{23}
	Let $Q=(V,A,s,t) \in \mathbb{FQ}$. We define the map $\beta : \mathbb{FQ} \rightarrow \mathbb{M}$ as $\beta(Q)=M_Q$, where $M_Q= \prod_{a \in A} X_{s(a)t(a)}$. Also, given any monomial
\begin{align*}
M=\prod_{k=1}^{l}=X_{i_k j_k},
\end{align*}
we can define the corresponding quiver $Q_M$ as $Q_M=(V_M,A_M,s,t)$, where $V_M=\{1,...,n\},n=max\{i_k,j_k, 1 \leq k \leq n \}$ and $A_M=\{a_k:i_k \rightarrow j_k, 1 \leq k \leq n\}$.
\end{remark}

\begin{definition}\label{24}
	Let $\Phi_n: S_n \rightarrow \mathbb{FQ}$ be the map such that $\Phi_n(\alpha)=Q_\alpha$, where $Q_\alpha=\{ V_\alpha=\{1,...,n\},A_\alpha=\{i \rightarrow \alpha(i) \},s,t \}$, where $s,t$ are the obvious source and target maps. The image of $\Phi_n$ consist of unions of disjoint loops which represent elements of $S_n$. Denote by $M_\alpha=\beta(Q_\alpha)$ the monomial corresponding to $\alpha$.
\end{definition}

\begin{example}\label{25}
Let $\alpha=(123)(45)(6) \in S_6$, then $Q_\alpha=\Phi_n(\alpha)$ is
\begin{align*}
		& 1 \rightarrow 2 \rightarrow 3 \rightarrow 1\\
		& 4 \rightarrow 5 \rightarrow 4\\
		& 6 \rightarrow 6.
\end{align*}
\end{example}

\begin{construction}\label{26}
	Given $\alpha \in S_n$, we have $Q_\alpha=\Phi_n(\alpha)$ the quiver corresponding to $\alpha$. Given two vertices $a_1,a_2 \in Q_\alpha$, first we pick the unique arrow $a_2 \rightarrow b$ with source $a_2$ in $Q_\alpha$, then we use another arrow $a_1 \rightarrow b$ substituting $a_2 \rightarrow b$. So we get a new quiver denoted by $(\bar{D}_{a_1 a_2})Q_{\alpha}$. More generally, if $a_1,...,a_d$ are distinct vertices (or integers) of $Q_\alpha$, we replace the arrows $a_i \rightarrow b_{i}$ with $a_{i-1} \rightarrow b_i$ simultaneously, $2 \leq i \leq k+1$, $a_{k+1}=a_1$. Denote by $(\prod_{i=1}^{d}\bar{D}_{a_{i}a_{i+1}})Q_\alpha$ the new quiver. We introduce another notation similar to Definition \ref{17},
	\begin{align*}
		\bar{D}_{(a_1,...a_d)}=\prod_{i=1}^{d}\bar{D}_{a_i a_{i+1}},
	\end{align*}
where $(a_1,...,a_d)$ is an $n$-tuple of positive integers and $a_{d+1}=a_1$.
\end{construction}

\begin{remark}\label{27}
	Given $d$-tuple of positive integers $(a_1,...,a_d)$, then $\bar{D}_{(a_1,...,a_d)}Q_{\alpha}$ means we do all the operations simultaneously instead of "composition of operations". For example, let $\alpha=(1 2 3)$ and $\bar{D}_{(1,2,3)}=\bar{D}_{12}\bar{D}_{23}\bar{D}_{31}$. If we do the operations simultaneously, the new quiver is
	\begin{align*}
		1 \rightarrow 3 \rightarrow 2 \rightarrow 1.
	\end{align*}
	But, if we do it as compositions, $\bar{D}_{31}Q_\alpha$ is
	\begin{align*}
		3 \rightarrow 2 ,\quad 2 \rightarrow 3,\quad  3 \rightarrow 1.
	\end{align*}
	This quiver has two arrows with source $3$. In this case, $\bar{D}_{23}$ cannot act on this quiver by Construction \ref{26}. This is the reason why we want to do all the operations simultaneously, otherwise, we don't know which arrow to replace.
\end{remark}

The new quiver $\bar{D}_{a_2 a_1} Q_\alpha$ may not be of the form $\Phi_n(\beta)$, i.e not correspond to a well defined element $\beta$ in the permutation group $S_n$ under the map $\Phi_n$. But, we have the following lemma.

\begin{lemma}\label{28}
	Let $\alpha \in S_n$ and $Q_\alpha = \Phi_n(\alpha)$ is the corresponding quiver. Given $d$ distinct vertexes (or positive integers) $a_1,...,a_d$, then $\bar{D}_{(a_1,...,a_d)}Q_{\alpha}$ corresponds to an element in $S_n$.
\end{lemma}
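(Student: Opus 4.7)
The plan is to exhibit an explicit permutation $\beta \in S_n$ whose quiver $\Phi_n(\beta)$ equals $\bar{D}_{(a_1,\ldots,a_d)}Q_\alpha$. The characterizing feature of quivers in the image of $\Phi_n$ is that every vertex has exactly one outgoing and one incoming arrow (i.e.\ the quiver is a disjoint union of directed loops), so it suffices to check that this property is preserved.

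First I would unwind the definition. The operator $\bar{D}_{a_i a_{i+1}}$ acts on $Q_\alpha$ by replacing the unique outgoing arrow $a_{i+1} \to \alpha(a_{i+1})$ by $a_i \to \alpha(a_{i+1})$; performed simultaneously for $i = 1,\ldots,d$ (with $a_{d+1} = a_1$), this removes the $d$ arrows $a_j \to \alpha(a_j)$ ($j = 1,\ldots,d$) and inserts the $d$ arrows $a_j \to \alpha(a_{j+1})$, while leaving every other arrow of $Q_\alpha$ untouched.

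Next I would check the two vertex-degree conditions. For outgoing arrows: a vertex $v \notin \{a_1,\ldots,a_d\}$ keeps its unique arrow $v \to \alpha(v)$, while each $a_j$ loses its old outgoing arrow and acquires exactly one new outgoing arrow $a_j \to \alpha(a_{j+1})$. For incoming arrows: the modified arrows have the same multiset of targets before and after, namely $\{\alpha(a_1),\ldots,\alpha(a_d)\}$, and these targets are pairwise distinct because $\alpha$ is a bijection and the $a_i$ are distinct. Hence each such target has its one incoming arrow swapped for one new incoming arrow, and every other vertex is unaffected.

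Finally I would conclude that $\bar{D}_{(a_1,\ldots,a_d)}Q_\alpha = \Phi_n(\beta)$ for the permutation $\beta \in S_n$ defined by
\begin{align*}
\beta(a_j) &= \alpha(a_{j+1}) \quad \text{for } j = 1,\ldots,d \text{ (indices mod } d\text{)}, \\
\beta(v) &= \alpha(v) \quad \text{for } v \notin \{a_1,\ldots,a_d\}.
\end{align*}
There is no substantive obstacle here; the only thing one has to be careful about is the bookkeeping convention, namely that $\bar{D}_{a_i a_{i+1}}$ modifies the arrow out of $a_{i+1}$ (not $a_i$), together with the cyclic identifications $a_{d+1} = a_1$ and $a_0 = a_d$. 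Once that is pinned down, the verification that the new quiver is a disjoint union of loops is immediate.
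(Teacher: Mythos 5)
Your proposal is correct and takes essentially the same approach as the paper: the paper's proof is exactly your degree count, observing that the operation changes only the sources of the chosen arrows (cyclically permuting them among $a_1,\ldots,a_d$) while fixing all targets, so each integer still appears exactly once as a source and once as a target. Your explicit identification $\beta(a_j)=\alpha(a_{j+1})$, $\beta(v)=\alpha(v)$ otherwise, is the content of the paper's Remark \ref{29}, agreeing with the formula $\alpha'=(a_1\,a_2\,\ldots\,a_d)\alpha$ stated there once one fixes the order-of-composition convention (in the standard right-to-left convention your $\beta$ is $\alpha\cdot(a_1\,\ldots\,a_d)$, which is conjugate-equivalently $(\alpha(a_1)\,\ldots\,\alpha(a_d))\,\alpha$).
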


\begin{proof}
	In the construction, this procedure only changes the source of each arrow and fixes the target. Therefore, we pick $d$ arrows such that their sources are $a_1,...,a_d$ respectively. By the construction, substitute the source $a_i$ by $a_{i+1}$, where $i \leq d-1$ and $a_1$ by $a_d$, and get a new quiver $(\bar{D}_{(a_1,...,a_d)})Q_{\alpha}$. Clearly, this quiver still represents for an element in $S_n$, because each integer $k$ ($k \leq n$) appears once as a target and once as a source.	
\end{proof}

\begin{remark}\label{29}
	From the proof of the lemma, we have $Q_{\alpha'}=(\bar{D}_{(a_1,...,a_d)})Q_{\alpha}$, where $\alpha'=(a_1 \text{ }a_{2} \text{ }... \text{ }a_d) \alpha$.
\end{remark}

Now, consider the monomial $\beta(\Phi_n((12...n)))=X_{1 2}X_{2 3}...X_{n 1}$ which is a term in $tr(X^n)$. We use the permutation $(1 2 ...n)$ to represent this monomial or the quiver
\begin{align*}
	1 \rightarrow 2 \rightarrow ... \rightarrow n \rightarrow 1 \text{ .}
\end{align*}
We use $D_{21}$ (refer to Definition \ref{14}) acting on this term, then we get
\begin{align*}
	D_{21}X_{1 2}X_{2 3}...X_{n 1}=X_{2 2}X_{2 3}...X_{n 1}.
\end{align*}
The new term $X_{2 2}X_{2 3}...X_{n 1}$ can be represented by a quiver
\begin{align*}
	& 2 \rightarrow 2\\
	& 2 \rightarrow 3 \rightarrow ... \rightarrow n \rightarrow 1 \text{ .}
\end{align*}
In this way, if we use quivers to represent the monomials, then $D_{a_1 a_2}$ acting on monomials is the same as $\bar{D}_{a_1 a_2}$ acting on the corresponding quivers. Hence, if $D_{a_1 a_2}...D_{a_d a_1} X$ is a nonzero monomial, then it can be represented by a permutation by Remark \ref{27}. With the discussion above, we have the following lemma.

\begin{lemma}\label{210}
	Let $\alpha \in S_n$. $Q_\alpha$ is the corresponding quiver and $M_\alpha$ is the corresponding monomial. We have $\beta(\bar{D}Q_\alpha)=DM_\alpha$, where $D=D_{(a_1,...,a_d)}$ and $\bar{D}=\bar{D}_{(a_1,...,a_d)}$, where $(a_1,...,a_d)$ is an $d$-tuple of positive integers.
\end{lemma}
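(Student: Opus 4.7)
The plan is to expand both sides of the claimed identity explicitly, using Definitions \ref{15} and \ref{17} on the left and Construction \ref{26} on the right, and verify that they produce the same monomial.

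First I would compute $D M_\alpha$ directly. By Definition \ref{15},
\begin{align*}
D_{(a_1,\dots,a_d)} M_\alpha = \sum_{e_1,\dots,e_d \geq 1} \Bigl(\prod_{i=1}^d X_{a_i e_i}\Bigr) \Bigl(\prod_{i=1}^d \frac{\partial}{\partial X_{a_{i+1} e_i}}\Bigr) M_\alpha,
\end{align*}
with the convention $a_{d+1}=a_1$. Since $M_\alpha = \prod_{k=1}^n X_{k,\alpha(k)}$ is squarefree with each factor having a distinct first index, the mixed partial is nonzero exactly when every pair $(a_{i+1},e_i)$ matches one of the pairs $(k,\alpha(k))$; as the $a_i$ are distinct, this forces $e_i = \alpha(a_{i+1})$ uniquely for each $i$, and the derivative removes precisely the factors $X_{a_{i+1},\alpha(a_{i+1})}$ for $i=1,\dots,d$. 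Multiplying by the remaining prefactor yields
\begin{align*}
D_{(a_1,\dots,a_d)} M_\alpha = \Bigl(\prod_{i=1}^d X_{a_i,\alpha(a_{i+1})}\Bigr)\cdot\prod_{k\notin\{a_1,\dots,a_d\}} X_{k,\alpha(k)}.
\end{align*}

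Next I would analyze $\beta(\bar{D} Q_\alpha)$ from Construction \ref{26}. The arrows of $Q_\alpha$ are exactly $k \to \alpha(k)$ for $1\le k\le n$, so the unique arrow with source $a_{i+1}$ has target $b_{i+1}=\alpha(a_{i+1})$. Applying $\bar{D}_{(a_1,\dots,a_d)}$ simultaneously replaces each arrow $a_{i+1}\to\alpha(a_{i+1})$ by $a_i\to\alpha(a_{i+1})$ (with $a_{d+1}=a_1$), while leaving the arrows $k\to\alpha(k)$ for $k\notin\{a_1,\dots,a_d\}$ unchanged. Since $\beta$ sends a quiver to the product of $X_{s(a)t(a)}$ over its arrows, this produces exactly the same monomial as above.

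Comparing the two computations gives $\beta(\bar{D}Q_\alpha) = D M_\alpha$, proving the lemma. The main bookkeeping point, and really the only thing that could trip one up, is the index shift: in Definition \ref{15} the differentiated variable is $X_{a_{i+1}e_i}$ while the multiplied variable is $X_{a_i e_i}$, and this shift from $a_{i+1}$ to $a_i$ in the first index is exactly the source-replacement rule of Construction \ref{26}. Once one aligns these two conventions, the two calculations produce identical monomials with no further case analysis required, since distinctness of the $a_i$ makes the inner sum over $(e_1,\dots,e_d)$ collapse to a single nonzero term.
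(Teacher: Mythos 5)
Your proof is correct and takes essentially the same route as the paper, which establishes the lemma informally from the observation (illustrated on the example $D_{21}X_{12}X_{23}\cdots X_{n1}$) that a single $D_{ab}$ acts on monomials exactly as the source-replacement $\bar{D}_{ab}$ of Construction \ref{26} acts on quivers; you simply carry out the normal-ordered expansion explicitly in full generality, making the sketch rigorous. Your standing assumption that the $a_i$ are distinct vertices of $Q_\alpha$ is the intended reading, since Construction \ref{26} only defines $\bar{D}_{(a_1,\dots,a_d)}$ under that hypothesis --- though note that distinctness is not what forces $e_i=\alpha(a_{i+1})$ (the first index alone does that, as each first index determines a unique factor of $M_\alpha$); rather it guarantees that no variable of the squarefree monomial $M_\alpha$ is differentiated twice, so that the single surviving term is nonzero.
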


\begin{definition}\label{211}
	Let $X$ be a monomial in $\mathbb{C}[X_{ij}]_{i,j \geq 1}$ and let $D$ be a (formal) differential operator. If $DX \neq 0$, then we say $D$ is a non-trivial operator (with respect to $X$). In this section, we prefer to consider the differential operator $D=D_{(a_1,...,a_n)}$.
\end{definition}

\begin{definition}\label{212}
	Let $S=\{t_i, i \geq 1\}$ be a set of variables, define $\mathbb{M}_t$ is the set of all monomials with variables $X_{t_i t_j}, i,j \geq 1$. Given an infinite sequence of positive integers $a=(a_1,a_2,...)$, define the evaluation map $ev_a: \mathbb{M}_t \rightarrow \mathbb{M}$,
	\begin{align*}
		ev_a(X_{t_i t_j})=X_{a_i a_j}.
	\end{align*}
	If $M_t$ is a monomial in $\mathbb{M}_t$, we define $M_t(a_1,a_n...)=ev_a(M_t)$.
	
	Similar to Definition \ref{17}, we introduce the following notation,
	\begin{align*}
		& X_{(t_1,...,t_n)}=\left(\prod_{i=1}^{n-1}X_{t_i t_{i+1}}\right)X_{t_n t_1},\\
		& D_{(t_1,...,t_n)}=:\left(\prod_{i=1}^{n-1}D_{t_i t_{i+1}}\right)D_{t_n t_1}:.
	\end{align*}
	Finally, we define $\bar{W}([d])= \frac{1}{d} :Tr((D_{t_i t_j})_{i,j \geq 1})^d:$	
\end{definition}

\begin{theorem*}\emph{\textbf{\ref{112}}}
	For any $g \in \mathbb{C}S_n$,
	\begin{align*}
		\Phi(K_{1^{n-d}d}g)=W([d]) \Phi(g).
	\end{align*}
\end{theorem*}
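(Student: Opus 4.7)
The plan is as follows. By linearity of both $\Phi$ and $W([d])$, it suffices to prove the identity for $g = \sigma$ a single permutation in $S_n$; set $\mu = \mathrm{type}(\sigma)$. The strategy is to expand $\Phi(\sigma) = p_\mu$ as a sum of labeled monomials and then use the dictionary of Lemma~\ref{210} between the action of $D_{(a_1,\ldots,a_d)}$ on monomials and the action of $\bar D_{(a_1,\ldots,a_d)}$ on quivers, combined with Remark~\ref{29}, which identifies the modified quiver with the quiver of $\sigma$ multiplied by the $d$-cycle $(a_1\,a_2\,\cdots\,a_d)$.

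Concretely, I would write $p_\mu = \sum_\phi M_\phi$, where $\phi$ ranges over all maps $V(Q_\sigma) \to \mathbb{Z}_{\geq 1}$ and $M_\phi = \prod_{a \in A(Q_\sigma)} X_{\phi(s(a))\phi(t(a))}$, and prove the pointwise identity
\[
W([d])\, M_\phi \;=\; \sum_{\tau \text{ a } d\text{-cycle in } S_n} M^{Q_{\tau\sigma}}_\phi.
\]
Expanding $W([d]) = \tfrac{1}{d}\sum_{(a_1,\ldots,a_d)} D_{(a_1,\ldots,a_d)}$ and applying each summand to $M_\phi$, the nonzero contributions are indexed by ordered tuples $(v_1,\ldots,v_d)$ of source vertices of $Q_\sigma$ with $\phi(v_i)=a_i$; by Lemma~\ref{210} and Remark~\ref{29}, the resulting monomial is $M^{Q_{\tau\sigma}}_\phi$ with $\tau = (v_1\,v_2\,\cdots\,v_d)$. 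The prefactor $\tfrac{1}{d}$ exactly cancels the $d$-fold redundancy by which the same $d$-cycle $\tau$ is presented as $d$ different ordered tuples (its cyclic rotations). Summing over $\phi$ and swapping the order of summation then gives $W([d])\Phi(\sigma) = \sum_\tau \Phi(\tau\sigma) = \Phi(K_{1^{n-d}d}\sigma)$, where the last equality uses centrality of $K_{1^{n-d}d}$ to absorb whichever side of the multiplication convention is implicit in Remark~\ref{29}.

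The main obstacle is handling non-injective labelings $\phi$: when several vertices of $Q_\sigma$ share a label, the tuple $(a_1,\ldots,a_d)$ no longer determines $(v_1,\ldots,v_d)$ uniquely, and one must check that the partial derivatives in $D_{(a_1,\ldots,a_d)}$ generate precisely one separate term per admissible choice of source arrows. This is exactly what a normal-ordered derivative accomplishes—$\partial/\partial X_{ab}$ applied to $X_{ab}^k$ yields $k\,X_{ab}^{k-1}$, producing the correct multiplicity equal to the number of arrows with the required source label—but the cleanest bookkeeping uses the universal framework of Definition~\ref{212}: one first applies $\bar W([d])$ to the universal monomial built from $X_{(t_1,\ldots,t_{\mu_1})}\cdots X_{(\ldots,t_n)}$ in formal labels $t_i$ (where the sum over distinct vertex tuples is already manifest), and then applies the evaluation map $ev_a$ summed over all infinite sequences $a$ to recover $W([d])p_\mu$. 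This decouples the quiver-modification combinatorics from the labeling combinatorics and makes the term-by-term identification with $\sum_\tau \Phi(\tau\sigma)$ transparent.
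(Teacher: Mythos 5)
Your proposal follows essentially the same route as the paper's proof: reduce by linearity to a single permutation, use the quiver dictionary of Lemma \ref{210} and Remark \ref{29} to compute $\bar{W}([d])$ on the universal monomial in the formal variables $t_i$ of Definition \ref{212} (with the prefactor $\frac{1}{d}$ cancelling the $d$-fold cyclic redundancy), and then transfer the identity through the evaluation map $ev_a$ to labelings with repeated values. The only divergence is in that last step, where the paper verifies Equation \eqref{eq31} by an explicit three-case count of $d$-tuples in the variables $t_i$ (summing to ${n \choose d}d!$), while you compress the same content into the Leibniz-rule observation that $\frac{\partial}{\partial X_{ab}}$ acting on $X_{ab}^{k}$ produces multiplicity $k$, matching the number of admissible source arrows --- correct, but stated at sketch level where the paper carries out the bookkeeping in full.
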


\begin{proof}
Let $g \in S_n$ and we can write it in disjoint cycles
\begin{align*}
g=(c_1 \text{ }...\text{ }c_{\lambda_1})(c_{\lambda_1+1} \text{ }...\text{ }c_{\lambda_1+\lambda_2})\text{ }...\text{ }(c_{n-\lambda_m+1}\text{ }...\text{ }c_n),
\end{align*}
where $\lambda=(\lambda_1,...,\lambda_m)$ is the partition corresponding to $g$.

$W([d])$ is an infinite sum of operators $D_{(b_1,...,b_d)}$, $b_i$ are positive integers, (see Definition \ref{17}) and $\Phi(g)=\prod_{i=1}^{m}p_{\lambda_i}$ is an infinite sum of monomials in the form
	\begin{align*}
		M(a_1,...,a_n)=X_{(a_1,...,a_{\lambda_1})}...X_{(a_{n-\lambda_m+1},...,a_{n})}.
	\end{align*}
	Given any monomial $M(a_1,...,a_n)$, there are only finitely many operators $D_{(b_1,...,b_d)}$ in $W([d])$ such that $D_{(b_1,...,b_d)}M(a_1,...,a_n)\neq 0$. Hence, $W([d]) M(a_1,...,a_n)$ is a finite sum of monomials. To analyze these monomials, we first consider the generic case $M_t$. Then, we go back to $M(a_1,...,a_n)$ as the evaluation of $M_t$ at some $n$-tuple of integers,
	\begin{align*}
		M(a_1,...,a_n)=ev_a(M_t),
	\end{align*}
where $a=(a_1,...,a_n)$ is an $n$-tuple of positive integers.

We replace $W([d])$ by $\bar{W}([d])$ (see Definition \ref{212}) and $g$ by $\bar{g}$, where
\begin{align*}
\bar{g}=(t_1 \text{ }...\text{ }t_{\lambda_1})(t_{\lambda_1+1} \text{ }...\text{ }t_{\lambda_1+\lambda_2})\text{ }...\text{ }(t_{n-\lambda_m+1}\text{ }...\text{ }t_n).
\end{align*}
We consider a special case $M_t=X_{(t_1,...,t_{\lambda_1})}...X_{(t_{n-\lambda_m+1},...,t_{n})}$. In this case, we prefer to use the notation $M_{\bar{g}}$ for $M_t$. Now we will calculate $\bar{W}([d])M_{\bar{g}}$. By Remark \ref{29} and Lemma \ref{210}, let $i_1,...,i_d$ be distinct integers in $\{1,...,n\}$, we have
	\begin{align*}
		D_{(t_{i_1}, t_{i_2},...,t_{i_d})} M_{\bar{g}} = M_{\bar{\sigma} \bar{g}},
	\end{align*}
where $\bar{\sigma}$ is the $d$-cycle $(t_{i_1}\text{ }...\text{ }t_{i_d}) \in \bar{S}_n = Aut\{t_1,...,t_n\}$. Since $D_{(t_{i_1},...,t_{i_d})} M_{\bar{g}}$ is nonzero if and only if ${i_j} \in \{1,...,n\}, 1 \leq j \leq d$, so we have
	\begin{align*}
		\sum_{(i_1,...,i_d), \atop i_j \in \{1,...,n\} \text{ and distinct}}D_{(t_{i_1},..., t_{i_d})}M_{\bar{g}}=d \sum_{\bar{\sigma} \text{ } d \text{-cycle in }\bar{S}_{n}} M_{\bar{\sigma} \bar{g}}.
	\end{align*}
	Here we understand there are $d$ $d$-tuples $(i_1,...,i_d)$ giving rise to the same $d$-cycle. Hence, we have a coefficient $d$ at the right side of the above equation. Then, we have the following formula
	\begin{align*}
	\bar{W}([d]) M_{\bar{g}} &= \frac{1}{d} \sum_{(i_1,...,i_d), \atop i_j \in \{1,...,n\} \text{ and distinct}} D_{(t_{i_1},...,t_{i_d})}M_{\bar{g}}\\
		&= \frac{1}{d}\sum_{(i_1,...,i_d), \atop i_j \in \{1,...,n\} \text{ and distinct}} M_{(t_{i_1}...t_{i_d}) \bar{g}}\\ &= \sum_{\bar{\sigma} \text{ } d \text{-cycle in }\bar{S}_{n}} M_{\bar{\sigma} \bar{g}}.	
	\end{align*}

	Now we want to show for any $d$-tuple $(a_1,...,a_d)$ (with maybe some $a_i$ not distinct), we have
	\begin{equation}\label{eq31}
		W([d]) M_{\bar{g}}(a_1,...,a_n) =\sum_{\bar{\sigma} \text{ } d \text{-cycle in } \bar{S}_{n}} M_{\bar{\sigma} \bar{g}}(a_1,...,a_n).
	\end{equation}
We note that for any $n$-tuple $(a_1,...,a_n)$, the right hand side of \eqref{eq31} is always a sum of $\frac{1}{d}{n \choose d}d!$ monomials, each of which corresponds an unique element in $\bar{S}_n$. We hope for any $n$-tuple $(a_1,...,a_n)$, the left hand side is a sum of $\frac{1}{d}{n \choose d}d!$ monomials or we can find ${n \choose d}d!$ nontrivial operators in $W([d])$ with respect to $M_{\bar{g}}(a_1,...a_n)$. (Recall in the definition of $W([d])$, we have a coefficient $\frac{1}{d}$.) But the left hand side is complicated if the $a_{i}$ are not distinct. Indeed, if $a_{i}$ are not distinct, there are fewer nontrivial operators $D_{(a_{i_1},...,a_{i_d})}$ in $W([d])$ with respect to $M_{\bar{g}}(a_1,...a_n)$ than that in $\bar{W}([d])$ with respect to $M_{\bar{g}}$. For example, let's consider about the following case
	\begin{align*}
		M=X_{(t_1,t_2,t_3)}=X_{t_1 t_2}X_{t_2 t_3}X_{t_3 t_1}.
	\end{align*}
There are $6$ nontrivial differential operators $D_{(t_{i_1},t_{i_2},t_{i_3})}$ in $\bar{W}([3])$ with respect to $M$, where $(i_1,i_2,i_3)$ is any $3$-tuples such that $i_1,i_2,i_3 \in \{1,2,3\}$ and distinct. However, if we take $a_1 = a_2 =1,a_3=2$, we get only $3$ nontrivial operators in $W([3])$ with respect to $X_{(1,1,2)}$. They are $D_{(1,1,2)}$, $D_{(1,2,1)}$, $D_{(2,1,1)}$. In this case, we have to check whether we can get enough monomials on the left hand side of the equation.
	
	Before we discuss different cases, we first give some results which are based on basic calculations. The number of $d$-cycles in $S_n$ is $\frac{1}{d}{n \choose d}d!$. Given a monomial $M_{\bar{g}}$ of degree $n$, the number of non-trivial operators $D_{(t_{i_1},...,t_{i_d})}$ in $\bar{W}([d])$ corresponding to $M_{\bar{g}}$ is ${n \choose d}d!$. Each differential operator $D_{(t_{i_1},...,t_{i_d})}$ corresponds to a unique $d$-tuple $(t_{i_1},...,t_{i_d})$, which corresponds to a unique $d$-cycle $(t_{i_1}\text{ }...\text{ }t_{i_d})$. But, a $d$-cycle corresponds to $d$ $d$-tuples or $d$ differential operators in $\bar{W}([d])$.

Now we begin to prove Equation \eqref{eq31}.
		
	\textbf{Case 1,} $a_{i}$ are distinct, $1 \leq i \leq d$.
	
	In this case, each "non-trivial operator" $D_{(a_{i_1},..., a_{i_d})}$ corresponds to a unique $d$-cycle in $\bar{S}_n$. But this correspondence is not injective, it is an $d$ to $1$ correspondence. For example, when $d$ is $3$, we have
	\begin{align*}
		:D_{(a_1,a_2,a_3)}:=:D_{(a_2,a_3,a_1)}:=:D_{(a_3,a_1,a_2)}:.
	\end{align*}
	Hence, we get
	\begin{align*}
		W([d])M_{\bar{g}}(a_1,...a_n)=\sum_{\bar{\sigma} \text{ } d \text{-cycle in } S_{n}} M_{\bar{\sigma} \bar{g}}(a_,...,a_n).
	\end{align*}
	The number of non-trivial operators with respect to $X_{\bar{g}}(a_1,...a_n)$ in $W([d])$ is ${n \choose d}d!$ and each corresponds to a unique $d$-tuple in variables $t_i, 1 \leq i \leq n$.

	\textbf{Case 2,} $a_i$ are not all distinct and all $X_{a_i a_{i+1}}$ are distinct.
	
	First, we consider a special case that only two numbers of $\{ a_i \}_{1 \leq i \leq n}$ are the same and we assume that $a_p=a_q$. Now we consider the operator $D_{(a_{i_1},..., a_{i_d})}$.
	
	\begin{enumerate}
	\item If all $a_{i_j} \neq a_p$, then the non-trivial differential operator $D_{(a_{i_1},..., a_{i_d})}$ with respect to $X_{(a_1,...,a_n)}$ corresponds to a unique $d$-tuple in $t_i$. Under this condition, there are ${n-2 \choose d}d!$ $d$-tuples $(a_{i_1},...,a_{i_d})$ satisfying this condition and each corresponds to a unique $d$-tuple $(t_{i_1},...,t_{i_d})$.
	
	\item If only one number in the tuple $(a_{i_1},...,a_{i_d})$ is $a_p$ and we assume $a_{i_k}=a_p$, then the non-trivial differential operator $D_{(a_{i_1},..., a_{i_d})}$ corresponds to two $d$-tuples in variables $t_i, 1 \leq i \leq n$. Indeed, we have
	\begin{align*}
		 & D_{a_{i_{k-1}}a_{i_{k}}} X_{a_1 a_2}...X_{a_n a_1}
		= D_{a_{i_{k-1}}a_{p}} X_{a_1 a_2}...X_{a_n a_1}=\\
		=& \left(\sum_{c \geq 1}X_{a_{i_{k-1}} c} \frac{\partial}{\partial X_{a_{p} c}} \right)X_{a_1 a_2}...X_{a_n a_1}=\\
		=& \left(X_{a_{i_{k-1}} a_{p+1}} \frac{\partial}{\partial X_{a_p a_{p+1}}} + X_{a_{i_{k-1}} a_{q+1}} \frac{\partial}{\partial X_{a_p a_{q+1}}}\right)X_{a_1 a_2}...X_{a_n a_1}.
	\end{align*}
	The last equality holds because only these two terms in $D_{a_{i_{k-1}}a_p}$ act non-trivially on $X_{(a_1,...,a_n)}$ with our assumptions $a_p=a_q$. Compared with $(t_{i_1},...,t_{i_d})$, the differential operator $D_{(a_{i_1},..., a_{i_d})}$ now actually corresponds to two $d$-tuples in variables $t_i, 1 \leq i \leq n$. They are
	\begin{align*}
		& (t_{i_1},...,t_{i_{k-1}},t_{p},t_{i_{k+1}},...,t_{i_d}),\\ & (t_{i_1},...,t_{i_{k-1}},t_{q},t_{i_{k+1}},...,t_{i_d}).
	\end{align*}
Under this condition, there are $\frac{1}{2} {n-2 \choose d-1}{2 \choose 1}$ $d$-tuples $(a_{i_1},...,a_{i_d})$ satisfying the condition that only one number in the tuple $(a_{i_1},...,a_{i_d})$ is $a_p$, and each of them corresponds to two $d$-tuples in variables $t_i, 1 \leq i \leq n$.
	\item If there are two numbers in the tuple $(a_{i_1},...,a_{i_d})$ are $a_p$ and we assume they are $a_{i_l}=a_{i_k}=a_p$, then each non-trivial differential operator $D_{(a_{i_1},..., a_{i_d})}$ corresponds to two elements in the permutation group $\bar{S}_n$. Indeed, we have
\begin{align*}
		  & :D_{a_{i_{l-1}} a_{i_{l}}} D_{a_{i_{k-1}} a_{i_{k}}}:X_{a_1 a_2}...X_{a_n a_1}
		= :D_{a_{i_{l-1}} a_{p}} D_{a_{i_{k-1}} a_{p}}:X_{a_1 a_2}...X_{a_n a_1}.\\
\end{align*}
Since we only care about the non-trivial terms, so we have to calculate the differential operators $:D_{ a_{i_{l-1}}a_{p}} D_{a_{i_{k-1}}a_{p}}:$, of which the differential part is
\begin{align*}
\frac{\partial^2}{\partial X_{a_p a_{p+1}} \partial X_{a_q a_{q+1}}}.
\end{align*}
By definition, we know
\begin{align*}
		D_{a_{i_{l-1}} a_{p}}&= \sum_{c \geq 1}X_{a_{i_{l-1}} c} \frac{\partial}{\partial X_{a_{p} c}},\\
		D_{a_{i_{k-1}} a_{p}}&= \sum_{d \geq 1}X_{a_{i_{k-1}} d} \frac{\partial}{\partial X_{a_{p} d}}.
\end{align*}
	So, we have
	\begin{align*}
		& :D_{a_{i_{l-1}} a_{p}} D_{a_{i_{k-1}} a_{p}}:X_{a_1 a_2}...X_{a_n a_1}=\\
	   =& (\sum_{c,d \geq 1} X_{a_{i_{l-1}} c}X_{a_{i_{k-1}} d}\frac{\partial}{\partial X_{a_{p} c}}\frac{\partial}{\partial X_{a_{p} d}})X_{a_1 a_2}...X_{a_n a_1}=\\	
	   =& ( X_{a_{i_{l-1}} a_{p+1}}X_{a_{i_{k-1}} a_{q+1}}\frac{\partial}{\partial X_{a_{p} a_{p+1}}}\frac{\partial}{\partial X_{a_{p} a_{q+1}}}+\\
	   +& X_{a_{i_{l-1}} a_{q+1}}X_{a_{i_{k-1}} a_{p+1}}\frac{\partial}{\partial X_{a_{p} a_{q+1}}}\frac{\partial}{\partial X_{a_{p} a_{p+1}}} ) X_{a_1 a_2}...X_{a_n a_1}.
	\end{align*}
The last equality holds because all $X_{a_i a_j}$ are distinct by the assumption of \textbf{Case 2}. Hence, $a_{q+1} \neq a_{p+1}$. Compared with $(t_{i_1},...,t_{i_d})$, the differential operator $D_{(a_{i_1},..., a_{i_d})}$ corresponds to two $d$-tuples. They are
\begin{align*}
		& (t_{i_1},...,t_{i_k},...,t_{i_{l}},...,t_{i_d}),\\ & (t_{i_1},...,t_{i_{l}},...,t_{i_k},...,t_{i_d}).
\end{align*}
In this case, $D_{(a_{i_1},..., a_{i_d})}$ corresponds to two different $d$-tuples in variables $t_i, 1 \leq i \leq n$.	There are $\frac{1}{2}{n-2 \choose d-2}d!$ $d$-tuples $(a_{i_1},...,a_{i_d})$ satisfying the condition that there are two numbers in the tuple $(a_{i_1},...,a_{i_d})$ are $a_p$, and each of them corresponds to two $d$-tuples in variables $t_i, 1 \leq i \leq n$.
	\end{enumerate}
	
	Hence, in this special case, the number of $d$-tuples in variables $t_i, 1 \leq i \leq n,$ corresponding to the nontrivial differential operators with respect to the monomial $X_{(a_1,...,a_n)}$ is
	\begin{align*}
		{n-2 \choose d}d!+2\times \frac{1}{2}{n-2 \choose d-1}{2 \choose 1}d! + 2\times \frac{1}{2}{n-2 \choose d-2}d!={n \choose d}d!.
	\end{align*}
By the discussion above, the ${n \choose d}d!$ tuples are different. Recall ${n \choose d}d!$ is also the number of non-trivial operators $D_{(t_{i_1},...,t_{i_d})}$ in $\bar{W}([d])$ with respect to a fix monomial. So, we have
	\begin{align*}
		d \times W([d])M_{\bar{g}}(a_1,...,a_n)&= \sum_{(i_1,...,i_d), \atop i_j \in \{1,...,n\} \text{ and distinct}} \left(D_{(t_{i_1},...,t_{i_d})}M_{\bar{g}}\right)(a_1,...,a_n)\\
		&=\sum_{(i_1,...,i_d), \atop i_j \in \{1,...,n\} \text{ and distinct}} M_{(i_1...i_d) \bar{g}}(a_1,...,a_n)\\
&=d \times \sum_{\bar{\sigma} \text{ } d \text{-cycle in } \bar{S}_{n}} M_{\bar{\sigma} \bar{g}}(a_1,...,a_n)\\.
	\end{align*}
For the general case of $s$ integers $a_{j_1}=a_{j_2}=...=a_{j_2}$ but $X_{a_i a_{i+1}}$ all distinct, the same argument proves what we want. We leave it to the reader to check this.
		
	\textbf{Case 3,} $a_i$ are not all distinct, and some $X_{a_i a_{i+1}}$ are the same.
	
    We still consider a special case that only two terms in $X_{(a_1,...,a_n)}$ are the same. We assume $X_{a_{p} a_{p+1}}=X_{a_{q} a_{q+1}}$, where $p \neq q$ and $p+1,q+1$ means the addition mod $n$. Under this condition, we consider some examples. First, we have $a_p = a_q$ and $a_{p+1}=a_{q+1}$ and the other $a_i$ are distinct. Some examples are
    \begin{align*}
    	X_{11}X_{11}, p=1, q=2,\\
    	X_{12}X_{21}X_{12}X_{23}X_{31},p=1,q=3.
    \end{align*}
	These are cases we want to study.
	
	At the same time, there are some other examples.
	\begin{align*}
		X_{11}X_{11}X_{12}X_{21}.
	\end{align*}
	In this example, we have $X_{11}^2$ and another term $X_{12}$, which means there are some other $a_i$ such that $a_i=a_p$. To solve this type of monomials, it is a combination of \textbf{Case 2} and \textbf{Case 3}.
	
	Now, let's consider the problem that only two terms in $X_{(a_1,...,a_n)}$ are the same
\begin{align*}
X_{a_p a_{p+1}}=X_{a_q a_{q+1}}, \quad a_p = a_q, a_{p+1}=a_{q+1}, \quad p \neq q,
\end{align*}
and the other $a_i$ are distinct. In this case, we still discuss the nontrivial operators $D_{(a_{i_1},...,a_{i_d})}$.
	
	\begin{enumerate}
		\item If all $a_{i_j} \neq a_p$, then $D_{(a_{i_1},...,a_{i_d})}$ corresponds to a unique $d$-tuple in variables $t_i, 1\leq i \leq n$. Under this condition, although $a_{i_j} \neq a_p$, $a_{i_j}$ could be $a_{p+1}$. By our assumptions, we know that only two terms in $X_{(a_1,...,a_n)}$ are the same. Hence, there are ${n-2 \choose d}d!$ $d$-tuples $(a_{i_1},...,a_{i_d})$ satisfying this condition and each of them corresponds to a unique $d$-tuple in variables $t_i, 1\leq i \leq n$, by the conclusion of \textbf{Case 2}.
		
		\item Only one integer in $\{a_{i_j}\}_{1 \leq j \leq d}$ is $a_p$, say $a_{i_k}=a_p$.
		
		First, assume all $a_{i_j}$ are not $a_{p+1}$. Then, we have
		\begin{align*}
			& D_{a_{i_{k-1}}a_{i_{k}}} X_{(a_1,...,a_n)}
			= D_{a_{i_{k-1}}a_{p}} X_{a_1 a_2}...X_{a_n a_1}=\\
			=& (\sum_{c \geq 1}X_{a_{i_{k-1}} c} \frac{\partial}{\partial X_{a_{p} c}} )X_{a_1 a_2}...X_{a_n a_1}=\\
			=& (X_{a_{i_{k-1}} a_{p+1}} \frac{\partial}{\partial X_{a_p a_{p+1}}} )X_{a_1 a_2}...X_{a_n a_1}=\\
			=& (X_{a_{i_{k-1}} a_{p+1}} \frac{\partial}{\partial X_{a_p a_{p+1}}} ) X_{a_p a_{p+1}}^2 ... \text{ }.\\
		\end{align*}
		The last equality holds because we have $X_{a_{p} a_{p+1}}=X_{a_{q} a_{q+1}}$. We note there is a square $X_{a_p a_{p+1}}^2$ in the monomial $X_{(a_1,...,a_n)}$. Hence, we will get two (same) monomials. Compared with $(t_{i_1},...,t_{i_d})$, this differential operator $D_{(a_{i_1},...,a_{i_d})}$ corresponds to two $d$-tuples in variables $t_i, 1\leq i \leq n$. They are
		\begin{align*}
			 &(t_{i_1},...,t_{i_{k-1}},t_{p},t_{i_{k+1}},...,t_{i_d}),\\
			 &(t_{i_1},...,t_{i_{k-1}},t_{q},t_{i_{k+1}},...,t_{i_d}).
		\end{align*}
		Similarly, if some $a_{i_j}$ are $a_{p+1}$, then the conclusion follows by the combination of the above argument and the argument in \textbf{Case 2}. (If it contains both $a_p$ and $a_q$, then it corresponds to $4$ permutations.) We conclude all non-trivial differential operators $D_{(a_{i_1},...,a_{i_d})}$ in the case correspond to ${2 \choose 1}{n-2 \choose d-1}d!$ $d$-tuples in variables $t_i, 1\leq i \leq n$.
		
		\item Two of the integers $a_{i_j}, 1 \leq j \leq d$ are $a_p$ and we assume they are $a_{i_l}=a_{i_k}=a_p$.
		
		Similarly, assume all $a_{i_j}$ are not $a_{p+1}$. We have
		\begin{align*}
			& :D_{a_{i_{l-1}} a_{i_{l}}} D_{a_{i_{k-1}} a_{i_{k}}}:X_{a_1 a_2}...X_{a_n a_1}\\
			= & :D_{a_{i_{l-1}} a_{p}} D_{a_{i_{k-1}} a_{p}}:X_{a_1 a_2}...X_{a_n a_1}\\
			=& :D_{a_{i_{l-1}} a_{p}} D_{a_{i_{k-1}} a_{p}}:X_{a_p a_{p+1}}^2...\\
			=& (\sum_{c,d \geq 1} X_{a_{i_{l-1}} c}X_{a_{i_{k-1}} d}\frac{\partial}{\partial X_{a_{p} c}}\frac{\partial}{\partial X_{a_{p} d}})X_{a_p a_{p+1}}^2...\\	
			=& (X_{a_{i_{l-1}} a_{p+1}}X_{a_{i_{k-1}} a_{p+1}} \frac{\partial^2}{\partial^2 X_{a_{p} a_{p+1}}})X_{a_p a_{p+1}}^2...  .
		\end{align*}
		Note we have a square $X_{a_p a_{p+1}}^2$. Hence, we will get two (same) monomials. Compared with $(t_{i_1},...,t_{i_d})$, this differential operator $D_{(a_{i_1},...,a_{i_d})}$ corresponds to two $d$-tuples in variables $t_i, 1\leq i \leq n$. They are
		\begin{align*}
			&(t_{i_1},...,t_{i_k},...,t_{i_{l}},...,t_{i_d}),\\ &(t_{i_1},...,t_{i_{l}},...,t_{i_k},...,t_{i_d}).
		\end{align*}
		Similarly, if some $a_{i_j}$ are $a_{p+1}$, then the conclusion follows by the combination of the above argument and \textbf{Case 2}. (If it contains both $a_p$ and $a_q$, then it corresponds to $4$ permutations.) We conclude all non-trivial differential operators $D_{(a_{i_1},...,a_{i_d})}$ in the case correspond to ${n-2 \choose d-2}d!$ $d$-tuples in variables $t_i, 1\leq i \leq n$.
	\end{enumerate}
By the discussion above, the number of $d$-tuples in variables $t_i, 1 \leq i \leq n,$ corresponding to the nontrivial differential operators with respect to the monomial $X_{(a_1,...,a_n)}$ is
	\begin{align*}
		{n-2 \choose d}d!+2\times \frac{1}{2}{n-2 \choose d-1}{2 \choose 1}d! + 2\times \frac{1}{2}{n-2 \choose d-2}d!={n \choose d}d!.
	\end{align*}
These ${n \choose d}d!$ tuples are different. Hence, we have
	\begin{align*}
		d \times W([d])M_{\bar{g}}(a_1,...a_n)=d \times \sum_{\sigma \text{ } d \text{-cycle in } \bar{S}_{n}} M_{\sigma \bar{g}}(a_,...,a_n).
	\end{align*}
For the general case that there are $k$ same factors in $X_{a_1 a_2}...X_{a_n a_1}$, the same argument proves what we want. We leave it to the reader to check.

	Combining the above three cases, we get the following formula by summing over all monomials $M_{\bar{g}}(a_1,...,a_n)=X_{(a_1,...,a_n)}$ in $\Phi(g)$,
	\begin{align*}
		\Phi(K_{1^{n-d}d}g)=W([d]) \Phi(g).
	\end{align*}
\end{proof}

\section{Another Definition of $W([n])$}

In this section, we will consider $W([n])$ as a differential operator on the ring $\mathbb{C}[p_1,p_2,...]$ or $\mathbb{C}[[p_1,p_2,...]]$ by Theorem \ref{18}.

\subsection{Definition of $\Delta_n$}

Consider the cut-and-join operator $\Delta$ \cite{MR1249468},

\begin{equation}\label{eq40}
	\Delta= \frac{1}{2}\sum_{i \geq 1}\sum_{j\geq 1}(ijp_{i+j}\frac{\partial^2}{\partial p_i \partial p_j}+(i+j)p_i p_j \frac{\partial}{\partial p_{i+j}}).
\end{equation}

Recall Definition \ref{19} and \ref{110}. We have the following proposition.

\begin{proposition}\label{31}
	For any $g \in \mathbb{C}S_n$,
	\begin{align*}
		\Phi(K_{1^{n-2}2}g)=\Delta \Phi(g).
	\end{align*}
\end{proposition}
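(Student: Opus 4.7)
The plan is to obtain this proposition as an immediate specialization of Theorem \ref{112} with $d=2$. That theorem already gives
\[
\Phi(K_{1^{n-2}2}g) = W([2])\Phi(g)
\]
for every $g \in \mathbb{C}S_n$, so the whole content of the proposition reduces to the identification of the operator $W([2])$ with the cut-and-join operator $\Delta$ of equation \eqref{eq40}, as differential operators on $\mathbb{C}[p_1,p_2,\ldots]$. This identification is asserted in the introduction and will be the only thing I need to verify.

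To verify $W([2])=\Delta$, I would expand the definition
\[
W([2]) \;=\; \tfrac{1}{2}:tr(D^{2}):\; =\; \tfrac{1}{2}\sum_{a,b,c,e\geq 1} X_{ac}X_{be}\,\frac{\partial^{2}}{\partial X_{bc}\,\partial X_{ae}}
\]
and apply it to an arbitrary monomial $p_{\mu_{1}}\cdots p_{\mu_{s}}$ in the subring $\mathbb{C}[p_{1},p_{2},\ldots]$. By Theorem \ref{18} the output lies in this subring, so it suffices to match coefficients of $p$-monomials. The two partial derivatives either act on the same trace factor $p_{k}$ or on two distinct factors $p_{i}$ and $p_{j}$. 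In the first situation, the derivatives cut one cycle of length $i+j$ into two pieces which, after being closed up by the factors $X_{ac}X_{be}$, become cycles of lengths $i$ and $j$; summing over cyclic choices of where to cut produces the "cut" part $\tfrac{1}{2}(i+j)p_{i}p_{j}\,\partial/\partial p_{i+j}$. In the second, the derivatives break two cycles of lengths $i$ and $j$ and the factors $X_{ac}X_{be}$ splice them into a single cycle of length $i+j$, yielding the "join" part $\tfrac{1}{2}\,ij\,p_{i+j}\,\partial^{2}/(\partial p_{i}\partial p_{j})$.

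The main obstacle is purely combinatorial bookkeeping: each derivative $\partial/\partial X_{bc}$ acting on $p_{k}=\sum X_{i_{1}i_{2}}\cdots X_{i_{k}i_{1}}$ picks up a factor of $k$ coming from the cyclic choice of position in the trace, and one has to confirm that these multiplicities interact correctly with the prefactor $\tfrac{1}{2}$ and with the normal-ordered sum to reproduce exactly the coefficients in \eqref{eq40}. Once this routine trace calculation is done, equality of $W([2])$ and $\Delta$ follows, and combining this with Theorem \ref{112} at $d=2$ completes the proof.
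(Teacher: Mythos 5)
Your proposal is logically sound but proceeds along a genuinely different route from the paper: the paper's entire proof of Proposition \ref{31} is a citation to Goulden \cite{MR1249468}, Prop.\ 3.1, where the identity is established directly by combinatorics of multiplying a permutation by the sum of all transpositions, with no reference to $W$-operators at all. You instead invert the paper's logical flow, deducing the classical special case from the paper's new general result: Theorem \ref{112} at $d=2$ gives $\Phi(K_{1^{n-2}2}g)=W([2])\Phi(g)$, and since the proof of Theorem \ref{112} in Section 3 (quivers, Lemma \ref{210}, the case analysis) nowhere uses Proposition \ref{31}, there is no circularity; what remains is the operator identity $W([2])=\Delta$, which you verify by expanding $\tfrac12\!:\!tr(D^2)\!:$ and sorting the second derivatives into same-factor (cut) and distinct-factor (join) contributions, correctly matching the two terms of \eqref{eq40}. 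Two remarks on the trade-off. First, your approach buys self-containedness and makes explicit that Theorem \ref{112} genuinely generalizes Goulden's formula, whereas the citation is shorter and preserves Proposition \ref{31} as independent motivation; note also that the paper offers you an internal shortcut that avoids the trace calculus entirely, namely Theorem \ref{315} at $d=2$ together with Example \ref{38}, where $\Delta_2$ is observed to equal $\Delta$. Second, your label of the remaining verification as ``routine bookkeeping'' somewhat undersells it: the degenerate configurations (coincident indices $a=b$, repeated variables such as $X_{aa}$ or equal factors $X_{a_pa_{p+1}}=X_{a_qa_{q+1}}$) are exactly the collisions that occupy Cases 2 and 3 in the paper's proof of Theorem \ref{112}, so a fully careful check of $W([2])=\Delta$ must handle the same multiplicity issues; since that identification is asserted in the introduction and established in \cite{MR2864467}, the argument stands, but the sketch should not be mistaken for a one-line computation.
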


\begin{proof}
	Goulden proves this in \cite{MR1249468} Prop 3.1.
\end{proof}

\begin{definition}\label{32}
	For any permutation $\delta \in S_d$, let $\delta = \delta_1...\delta_m$, which is the decomposition of $\delta$ into disjoint cycles. For a positive integer $N \leq d$, $N \in \delta_i$ means $\delta_i (N) \neq N$. Fix $d$ positive integers $a_j$, where $1 \leq j \leq d$. Define $\hat{p}_\delta(a_1,...,a_d)$ to be the monomial
	\begin{align*}
		\hat{p}_\delta(a_1,...,a_d) = \prod_{i=1}^{m} p_{\sum_{j \in \delta_i}a_j}.
	\end{align*}
	Similarly, define $\frac{\partial}{\partial \hat{p}_\delta}(a_1,...,a_d)$ to be the operator on $\mathbb{C}[[p_1,p_2,...]]$,
	\begin{align*}
		\frac{\partial}{\partial \hat{p}_\delta}(a_1,...,a_d) = \prod_{i=1}^{m}((\sum_{j \in \delta_i}a_j) \frac{\partial}{\partial p_{\sum_{j \in \delta_i}a_j}}).
	\end{align*}
	If we fix positive integers $d$ and $a_1$,...,$a_d$, we abbreviate $\hat{p}_\delta(a_1,...,a_d)$ by $\hat{p}_\delta$ and $\frac{\partial}{\partial \hat{p}_\delta}(a_1,...,a_d)$ by $\frac{\partial}{\partial \hat{p}_\delta}$.
\end{definition}

\begin{remark}\label{33}
		For any element $\delta \in S_d$, it can be written as the product of disjoint cycles. In this paper, we also write "$1$-cycle" explicitly to make the above notations clearer. For example, let's consider the permutation $(123) \in S_4$. In this paper we prefer to write it as $(123)(4)$. In particular, we define an integer $n$ contained in a $1$-cycle $(n')$ if and only if $n=n'$.
\end{remark}

\begin{example}\label{34}
	Let $\delta=(123)(4) \in S_4$, then we have
	\begin{align*}
		 \hat{p}_\delta(a_1,...,a_4)&=p_{a_1+a_2+a_3}p_{a_4},\\
		 \frac{\partial}{\partial \hat{p}_\delta}(a_1,...,a_4)&=(a_1+a_2+a_3)a_4 \frac{\partial^2}{\partial p_{a_1+a_2+a_3} \partial p_{a_4}}.
	\end{align*}
\end{example}

\begin{remark}\label{35}
	Given $\delta\in S_d$, we consider $\hat{p}_\delta$ as a map from $\mathbb{Z}^{d}_{> 0}$ to $\mathbb{C}[p_1,p_2,...]$ and $\frac{\partial}{\partial \hat{p}_\delta}$ as a map from $\mathbb{Z}^{d}_{> 0}$ to $\mathbb{C}[\frac{\partial}{\partial p_1},\frac{\partial}{\partial p_2},...]$. Generally, we can introduce variables $t_i$ and we write $\hat{p}_\delta$ and $\frac{\partial}{\partial \hat{p}_\delta}$ in the following form similar to Definition \ref{212},
	\begin{align*}
		 \hat{p}_\delta(t_1,...,t_d) &= \prod_{i=1}^{m} p_{\sum_{j \in \delta_i}t_j},\\
		 \frac{\partial}{\partial \hat{p}_\delta}(t_1,...,t_d) &= \prod_{i=1}^{m}((\sum_{j \in \delta_i}t_j) \frac{\partial}{\partial p_{\sum_{j \in \delta_i}t_j}}).
	\end{align*}
\end{remark}

\begin{definition}\label{36}
	Consider the $d$-cycle $(d \text{ }...\text{ } 2 \text{ } 1)$ in $S_d$. We define the bijective map $\phi_d$ of $S_d$ as
\begin{align*}
\phi_d(\delta)=(d\text{ }...\text{ }1)\delta, \quad \delta \in S_d.
\end{align*}
If we fix $d$, we will use $\phi$ to represent this map.
\end{definition}

\begin{definition}\label{37}
	We define the differential operator $\Delta_d$ on the polynomial ring $\mathbb{C}[p_1,p_2,...]$ as
	\begin{align*}
		\Delta_d =\frac{1}{d} \sum_{\delta \in S_d}\sum_{a_1,...,a_d \geq 1} \hat{p}_{\phi(\delta)}(a_1,...,a_d) \frac{\partial}{\partial \hat{p}_\delta}(a_1,...,a_d).
	\end{align*}
\end{definition}

\begin{remark}
	The definition of the operator $\Delta_d$ depends on the map $\phi_d(\delta) = (d \text{ }...\text{ }1) \delta$, where $(d \text{ }...\text{ }1)$ is a $d$-cycle. Indeed, we can replace $(d...1)$ by any $d$-cycle in $S_d$ and define a new bijective map of $S_d$, which will give the same operator $\Delta_d$. We will prove this property in Corollary \ref{3033} and \ref{317}.
\end{remark}

Now we give two examples about the operator $\Delta_d$.
\begin{example}\label{38}
	\begin{align*}
		\Delta_2= \frac{1}{2}\sum_{i\geq1}\sum_{j\geq1}(ijp_{i+j}\frac{\partial^2}{\partial p_i \partial p_j}+(i+j)p_i p_j \frac{\partial}{\partial p_{i+j}}),
	\end{align*}
	where the first part corresponds to $(1)(2) \in S_2$ and the second part corresponds to $(1 2) \in S_2$. We see that $\Delta_2$ is the cut-and-join operator $\Delta$ \eqref{eq40}.
	\begin{align*}
		\Delta_3 = \frac{1}{3}\sum_{i_1,i_2,i_3 \geq 1}
		(&i_1i_2i_3 p_{i_1+i_2+i_3}\frac{\partial^3}{\partial p_{i_1} \partial p_{i_2}\partial p_{i_3}}+  & (1)(2)(3)\\
		+&i_1(i_2+i_3)p_{i_1+i_3}p_{i_2}\frac{\partial^2}{\partial p_{i_1} \partial p_{i_2+i_3}}+ & (1)(23)\\
		+&i_2(i_1+i_3)p_{i_1+i_2}p_{i_3}\frac{\partial^2}{\partial p_{i_2} \partial p_{i_1+i_3}}+ & (2)(13)\\
		+&i_3(i_1+i_2)p_{i_3+i_2}p_{i_1}\frac{\partial^2}{\partial p_{i_3} \partial p_{i_1+i_2}}+ & (3)(12)\\
		+&(i_1+i_2+i_3)p_{i_1}p_{i_2}p_{i_3} \frac{\partial}{\partial p_{i_1+i_2+i_3}}+ & (123)\\
		+&(i_1+i_2+i_3)p_{i_1+i_2+i_3}\frac{\partial}{\partial p_{i_1+i_2+i_3}}) & (132).
	\end{align*}
	where each summation corresponds to the permutation $(1)(2)(3)$,$(1)(23)$,$(2)(13)$,
	$(3)(12)$,$(123)$,$(321)$ in turn.
\end{example}

\begin{definition}\label{39}
	Let $n$ and $d$ be positive integers, $d \leq n$. $C_{n,d}$ is the set of all $d$-cycles in $S_n$ and $\bar{C}_{n,d}$ is the set of all $d$-tuples $[a_1,...,a_d]$ with positive integers $a_i$ such that $1 \leq a_i \leq n$ and $a_i \neq a_j$ if $i \neq j$. We define a map $\pi_{n,d}:\bar{C}_{n,d} \rightarrow C_{n,d}$ such that
\begin{align*}
\pi_{n,d}([a_1,...,a_d])=(a_1 \text{ }...\text{ }a_d).
\end{align*}
Clearly, this map is $d$-to-$1$. Given an $d$-tuple $\bar{\sigma} \in \bar{C}_{n,d}$ and a permutation $g \in S_n$, we define the action of $\bar{C}_{n,d}$ on $S_n$ as following,
	\begin{align*}
		\bar{\sigma}g:=\pi_{n,d}(\bar{\sigma})g.
	\end{align*}
We define $\mathbb{C} \bar{C}_{n,d}= \oplus_{[a_1,...,a_d]\in \bar{C}_{n,d}} \mathbb{C}[a_1,...,a_d]$ as the vector space with basis the elements of $\bar{C}_{n,d}$. Finally, we define the element $\bar{K}_{1^{n-d}d} \in \mathbb{C} \bar{C}_{n,d}$ as the sum of all $d$-tuples in $\bar{C}_{n,d}$. 	
\end{definition}
In this paper, given positive integers $n$ and $d$, we abbreviate $\pi_{n,d}$ by $\pi$ and consider $\pi$ as a linear map from $\mathbb{C} \bar{C}_{n.d}$ to $\mathbb{C}C_{n,d}$. We are going to use $\bar{K}_{1^{n-d}d}$ to show that
\begin{align*}
d\Phi(K_{1^{n-d}d}g)=\Phi(\bar{K}_{1^{n-d}d}g)= d\Delta_d \Phi(g).
\end{align*}

\subsection{Proof when $d=3$}
Given $\bar{\sigma} \in \bar{C}_{n,3}$ and $g \in S_n$, we will calculate $\bar{\sigma}g$ and translate it into differential operators and polynomials.
	
\begin{construction}\label{3009}
Let $\bar{\sigma}=[j_3,j_2,j_1]$ be a $3$-tuple. We are going to classify elements $g \in S_n$ according to the occurrence of $j_1,j_2,j_3$ in the disjoint cycles appearing in $g$. There are $6$ cases with respect to $\bar{\sigma}$, one for each permutation of $S_3$,
\begin{enumerate}
		\item $g=(j_1...)(j_2...)(j_3...)...\text{ },$
		\item $g=(j_1...)(j_2...j_3...)...\text{ },$
		\item $g=(j_1...j_3...)(j_2...)...\text{ },$
		\item $g=(j_1...j_2...)(j_3...)...\text{ },$
		\item $g=(j_1...j_2...j_3...)...\text{ },$
		\item $g=(j_1...j_3...j_2...)...\text{ }.$
\end{enumerate}

Clearly, for any element $g \in S_n$, it falls into one and only one case with respect to $\bar{\sigma}$. Now, consider case (4) $g=(j_1\colorbox{red}{...}j_2\colorbox{blue}{...})(j_3\colorbox{green}{...})...$, where the red dots represent the digits after $j_1$ before $j_2$, the blue dots represent the other digits after $j_2$ before $j_1$ (since it is a cycle, so the last element will go back to $j_1$) and the green dots represent the other digits in the cycle of $j_3$.
We use the following steps to calculate $\bar{\sigma}g$:

\begin{enumerate}
	\item Restrict $g=(j_1\colorbox{red}{...}j_2\colorbox{blue}{...})(j_3\colorbox{green}{...})...$ to the element $(j_1 j_2)(j_3)$ by forgetting all digits except $j_1$,$j_2$,$j_3$ but preserving the cycle structure. $(j_1 j_2)(j_3)$ can be considered as an element in $Aut\{j_1,j_2,j_3\}$. Let $g_{\bar{\sigma}}=(j_1 j_2)(j_3)$.
	\item  Calculate $[j_3,j_2,j_1]g_{\bar{\sigma}}=(j_1)(j_2 j_3)$.
	\item  Insert all numbers forgotten by the restriction into $\bar{\sigma}g_{\bar{\sigma}}$, then we have the consequence,
	\begin{align*}
		\bar{\sigma}g=(j_1\colorbox{red}{...})(j_2\colorbox{green}{...}j_3\colorbox{blue}{...})... \text{ .}
	\end{align*}
\end{enumerate}
This procedure works for all cases.
\end{construction}

\begin{remark} \label{310}
	\begin{itemize}
		\item Let $\bar{\sigma}=[3,2,1]$ and $\bar{\sigma'}=[1,3,2]$. Although $\pi(\bar{\sigma})=\pi(\bar{\sigma'})=(1 3 2)$, $g_{\bar{\sigma}}$ and $g_{\bar{\sigma'}}$ are not in the same type in general. For instance, assume $g=(12)(3)$. Consider $\bar{\sigma}=[3,2,1]$, so that hence $g_{\bar{\sigma}}=(j_1 j_2)(j_3)$, which is in Case (4). Now, consider $\bar{\sigma'}=[1 3 2]$, so that $g_{\bar{\sigma'}}=(j_3 j_1)(j_2)$, which is in Case (3).
		
		\item Given different $\bar{\sigma}_1$, $\bar{\sigma}_2$, we can get $g_{\bar{\sigma}_1}=g_{\bar{\sigma}_2}$. For example, if $g=(3 2 1)$,$\bar{\sigma}_1=[3,2,1]$ and $\bar{\sigma}_2=[1,3,2]$, then we have $g_{\bar{\sigma}_1}=(3 2 1)=(2 1 3)=g_{\bar{\sigma}_2}$.
\end{itemize}
\end{remark}

\begin{remark}\label{30008}
Let $g$ be a permutation in $S_n$, $n\geq 3$. We consider two $3$-tuples $\bar{\sigma}=[1,2,3]$ and $\bar{\sigma}'=[j_3,j_2,j_1]$, $j_1,j_2,j_3 \leq n$. Clearly, $g_{\bar{\sigma}'} \in Aut\{j_3,j_2,j_1\}$ and $g_{\bar{\sigma}} \in Aut\{1,2,3\}$. But, we want to compare the two permutations in the same permutation group $S_3=Aut\{1,2,3\}$. Hence, we have to fix a bijective map between $\{1,2,3\}$ and $\{j_3,j_2,j_1\}$. We construct the map by sending the largest integer in $\{j_3,j_2,j_1\}$ to $3$, smallest one to $1$ and the last one to $2$. This map will induce an isomorphism $\o: Aut\{j_3,j_2,j_1\} \rightarrow Aut\{1,2,3\}$. Hence, by an abuse of notations, $g_{\bar{\sigma}'} \in S_3$ means $\o (g_{\bar{\sigma}'}) \in S_3$.
\end{remark}

\begin{definition}\label{30009}
Let $\alpha$ be a permutation in $S_n$ and let $\bar{\sigma} \in \bar{C}_{n,3}$. We say $(\alpha,\bar{\sigma})$ is of type $i$, if $\alpha$ and $\bar{\sigma}$ corresponds to Case (i) in Construction \ref{3009}, $1 \leq i \leq 6$.
\end{definition}

Let $\omega=(j_d \text{ } ... \text{ } j_2 \text{ } j_1)$ be a $d$-cycle in $S_n$ (or a $d$-tuple $[j_d , ... ,j_2,j_1]$) and $\alpha=\alpha_1...\alpha_l$ be any permutation in $S_n$, where $\alpha_1...\alpha_l$ is the unique product of disjoint cycles. The following set $\mathcal{L}_i$ for fixed integer $i$, $1 \leq i \leq d$,
\begin{align*}
\mathcal{L}_i=\{l \mid \alpha^{l}(j_i) \text{ is any }j_{k}, 1 \leq k \leq d, l \geq 1\},
\end{align*}
is nonempty, because $\alpha^{n!}$ is the identity map on the set $\{1,...,n\}$, so $\alpha^{n!}(j_i)=j_i$ implies that $n!$ is contained in this set.

\begin{definition}\label{3010}
We define the "distance" between $j_i$ and the set $\{j_1,...,j_d\}$ with respect to the permutation $\alpha$ as
\begin{align*}
dist(j_i,\alpha,j_1,j_2,...,j_d) = min (\mathcal{L}_i).
\end{align*}
\end{definition}

\begin{example}\label{3011}
We give some examples about the definition above. Consider Case (5) in Construction \ref{3009},
\begin{align*}
\omega=(j_3 \text{ } j_2 \text{ } j_1), \quad \alpha=(j_1...j_2...j_3...)\alpha_2...\alpha_l,
\end{align*}
where $\alpha_1=(j_1...j_2...j_3...)$. $dist(j_3,\alpha,j_1,j_2,j_3)$ is the "distance" between $j_3$ and $j_1$ in the cycle $\alpha_1$, because $j_1$ is the first element in $\{j_1.j_2,j_3\}$ after $j_3$ under the action of $\alpha$. Similarly, $dist(j_2,\alpha,j_1,j_2,j_3)$ is the "distance" between $j_2$ and $j_3$. Clearly, $\sum_{1 \leq i \leq 3}dist(j_i,\alpha,j_1,j_2,j_3)$ is the length of the cycle $\alpha_1$.

Now, let's consider Case (1) in Construction \ref{3009},
\begin{align*}
\alpha=(j_1...)(j_2...)(j_3...)\alpha_4...\alpha_l.
\end{align*}
In this case, $dist(j_i,\alpha,j_1,j_2,j_3)$ is the length of the disjoint cycle containing $j_i$.
\end{example}

\begin{remark}\label{3012}
$\alpha,\omega$ are permutations in $S_n$, where $\omega$ is a $d$-cycle $(j_d \text{ }...\text{ } j_1)$. Let $\alpha'=\omega \alpha$. Then, we have
\begin{align*}
dist(j_i,\alpha,j_1,...,j_d)=dist(j_i,\alpha',j_1,...,j_d), \quad 1 \leq i \leq d.
\end{align*}
This property comes from the calculation in Construction \ref{3009}.
\end{remark}

\begin{definition}\label{311}
Given any permutation $\alpha \in S_n$, we define the map
\begin{align*}
& I_{\alpha,n,3}: \bar{C}_{n,3} \rightarrow \mathbb{Z}^{3}_{> 0}, \\
& I_{\alpha,n,3}([j_3,j_2,j_1])=(i_3,i_2,i_1),
\end{align*}
where $i_k=dist(j_k,\alpha,j_1,j_2,j_3), 1 \leq k \leq 3$.
\end{definition}

\begin{definition}\label{3013}
Let $\alpha$ be a permutation in $S_n$ and let $i_k$ be positive integers, $1 \leq k \leq 3$. $m$ is a positive integer such that $1 \leq m \leq 6$. Define the subset $\bar{C}^m_{n,3}(\alpha,i_3,i_2,i_1)$ of $\bar{C}_{n,3}$ as
\begin{align*}
\bar{C}^m_{n,3}(\alpha,i_3,i_2,i_1)=\{[j_3,j_2,j_1] \mid  i_k=dist(j_k,\alpha,j_1,j_2,j_3), 1 \leq k \leq 3, \\ (\alpha,[j_3,j_2,j_1]) \text{ is of type } m \}.
\end{align*}
\end{definition}

\begin{remark}\label{3014}
Let $\alpha$ be a permutation in $S_n$. We have
\begin{align*}
\bar{C}_{n,3}=\bigcup_{m=1}^6 \bigcup_{i_1,i_2,i_3 \geq 1} \bar{C}^m_{n,3}(\alpha,i_3,i_2,i_1).
\end{align*}
Given any $3$-tuple $[j_3,j_2,j_1]$, the "distance" $dist(j_i,\alpha,j_1,...,j_3)$ and the type of $(\alpha,[j_3,j_2,j_1])$ are uniquely determined. Hence, $\bigcup_{m=1}^6 \bigcup_{i_1,i_2,i_3 \geq 1} \bar{C}^m_{n,3}(\alpha,i_3,i_2,i_1)$ is a disjoint union. Also, there are only finitely many nonempty sets $\bar{C}_{n,3}(\alpha,i_3,i_2,i_1)$ in the above union.
\end{remark}

\begin{lemma}\label{3015}
Let $\alpha$ be a permutation in $S_n$ and let $i_1,i_2,i_3$ be three positive integers. We have the following formula
\begin{align*}
&\Phi(\sum_{[j_3,j_2,j_1] \in \bar{C}^1_{n,3}(\alpha,i_3,i_2,i_1)} [j_3,j_2,j_1] \alpha) =i_1i_2i_3 p_{i_1+i_2+i_3}\frac{\partial^3\Phi(\alpha)}{\partial p_{i_1} \partial p_{i_2}\partial p_{i_3}}\\
&=\hat{p}_{\phi((1)(2)(3))}(i_1,i_2,i_3) \frac{\partial}{\partial \hat{p}_{(1)(2)(3)}}(i_1,i_2,i_3)\Phi(\alpha),
\end{align*}
\begin{align*}
& \Phi(\sum_{[j_3,j_2,j_1] \in \bar{C}^2_{n,3}(\alpha,i_3,i_2,i_1)} [j_3,j_2,j_1] \alpha)=i_1(i_2+i_3)p_{i_1+i_3}p_{i_2}\frac{\partial^2\Phi(\alpha)}{\partial p_{i_1} \partial p_{i_2+i_3}}\\
&=\hat{p}_{\phi((1)(2 \text{ }3))}(i_1,i_2,i_3) \frac{\partial}{\partial \hat{p}_{(1)(2 \text{ }3)}}(i_1,i_2,i_3)\Phi(\alpha),
\end{align*}
\begin{align*}
& \Phi(\sum_{[j_3,j_2,j_1] \in \bar{C}^3_{n,3}(\alpha,i_3,i_2,i_1)} [j_3,j_2,j_1] \alpha)=i_2(i_1+i_3)p_{i_1+i_2}p_{i_3}\frac{\partial^2\Phi(\alpha)}{\partial p_{i_2} \partial p_{i_1+i_3}}\\
&=\hat{p}_{\phi((2)(1 \text{ }3))}(i_1,i_2,i_3) \frac{\partial}{\partial \hat{p}_{(2)(1 \text{ }3)}}(i_1,i_2,i_3)\Phi(\alpha),
\end{align*}
\begin{align*}
& \Phi(\sum_{[j_3,j_2,j_1] \in \bar{C}^4_{n,3}(\alpha,i_3,i_2,i_1)} [j_3,j_2,j_1] \alpha)=i_3(i_1+i_2)p_{i_3+i_2}p_{i_1}\frac{\partial^2\Phi(\alpha)}{\partial p_{i_3} \partial p_{i_1+i_2}}\\
&=\hat{p}_{\phi((3)(1 \text{ }2))}(i_1,i_2,i_3) \frac{\partial}{\partial \hat{p}_{(3)(1 \text{ }2)}}(i_1,i_2,i_3)\Phi(\alpha),
\end{align*}
\begin{align*}
& \Phi(\sum_{[j_3,j_2,j_1] \in \bar{C}^5_{n,3}(\alpha,i_3,i_2,i_1)} [j_3,j_2,j_1] \alpha)=(i_1+i_2+i_3)p_{i_1}p_{i_2}p_{i_3} \frac{\partial\Phi(\alpha)}{\partial p_{i_1+i_2+i_3}}\\
&=\hat{p}_{\phi((1\text{ }2 \text{ }3))}(i_1,i_2,i_3) \frac{\partial}{\partial \hat{p}_{(1 \text{ }2 \text{ }3)}}(i_1,i_2,i_3)\Phi(\alpha),
\end{align*}
\begin{align*}
&\Phi(\sum_{[j_3,j_2,j_1] \in \bar{C}^6_{n,3}(\alpha,i_3,i_2,i_1)} [j_3,j_2,j_1] \alpha)=(i_1+i_2+i_3)p_{i_1+i_2+i_3}\frac{\partial\Phi(\alpha)}{\partial p_{i_1+i_2+i_3}})\\
&=\hat{p}_{\phi((3 \text{ }2 \text{ }1))}(i_1,i_2,i_3) \frac{\partial}{\partial \hat{p}_{(3\text{ }2 \text{ }1)}}(i_1,i_2,i_3)\Phi(\alpha).
\end{align*}
\end{lemma}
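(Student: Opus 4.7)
The six identities share a common template indexed by the element $\delta \in S_3$ assigned to case $m$ (namely $\delta = (1)(2)(3), (1)(2\,3), (2)(1\,3), (3)(1\,2), (1\,2\,3), (1\,3\,2)$ for $m = 1, \dots, 6$), so the plan is to prove them in parallel. For fixed $\alpha \in S_n$ and fixed positive integers $i_1, i_2, i_3$, I will show (a) that $\Phi([j_3, j_2, j_1]\alpha)$ takes a common value as $[j_3, j_2, j_1]$ ranges over $\bar{C}^m_{n,3}(\alpha, i_3, i_2, i_1)$, and (b) that multiplying this common value by the cardinality of the set produces $\hat{p}_{\phi(\delta)}(i_1, i_2, i_3) \cdot \frac{\partial}{\partial \hat{p}_\delta}(i_1, i_2, i_3) \Phi(\alpha)$.

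For (a), Construction \ref{3009} already computes the rearrangement of cycles induced by left multiplication by $(j_3\ j_2\ j_1)$. In case $m$ the integers $j_1, j_2, j_3$ sit inside the disjoint cycles of $\alpha$ with the same incidence pattern as $1, 2, 3$ inside the disjoint cycles of $\delta$; combined with the distance conditions $\mathrm{dist}(j_k, \alpha, j_1, j_2, j_3) = i_k$, this forces the cycle of $\alpha$ containing the $j$'s indexed by a cycle $\delta_k$ of $\delta$ to have length $\sum_{j \in \delta_k} i_j$. By Remark \ref{3012} the distances are preserved after left multiplication by $(j_3\ j_2\ j_1)$, and Construction \ref{3009} shows that the arcs of length $i_k$ between consecutive $j$'s are redistributed into new cycles of lengths $\sum_{j \in \delta'_k} i_j$, where the $\delta'_k$ are the disjoint cycles of $\phi(\delta) = (3\ 2\ 1)\delta$. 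Since $\Phi$ depends only on the cycle type, this yields the common value $\Phi([j_3, j_2, j_1]\alpha) = \hat{p}_{\phi(\delta)}(i_1, i_2, i_3) \cdot \Phi(\alpha) / \hat{p}_\delta(i_1, i_2, i_3)$.

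For (b), I count $|\bar{C}^m_{n,3}(\alpha, i_3, i_2, i_1)|$ combinatorially: for each cycle $\delta_k$ of $\delta$, first choose a cycle of $\alpha$ of length $\sum_{j \in \delta_k} i_j$ (the cycles chosen for distinct $\delta_k$ must themselves be distinct), contributing the multiplicity $m_{\sum_{j \in \delta_k} i_j}$ (with a falling-factorial adjustment when several $\delta_k$ share the same total length); then choose the position of a distinguished $j$ inside that cycle, the other $j$'s being forced by the prescribed distances, which contributes the length factor $\sum_{j \in \delta_k} i_j$. This product of counts is exactly what the operator $\frac{\partial}{\partial \hat{p}_\delta}(i_1, i_2, i_3) = \prod_k \bigl(\sum_{j \in \delta_k} i_j\bigr) \frac{\partial}{\partial p_{\sum_{j \in \delta_k} i_j}}$ extracts from $\Phi(\alpha) = \prod_\ell p_\ell^{m_\ell}$. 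Multiplying by the ``new cycle'' factor $\hat{p}_{\phi(\delta)}(i_1, i_2, i_3)$ then reassembles the right-hand side of each of the six identities.

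The main obstacle I expect is the bookkeeping when the lengths $\sum_{j \in \delta_k} i_j$ coincide for distinct $k$, or when $\alpha$ itself has several cycles of the same length; in these degenerate subcases the falling-factorial factors produced by $\partial^r/\partial p_k^r$ acting on $p_k^{m_k}$ must be matched carefully with the number of ordered choices of distinct cycles of $\alpha$ of the prescribed lengths. I plan to carry out case (1), where the three derivatives are in distinct variables, and case (4), where one derivative factor is of second order, in full detail; the remaining four cases follow by the same argument with only the shapes of $\delta$ and $\phi(\delta)$ changed.
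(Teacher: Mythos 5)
Your proposal takes essentially the same route as the paper: establish that $\Phi([j_3,j_2,j_1]\alpha)$ is constant on each set $\bar{C}^m_{n,3}(\alpha,i_3,i_2,i_1)$ (the paper's Remark \ref{30002}), count the cardinality of that set (the paper's Lemma \ref{30001}), and match the product against the multiplicities that $\frac{\partial}{\partial \hat{p}_\delta}(i_1,i_2,i_3)$ extracts from $\Phi(\alpha)$. If anything, your explicit falling-factorial bookkeeping for coinciding lengths is more careful than the paper, whose stated count $\prod_{v=1}^{3}c_v i_v$ in Lemma \ref{30001} implicitly assumes the relevant cycle lengths are distinct, even though the final identity still holds because the same falling factorials appear on the derivative side.
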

We only give the proof of the first formula
\begin{align*}
\Phi(\sum_{[j_3,j_2,j_1] \in \bar{C}^1_{n,3}(\alpha,i_3,i_2,i_1)} [j_3,j_2,j_1] \alpha)=i_1i_2i_3 p_{i_1+i_2+i_3}\frac{\partial^3\Phi(\alpha)}{\partial p_{i_1} \partial p_{i_2}\partial p_{i_3}}.
\end{align*}
The other formulas can be proved similarly. Before we give the proof, we first prove some lemmas.

\begin{lemma}\label{30001}
Let $\alpha$ be a permutation in $S_n$ and let $i_1,i_2,i_3$ be three positive integers. $c_v$ is the number of disjoint cycles with length $i_v$ in $\alpha$. The number of elements in $\bar{C}^1_{n,3}(\alpha,i_3,i_2,i_1)$ is $\prod^3_{v=1}c_v i_v$.
\end{lemma}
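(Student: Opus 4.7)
The lemma is purely combinatorial: once the defining conditions are translated into statements about the cycle decomposition of $\alpha$, the count falls out by direct enumeration.

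First I would unpack the two conditions defining $\bar{C}^1_{n,3}(\alpha,i_3,i_2,i_1)$. Construction~\ref{3009}, Case~(1), makes the type-$1$ condition explicit: $(\alpha,[j_3,j_2,j_1])$ is of type~$1$ iff $j_1$, $j_2$, and $j_3$ appear in three pairwise distinct disjoint cycles of $\alpha$. In this configuration Example~\ref{3011} identifies $\mathrm{dist}(j_k,\alpha,j_1,j_2,j_3)$ with the length of the disjoint cycle of $\alpha$ containing $j_k$. So requiring $\mathrm{dist}(j_k,\alpha,j_1,j_2,j_3)=i_k$ is equivalent to requiring the cycle of $\alpha$ through $j_k$ to have length $i_k$.

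Under this reformulation, an element of $\bar{C}^1_{n,3}(\alpha,i_3,i_2,i_1)$ is the same data as a choice, for each $k\in\{1,2,3\}$, of a disjoint cycle $A_k$ of $\alpha$ of length $i_k$ together with an element $j_k\in A_k$, with the cycles $A_1,A_2,A_3$ pairwise distinct. Since $\alpha$ has $c_v$ disjoint cycles of length $i_v$ and each such cycle contains $i_v$ elements, the pair $(A_k,j_k)$ has $c_v i_v$ possibilities when considered in isolation; multiplying over $k=1,2,3$ suggests the count $\prod_{v=1}^{3} c_v i_v$.

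The main obstacle that I expect in making this rigorous is the distinctness constraint on $A_1,A_2,A_3$. If $i_1,i_2,i_3$ are pairwise distinct the cycles automatically have different lengths and the product $\prod_v c_v i_v$ is exact; if some $i_v$ coincide a naive product would overcount triples where two $A_k$ collide, and one must verify that the stated formula is recovered once $c_v$ is read as the number of cycles of length $i_v$ still available at the $v$-th selection (equivalently, by a standard falling-factorial adjustment when several $i_v$ agree). Once this bookkeeping is settled, the lemma follows immediately from the enumeration above.
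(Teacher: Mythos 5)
Your counting scheme is exactly the paper's: choose, for each $k$, a disjoint cycle of $\alpha$ of length $i_k$ (a factor $c_v$) and then an element $j_k$ inside it (a factor $i_v$), and multiply. The paper's proof does precisely this and asserts the product $\prod_{v=1}^{3}c_v i_v$ with no further comment. The obstacle you flag --- distinctness of the three chosen cycles when some of the $i_v$ coincide --- is genuine, and the paper's proof silently ignores it: when $i_1=i_2$ and $\alpha'_1=\alpha'_2$, the resulting tuple is either not in $\bar{C}_{n,3}$ at all (if $j_1=j_2$) or is not of type $1$ (if $j_1\neq j_2$ lie in the same cycle), so the paper's construction produces tuples outside the set and overcounts. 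However, your proposed repair --- that ``the stated formula is recovered once $c_v$ is read as the number of cycles of length $i_v$ still available at the $v$-th selection'' --- cannot close the gap, because under that reading the count is no longer $\prod_v c_v i_v$ with $c_v$ as defined in the lemma: the two quantities genuinely differ. Concretely, take $\alpha=(1)(2)(3)\in S_3$ and $i_1=i_2=i_3=1$. Every tuple $[j_3,j_2,j_1]\in\bar{C}_{3,3}$ is of type $1$ with all distances equal to $1$, so $|\bar{C}^1_{3,3}(\alpha,1,1,1)|=3!=6$, whereas $\prod_v c_v i_v=3\cdot3\cdot3=27$. So in the degenerate case the lemma as printed is simply false; the correct count replaces $c_1c_2c_3$ by the falling factorial over coinciding lengths ($3\cdot2\cdot1$ in the example), exactly as your ``standard falling-factorial adjustment'' suggests. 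Your proof is therefore incomplete at its final step, but the incompleteness reflects an error in the statement itself, which should either assume $i_1,i_2,i_3$ pairwise distinct or be restated with falling factorials.

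It is worth noting why the paper gets away with this downstream: in Lemma \ref{3015}, when $i_1=i_2$ the repeated derivative $\frac{\partial^2}{\partial p_{i_1}^2}$ applied to $p_{i_1}^{c}$ automatically produces the falling factorial $c(c-1)$ rather than $c^2$, so the identity $\Phi\bigl(\sum_{[j_3,j_2,j_1]}[j_3,j_2,j_1]\alpha\bigr)=i_1i_2i_3\,p_{i_1+i_2+i_3}\frac{\partial^3\Phi(\alpha)}{\partial p_{i_1}\partial p_{i_2}\partial p_{i_3}}$ remains true with the corrected cardinality (in the example above both sides equal $6p_3$). The paper's proof of Lemma \ref{3015} contains the mirror-image inaccuracy --- it claims the derivative contributes a coefficient $\prod_v c_v$ --- and the two errors cancel. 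So your instinct diagnoses a real flaw shared by Lemma \ref{30001} and the proof of Lemma \ref{3015}; to finish rigorously you should prove the falling-factorial count outright (distinct ordered choices of cycles, then elements within them) instead of asserting that the printed formula survives.
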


\begin{proof}
If there is no disjoint cycles with length $i_v$ of $\alpha$ for some $1\leq v \leq 3$, then $\bar{C}^1_{n,3}(\alpha,i_3,i_2,i_1)$ is empty. Also, since $c_v=0$, we have $\prod^3_{v=1}c_v i_v=0$. The statement is true in this special case.

Now we assume that there is at least one disjoint cycle with length $c_v$ in $\alpha$. We first pick disjoint cycle $\alpha'_v$ with length $i_v$ in $\alpha$, $1 \leq v \leq 3$. The number of the choices of $\alpha'_1,\alpha'_2,\alpha'_3$ is $\prod^3_{v=1}c_v$. After we pick three disjoint cycles $\alpha'_1,\alpha'_2,\alpha'_3$, we can pick any integer $j_v$ from $\alpha'_v$, $1 \leq v \leq 3$, and these three integers form a unique $3$-tuple $[j_3,j_2,j_1]$ in $\bar{C}^1_{n,3}(\alpha,i_3,i_2,i_1)$. We can construct $i_1 i_2 i_3$ many $3$-tuples in $\bar{C}^1_{n,3}(\alpha,i_3,i_2,i_1)$ from these three disjoint cycles $\alpha'_1,\alpha'_2,\alpha'_3$. In this way, we can construct $\prod^3_{v=1}c_v i_v$ many $3$-tuples in $\bar{C}^1_{n,3}(\alpha,i_3,i_2,i_1)$. It is easy to prove that they are all elements in $\bar{C}^1_{n,3}(\alpha,i_3,i_2,i_1)$.
\end{proof}

\begin{remark}\label{30002}
Let $\alpha$ be a permutation in $S_n$ and let $\bar{\sigma}$ be an element in the set $\bar{C}^1_{n,3}(\alpha,i_3,i_2,i_1)$. We use the same notations for $\alpha$ and $\bar{\sigma}$ as in Lemma \ref{3009}, i.e.
\begin{align*}
\alpha=(j_1...)(j_2...)(j_3...)\alpha_4...\alpha_l, \quad \bar{\sigma}=[j_3,j_2,j_1].
\end{align*}
Also, by definition we have
\begin{align*}
    i_k=dist(j_k,\alpha,j_1,j_2,j_3), \quad 1 \leq k \leq 3.
\end{align*}
We assume the lengths of disjoint cycles $\alpha_v$, $4 \leq v \leq l$, are not $i_1,i_2,i_3$. By simple calculations, we have
\begin{align*}
    & \alpha=(j_1...)(j_2...)(j_3...)\rho_4...\rho_l & \rightarrow & \quad \bar{\sigma}\alpha=(j_3...j_2...j_1...)\rho_4...\rho_l\\
    & \Phi(\alpha)=p_{i_1} p_{i_2} p_{i_3}\Phi(\rho_4...\rho_l)& \rightarrow & \quad \Phi(\bar{\sigma}\alpha)=p_{i_1+i_2+i_3}\Phi(\rho_4...\rho_l) \text{ ,}
\end{align*}
and
\begin{align*}
    p_{i_1+i_2+i_3}\frac{\partial^3}{\partial p_{i_1}\partial p_{i_2}\partial p_{i_3}} \Phi(\alpha) = \Phi(\bar{\sigma}\alpha).
\end{align*}
Clearly, for any element $\bar{\sigma}'$ in $\bar{C}^1_{n,3}(\alpha,i_3,i_2,i_1)$, we have
\begin{align*}
\Phi(\bar{\sigma}'\alpha)=\Phi(\bar{\sigma}\alpha),
\end{align*}
which means
\begin{align*}
p_{i_1+i_2+i_3}\frac{\partial^3}{\partial p_{i_1}\partial p_{i_2}\partial p_{i_3}}\Phi(\alpha) = \Phi(\bar{\sigma}\alpha)=\Phi(\bar{\sigma}'\alpha).
\end{align*}
\end{remark}

Now we give the proof of Lemma \ref{3015}.
\begin{proof}
If $\bar{C}^1_{n,3}(\alpha,i_3,i_2,i_1)$ is empty, we assume that there is no disjoint cycle with length $i_1$ in $\alpha$. We have
\begin{align*}
\Phi(\sum_{[j_3,j_2,j_1] \in \bar{C}^1_{n,3}(\alpha,i_3,i_2,i_1)} [j_3,j_2,j_1] \alpha)=0.
\end{align*}
Since there is no disjoint cycle with length $i_1$ in $\alpha$, we have
\begin{align*}
\frac{\partial \Phi(\alpha)}{\partial p_{i_1}}=0.
\end{align*}
Hence, the equation holds
\begin{align*}
0=\Phi(\sum_{[j_3,j_2,j_1] \in \bar{C}^1_{n,3}(\alpha,i_3,i_2,i_1)} [j_3,j_2,j_1] \alpha)=i_1i_2i_3 p_{i_1+i_2+i_3}\frac{\partial^3\Phi(\alpha)}{\partial p_{i_1} \partial p_{i_2}\partial p_{i_3}}=0.
\end{align*}

Now we assume there is at least one disjoint cycle with length $i_v$ in $\alpha$. The number of disjoint cycles with length $i_v$ in $\alpha$ is $c_v$. By Lemma \ref{30001}, we know the number of elements in $\bar{C}^1_{n,3}(\alpha,i_3,i_2,i_1)$ is $\prod^3_{v=1}c_v i_v$. By Lemma \ref{30001} and Remark \ref{30002}, we have
\begin{align*}
( \sum_{[j_3,j_2,j_1] \in \bar{C}^1_{n,3}(\alpha,i_3,i_2,i_1)} [j_3,j_2,j_1] \alpha )=(\prod^3_{v=1}c_v i_v) \Phi(\alpha'),
\end{align*}
where $\alpha'=[j_3,j_2,j_1]\alpha$ for some $[j_3,j_2,j_1] \in \bar{C}^1_{n,3}(\alpha,i_3,i_2,i_1)$. By Remark \ref{30002}, we know $\Phi([j_3,j_2,j_1]\alpha)$ does not depend on the choice of $[j_3,j_2,j_1]$ in $\bar{C}^1_{n,3}(\alpha,i_3,i_2,i_1)$. By assumption, there are $c_v$ disjoint cycles with length $i_v$ in $\alpha$, it means that the order of $p_{i_v}$ in the monomial $\Phi(\alpha)$ is $c_v$. So, when we calculate $\frac{\partial \Phi(\sigma_v)}{\partial p_{i_v}}$, we will have a coefficient $c_v$, i.e.
\begin{align*}
p_{i_1+i_2+i_3} \frac{\partial^3 \Phi(\alpha)}{\partial p_{i_1}\partial p_{i_2}\partial p_{i_3}}=(\prod^3_{v=1}c_v) \Phi(\alpha').
\end{align*}
So, we have
\begin{align*}
\Phi(\sum_{[j_3,j_2,j_1] \in \bar{C}^1_{n,3}(\alpha,i_3,i_2,i_1)} [j_3,j_2,j_1] \alpha)=i_1i_2i_3 p_{i_1+i_2+i_3}\frac{\partial^3\Phi(\alpha)}{\partial p_{i_1} \partial p_{i_2}\partial p_{i_3}}.
\end{align*}
\end{proof}
	
Now we are ready to prove the theorem.
\begin{theorem}\label{313}
Let $g$ be an element in $\mathbb{C}S_n$. We have
	\begin{align*}
		3 \Phi(K_{31^{n-3}}g)=\Phi(\bar{K}_{31^{n-3}}g)=3\Delta_3 \Phi(g).
	\end{align*}
\end{theorem}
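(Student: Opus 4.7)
The proof breaks naturally into two equalities. For the first equality $3\Phi(K_{1^{n-3}3}g) = \Phi(\bar{K}_{1^{n-3}3}g)$, the plan is to observe that the projection $\pi \colon \mathbb{C}\bar{C}_{n,3} \to \mathbb{C}C_{n,3}$ is $3$-to-$1$ on basis elements, since the $3$ cyclic shifts of a tuple $[j_3,j_2,j_1]$ all map to the same $3$-cycle $(j_3\,j_2\,j_1)$. Hence $\pi(\bar{K}_{1^{n-3}3}) = 3K_{1^{n-3}3}$ in $\mathbb{C}S_n$, and applying $\Phi$ (using the action $\bar{\sigma}g = \pi(\bar{\sigma})g$ from Definition \ref{39}) gives the first equality directly.

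For the second equality, by linearity of $\Phi$, $\Delta_3$, and the action, it suffices to treat the case $g = \alpha \in S_n$. The key idea is to use the disjoint decomposition from Remark \ref{3014},
\begin{align*}
\bar{C}_{n,3} = \bigsqcup_{m=1}^{6} \bigsqcup_{i_1,i_2,i_3 \geq 1} \bar{C}^m_{n,3}(\alpha, i_3, i_2, i_1),
\end{align*}
to partition the sum $\bar{K}_{1^{n-3}3}\alpha = \sum_{\bar{\sigma}\in\bar{C}_{n,3}} \bar{\sigma}\alpha$ according to type $m$ (one of the six cases in Construction \ref{3009}, corresponding to the six elements of $S_3$) and according to the distances $(i_3,i_2,i_1)$.

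Applying $\Phi$ and invoking Lemma \ref{3015} to each of the six inner sums then converts the group-ring expression into a differential operator acting on $\Phi(\alpha)$. Specifically, each type $m$ contributes exactly the term $\hat{p}_{\phi(\delta_m)}(i_1,i_2,i_3)\,\tfrac{\partial}{\partial \hat{p}_{\delta_m}}(i_1,i_2,i_3)\,\Phi(\alpha)$, where $\delta_m$ runs through $(1)(2)(3), (1)(2\,3), (2)(1\,3), (3)(1\,2), (1\,2\,3), (1\,3\,2)$ — i.e., all of $S_3$. Summing over $m$ and over $i_1,i_2,i_3 \geq 1$ yields
\begin{align*}
\Phi(\bar{K}_{1^{n-3}3}\alpha) = \sum_{\delta \in S_3}\sum_{i_1,i_2,i_3 \geq 1} \hat{p}_{\phi(\delta)}(i_1,i_2,i_3)\,\frac{\partial}{\partial \hat{p}_\delta}(i_1,i_2,i_3)\,\Phi(\alpha),
\end{align*}
which is exactly $3\Delta_3 \Phi(\alpha)$ by Definition \ref{37}.

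The only real work has already been done in Lemma \ref{3015}; the present theorem is essentially the assembly of those six pieces via the type decomposition. The main point to be careful about is that the labeling of the six cases in Construction \ref{3009} is compatible with the bijection $\phi\colon \delta \mapsto (3\,2\,1)\delta$ used in the definition of $\Delta_3$, which is already the convention under which Lemma \ref{3015} is stated. Finally, linearity in $g$ extends the equality from $S_n$ to all of $\mathbb{C}S_n$.
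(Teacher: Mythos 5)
Your proposal is correct and follows essentially the same route as the paper: the paper's proof likewise reduces to $g=\alpha\in S_n$, partitions $\bar{C}_{n,3}$ via the type-and-distance decomposition of Remark \ref{3014}, applies the six formulas of Lemma \ref{3015}, and assembles them into $3\Delta_3\Phi(\alpha)$ via Definition \ref{37}. Your explicit handling of the first equality through the $3$-to-$1$ projection $\pi$ of Definition \ref{39} is a small point the paper leaves implicit in this proof (spelling it out only in the general case, Theorem \ref{315}), but it is the same reasoning.
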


\begin{proof}
We assume that $g$ is a permutation in $S_n$, i.e. $g=\alpha \in S_n$. By Remark \ref{3014}, we have
\begin{align*}
\bar{C}_{n,3}=\bigcup_{m=1}^6 \bigcup_{i_1,i_2,i_3 \geq 1} \bar{C}^m_{n,3}(\alpha,i_3,i_2,i_1).
\end{align*}
Then, we get
\begin{align*}
\Phi(\bar{K}_{31^{n-3}}g)& =\Phi(\sum_{m=1}^6 \sum_{i_1,i_2,i_3 \geq 1}\sum_{[j_3,j_2,j_1] \in \bar{C}^i_{n,3}(\alpha,i_3,i_2,i_1)} [j_3,j_2,j_1] \alpha)\\
& = \sum_{i_1,i_2,i_3 \geq 1}
          (i_1i_2i_3 p_{i_1+i_2+i_3}\frac{\partial^3}{\partial p_{i_1} \partial p_{i_2}\partial p_{i_3}}\\
		+&i_1(i_2+i_3)p_{i_1+i_3}p_{i_2}\frac{\partial^2}{\partial p_{i_1} \partial p_{i_2+i_3}} \\
		+&i_2(i_1+i_3)p_{i_1+i_2}p_{i_3}\frac{\partial^2}{\partial p_{i_2} \partial p_{i_1+i_3}} \\
		+&i_3(i_1+i_2)p_{i_3+i_2}p_{i_1}\frac{\partial^2}{\partial p_{i_3} \partial p_{i_1+i_2}} \\
		+&(i_1+i_2+i_3)p_{i_1}p_{i_2}p_{i_3} \frac{\partial}{\partial p_{i_1+i_2+i_3}}\\
		+&(i_1+i_2+i_3)p_{i_1+i_2+i_3}\frac{\partial}{\partial p_{i_1+i_2+i_3}}) \Phi(g)\\
& = 3 \Delta_3 \Phi(g),
\end{align*}
where the second equality comes from Lemma \ref{3015} and the last equality comes from Definition \ref{37} or Example \ref{38}.

\end{proof}

We now give the extended definition of $\phi$ (Definition \ref{36}) and the construction of new differential operator if we choose arbitrary $d$-cycle (Definition \ref{37}).

\begin{definition}\label{3020}
	Given an $d$-cycle $\beta$ in $S_d$, we define the map $\phi_\beta: S_d \rightarrow S_d$ as
\begin{align*}
\phi_\beta(\delta)=\beta \delta, \quad \delta \in S_d.
\end{align*}
Then, we construct $\Delta_\beta$, which is similar to $\Delta_d$ in Definition \ref{37},
\begin{align*}
		\Delta_\beta =\frac{1}{d} \sum_{\delta \in S_d}\sum_{a_1,...,a_d \geq 1} \hat{p}_{\phi_\beta(\delta)}(a_1,...,a_d) \frac{\partial}{\partial \hat{p}_\delta}(a_1,...,a_d),
	\end{align*}
where we replace $\phi$ by $\phi_\beta$.
\end{definition}

\begin{remark}\label{3021}
	From this definition, it is clear $\Delta_{(3 2 1)}=\Delta_3$.
\end{remark}

\begin{remark}\label{3022}
Recall the first formula in Lemma \ref{3015},
\begin{align*}
i_1 i_2 i_3 \Phi([j_3,j_2,j_1] \alpha)
=\hat{p}_{\phi((1)(2)(3))}(i_1,i_2,i_3) \frac{\partial}{\partial \hat{p}_{(1)(2)(3)}}(i_1,i_2,i_3)\Phi(\alpha).
\end{align*}
Similarly, we can prove
\begin{align*}
i_1 i_2 i_3 \Phi([j_1,j_2,j_3] \alpha)
=\hat{p}_{\phi_\beta((1)(2)(3))}(i_1,i_2,i_3) \frac{\partial}{\partial \hat{p}_{(1)(2)(3)}}(i_1,i_2,i_3)\Phi(\alpha),
\end{align*}
where $\beta=(1 \text{ } 2 \text{ } 3)$. Indeed, the map $\phi_\beta$ corresponds to tuple $[j_1,j_2,j_3]$. We can prove the other formulas in Lemma \ref{3015} similarly.
\end{remark}

\begin{corollary}\label{3033}
	For any $3$-cycle $\beta$, $\Delta_3=\Delta_\beta$ as operators on the ring $\mathbb{C}[p_1,p_2,...]$.
\end{corollary}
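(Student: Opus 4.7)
The plan is to bootstrap from Theorem \ref{313} by showing that the identity proved there is not particular to the cycle $(3\,2\,1)$ used to define $\Delta_3$, but holds with $\Delta_3$ replaced by $\Delta_\beta$ for any $3$-cycle $\beta \in S_3$. Since the left-hand side $\Phi(\bar{K}_{31^{n-3}}g)$ is independent of any such choice, this forces $\Delta_\beta = \Delta_3$ on the image of $\Phi$, and then on all of $\mathbb{C}[p_1,p_2,\dots]$.

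Concretely, the first step is to upgrade Remark \ref{3022} from the single case $\delta = (1)(2)(3)$ and $\beta = (1\,2\,3)$ to all six $\delta \in S_3$ and both $3$-cycles $\beta$. The set $\bar{C}_{n,3}$ consists of ordered triples of distinct integers; in the proof of Theorem \ref{313} the author labels each triple as $[j_3,j_2,j_1]$ and sorts them by type via Construction \ref{3009} using the $3$-cycle $(3\,2\,1)$. Given another $3$-cycle $\beta$, I would relabel triples as $[j_{\beta^{-1}(3)}, j_{\beta^{-1}(2)}, j_{\beta^{-1}(1)}]$ (which is a bijection of $\bar{C}_{n,3}$ with itself), redefine the type decomposition $\bar{C}_{n,3}^m(\alpha,i_3,i_2,i_1)$ accordingly, and rerun Lemma \ref{3015} verbatim. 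The case analysis (distance counts, monomial counts, cycle-merging and cycle-splitting differentiations) is insensitive to the labeling convention; only the final identification of which $\delta \in S_3$ corresponds to each case changes, and the bookkeeping exactly produces $\hat{p}_{\phi_\beta(\delta)}\frac{\partial}{\partial \hat{p}_\delta}$ in place of $\hat{p}_{\phi(\delta)}\frac{\partial}{\partial \hat{p}_\delta}$.

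With this upgraded lemma in hand, the second step mimics the proof of Theorem \ref{313}: summing over $m$ and over all positive $i_1,i_2,i_3$, and using the disjoint union of Remark \ref{3014}, yields
\[
\Phi(\bar{K}_{31^{n-3}}\alpha) = 3\Delta_\beta\,\Phi(\alpha)
\]
for every $\alpha \in S_n$, hence by linearity for every $g \in \mathbb{C}S_n$ and every $n \geq 3$. Combined with Theorem \ref{313}, we get $\Delta_3\,\Phi(g) = \Delta_\beta\,\Phi(g)$ for every such $g$. Since every monomial $p_\lambda$ equals $\Phi(\sigma)/|K_\lambda|\cdot|K_\lambda|$ for $\sigma$ of type $\lambda$ (and the $p_\lambda$ span $\mathbb{C}[p_1,p_2,\dots]$), the operators $\Delta_3$ and $\Delta_\beta$ agree as differential operators on the full polynomial ring.

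The only real obstacle is the bookkeeping in step one: one has to make sure that relabeling the triple coordinates by $\beta$ permutes the six types of Construction \ref{3009} in the correct way, so that the assignment $m \mapsto \delta_m$ is twisted by $\beta$ on both the $\hat{p}$ factor and the $\partial/\partial \hat{p}$ factor consistently. I would spot-check this on the two critical cases (the $3$-cycle type and the identity type), as in Remark \ref{3022}, and then remark that the remaining cases are formally identical. Everything else is a direct translation of the already-established Theorem \ref{313}.
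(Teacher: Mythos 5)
Your proposal is correct and takes essentially the same route as the paper: the paper's proof likewise observes that $\Phi(\bar{K}_{31^{n-3}}g)$ is independent of how the tuples in $\bar{C}_{n,3}$ are labeled, invokes the $\beta$-twisted analogue of Lemma \ref{3015} (Remark \ref{3022}) to identify the relabeled sum with $3\Delta_\beta\Phi(g)$, and concludes $\Delta_\beta\Phi(g)=\frac{1}{3}\Phi(\bar{K}_{31^{n-3}}g)=\Delta_3\Phi(g)$ via Theorem \ref{313}, extending to all of $\mathbb{C}[p_1,p_2,\dots]$ since the monomials $p_\lambda$ lie in the image of $\Phi$. The bookkeeping you flag in your first step is precisely what the paper defers to Remark \ref{3022} with the statement that the remaining formulas of Lemma \ref{3015} are proved similarly.
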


\begin{proof}
	Let $\beta=(1 \text{ }2 \text{ }3)$. We have
	\begin{align*}
		\Delta_3 \Phi(g)=&\frac{1}{3} \Phi(\sum_{[j_3,j_2,j_1] \in \bar{C}_{n,3}} [j_3,j_2,j_1]g)\\
		&=\Phi(\bar{K}_{31^{n-3}}g)\\
		&=\Phi(\sum_{[j_1,j_2,j_3] \in \bar{C}_{n,3}} [j_1,j_2,j_3]g)\\
		&=\Delta_\beta \Phi(g),
	\end{align*}
where the last equality comes from Remark \ref{3022}.
	  	
	Hence, $\Delta_\beta=\Delta_3$ as operators on $\mathbb{C}[p_1,p_2,...]$.
\end{proof}

\begin{remark}
	The above argument can be extended to $\Delta_d$, $d \geq 4$, i.e., for any $d$-cycle $\beta$, $\Delta_\beta=\Delta_d$. This will be shown in Corollary \ref{317}.
\end{remark}

\subsection{General Case}
The proof of the general case is very similar to the case $d=3$. First, we generalize Construction \ref{3009}, Definition \ref{311} and \ref{3013}.

\begin{construction}\label{3050}
Let $\bar{\sigma}=[j_d,...,j_1] \in \bar{C}_{n,d}$. We want to classify all permutations $g \in S_n$ according to the occurrence of $j_1,...,j_d$ in the disjoint cycles appearing in $g$. Restrict $g$ to a permutation  $g_{\bar{\sigma}}$ in $S_d$ by forgetting all digits except for $j_1,...,j_d$ but preserving the cycle structure (similar to the construction of $g_{\bar{\sigma}}$ in Construction \ref{3009}). There are $d!$ possible choices for $g_{\bar{\sigma}}$, each of which corresponds to a permutation in $S_d$. By an abuse of the notation, $S_d$ is the permutation group of $\{1,...,d\}$ (Recall Remark \ref{30008} and see Notation \ref{3049}). We say $(g,\bar{\sigma})$ is of type $\tau \in S_d$, if $\tau=g_{\bar{\sigma}} \in S_d$. Clearly, for any element $g \in S_n$, it falls into one and only one case with respect to $\bar{\sigma}$.
\end{construction}

We want to explain the notation $\tau=g_{\bar{\sigma}} \in S_d$ in the above construction.

\begin{notation}\label{3049}
Let $g$ be a permutation in $S_n$, $n\geq d$. We consider two $d$-tuples $\bar{\sigma}=[d,d-1,...,2,1]$ and $\bar{\sigma}'=[j_d,...,j_1]$ in $\bar{C}_{n,d}$. Clearly, $g_{\bar{\sigma}'} \in Aut\{j_d,...,j_1\}$ and $g_{\bar{\sigma}} \in S_d=Aut\{1,2,...,d\}$. But, we want to compare the two permutations in the same permutation group $S_d=Aut\{1,2...,d\}$. Recall the construction in Remark \ref{30008}. Similarly, we construct the bijective map between $\{1,...,d\}$ and $\{j_1,...,j_d\}$ with respect to the order of the integers, which means small integer maps to the small one and larger integer goes to larger one. This map induces an isomorphism $\o: Aut\{j_d,...,j_1\} \rightarrow Aut\{1,...,d\}$. Hence, by an abuse of notations, $g_{\bar{\sigma}'} \in S_d$ means $\o (g_{\bar{\sigma}'}) \in S_d$.
\end{notation}

\begin{definition}\label{3051}
Given any permutation $\alpha \in S_n$ and a positive integer $d$ such that $d \leq n$, we define the map
\begin{align*}
& I_{\alpha,n,d}: \bar{C}_{n,d} \rightarrow \mathbb{Z}^{3}_{> 0}, \\
& I_{\alpha,n,d}([j_d,...,j_1])=(i_d,...,i_1),
\end{align*}
where $i_k=dist(j_k,\alpha,j_1,...,j_d), 1 \leq k \leq d$.
\end{definition}

\begin{definition}\label{3052}
Let $\alpha$ be a permutation in $S_n$. Let $d$ be a positive integer such that $d \leq n$. $i_k$ are positive integers, $1 \leq k \leq d$. Let $\tau$ be a permutation in $S_d$. We define the subset $\bar{C}^{\tau}_{n,d}(\alpha,i_1,...,i_d)$ of $\bar{C}_{n,d}$ as
\begin{align*}
\bar{C}^{\tau}_{n,d}(\alpha,i_1,...,i_d)=\{[j_d,...,j_1] \mid  i_k=dist(j_k,\alpha,j_1,...,j_d), 1 \leq k \leq d, \\ (\alpha,[j_d,...,j_1]) \text{ is of type } \tau \}.
\end{align*}
\end{definition}

\begin{remark}\label{3053}
Let $\alpha$ be a permutation in $S_n$. We have
\begin{align*}
\bar{C}_{n,d}=\bigcup_{\tau \in S_d} \bigcup_{i_1,...,i_d \geq 1} \bar{C}^{\tau}_{n,d}(\alpha,i_1,...,i_d).
\end{align*}
Given any $d$-tuple $[j_d,...,j_1]$, the "distance" $dist(j_i,\alpha,j_1,...,j_d)$ and the type of $(\alpha,[j_d,...,j_1])$ are uniquely determined. Hence, the union above is a disjoint union. Also, there are only finitely many nonempty sets $\bar{C}^{\tau}_{n,d}(\alpha,i_1,...,i_d)$ in the above union.
\end{remark}

\begin{lemma}\label{30539}
Given any two elements $\bar{\sigma}$ and $\bar{\sigma}'$ in $\bar{C}^{\tau}_{n,d}(\alpha,i_1,...,i_d)$, $\bar{\sigma}\alpha$ and $\bar{\sigma}'\alpha$ are of the same type.
\end{lemma}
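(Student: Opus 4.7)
The plan is to prove the lemma by computing the cycle structure of $\bar{\sigma}\alpha$ explicitly via the restrict–multiply–insert recipe of Construction \ref{3009} (generalized to $d$-cycles in Construction \ref{3050}) and showing that it depends only on $\tau$ and $(i_1,\ldots,i_d)$, not on the particular choice of $\bar{\sigma} \in \bar{C}^{\tau}_{n,d}(\alpha,i_1,\ldots,i_d)$.

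First, I would split the cycles of $\bar{\sigma}\alpha$ into two groups: the \emph{untouched} cycles, which are those cycles of $\alpha$ disjoint from $\{j_1,\ldots,j_d\}$ (and hence unchanged after left-multiplication by $\pi(\bar{\sigma})$, because $\pi(\bar{\sigma})$ acts as the identity outside $\{j_1,\ldots,j_d\}$), and the \emph{touched} cycles, which get rewired. For the touched part, the direct generalization of the three-step procedure in Construction \ref{3009} gives the following claim: the cycles of $\bar{\sigma}\alpha$ built from touched elements are in bijection with the cycles of $\pi(\bar{\sigma})\cdot g_{\bar{\sigma}}$ in $\mathrm{Aut}\{j_1,\ldots,j_d\}$, and for each such cycle $C$ the corresponding cycle of $\bar{\sigma}\alpha$ has length $\sum_{j_k\in C} i_k$. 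This claim can be verified by following the orbits: starting at $j_k$, iterating $\bar{\sigma}\alpha$ walks through $i_k-1$ elements outside $\{j_1,\ldots,j_d\}$, arrives at $g_{\bar{\sigma}}(j_k)$, and then $\pi(\bar{\sigma})$ relays to the next $j_l$, which re-starts a new gap.

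Next I would transport everything to $S_d$ via the identification of Notation \ref{3049}. Under this identification $g_{\bar{\sigma}}$ becomes $\tau$ by the definition of $\bar{C}^{\tau}_{n,d}(\alpha,i_1,\ldots,i_d)$, and the multi-set of lengths of touched cycles of $\alpha$ itself is $\{\sum_{k\in C'} i_k : C' \text{ cycle of }\tau\}$, which depends only on $\tau$ and $(i_1,\ldots,i_d)$. Subtracting this sub-multi-set from the fixed cycle type of $\alpha$ yields a well-defined multi-set of untouched cycle lengths, common to every $\bar{\sigma}$ in $\bar{C}^{\tau}_{n,d}(\alpha,i_1,\ldots,i_d)$.

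The main obstacle is the parallel statement for the new cycles produced in the touched part: one must show that the multi-set $\{\sum_{j_k\in C} i_k\}_{C}$ over cycles $C$ of $\pi(\bar{\sigma})g_{\bar{\sigma}}$ in $\mathrm{Aut}\{j_1,\ldots,j_d\}$ depends only on $\tau$ and $(i_1,\ldots,i_d)$. The subtlety is that, under the rank-based identification, the $d$-cycle corresponding to $\pi(\bar{\sigma})$ can appear to depend on the relative ordering of $j_1,\ldots,j_d$. I would handle this by working directly inside $\mathrm{Aut}\{j_1,\ldots,j_d\}$: the cycle decomposition of $\pi(\bar{\sigma})g_{\bar{\sigma}}$ (together with the distinguished labelling $j_k \mapsto i_k$) is determined purely by the combinatorial datum $(\tau,(i_1,\ldots,i_d))$, since $\pi(\bar{\sigma})$ is always the cycle $(j_d\ j_{d-1}\ \cdots\ j_1)$ on the labelled set and $g_{\bar{\sigma}}$ is fixed once $\tau$ is fixed. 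A clean way to see this is to re-label via $j_k\mapsto k$, under which $\pi(\bar{\sigma})\mapsto (d,d-1,\ldots,1)$ universally and $g_{\bar{\sigma}}\mapsto \tau$, so that the weighted cycle structure of $\pi(\bar{\sigma})g_{\bar{\sigma}}$ is just the weighted cycle structure of $(d,d-1,\ldots,1)\tau$ with weight $i_k$ attached to the symbol $k$. Combining the untouched and touched parts then expresses the cycle type of $\bar{\sigma}\alpha$ as a single function of $\tau$ and $(i_1,\ldots,i_d)$, which is the statement of the lemma.
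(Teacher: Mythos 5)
Your proof is correct and takes essentially the same route as the paper's: the paper's own (much terser) proof likewise invokes the restrict--multiply--insert procedure of Construction \ref{3009}/\ref{3050}, observing that both restrictions $\alpha_{\bar{\sigma}}$, $\alpha_{\bar{\sigma}'}$ have the same type $\tau$, hence $\bar{\sigma}\alpha_{\bar{\sigma}}$ and $\bar{\sigma}'\alpha_{\bar{\sigma}'}$ do, and that the equal distances $i_k$ control the reinsertion of the forgotten digits. Your write-up merely fills in what the paper leaves implicit --- the orbit-following computation showing each cycle $C$ of $\pi(\bar{\sigma})g_{\bar{\sigma}}$ yields a cycle of $\bar{\sigma}\alpha$ of length $\sum_{j_k \in C} i_k$, the multiset subtraction handling the untouched cycles, and the relabeling $j_k \mapsto k$ sending $\pi(\bar{\sigma})g_{\bar{\sigma}}$ to $(d \ \cdots \ 1)\tau$ --- a worthwhile expansion (it also correctly resolves the index-based versus rank-based identification ambiguity in Notation \ref{3049}), but not a different argument.
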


\begin{proof}
Assume $\bar{\sigma}=[j_d,...,j_1]$ and $\bar{\sigma}'=[j_d',...,j_1']$. By Definition \ref{3052}, $\alpha_{\bar{\sigma}}$ and $\alpha_{\bar{\sigma}'}$ are of the same type $\tau \in S_d$. Hence, $\bar{\sigma}\alpha_{\bar{\sigma}}$ and $\bar{\sigma}'\alpha_{\bar{\sigma}'}$ are of the same type (The second step in Construction \ref{3009}). Also, by definition, we know
\begin{align*}
i_v=dist(j_v,\alpha,j_1,...,j_d)=dist(j'_v,\alpha,j'_1,...,j'_d).
\end{align*}
Hence, $\bar{\sigma}\alpha$ and $\bar{\sigma}'\alpha$ are of the same type.
\end{proof}
\begin{lemma}\label{30540}
Let $\alpha$ be an $n$-cycle in $S_n$. Let $\bar{C}^{\tau}_{n,d}(\alpha,i_1,...,i_d)$ be a nonempty set for some $\tau \in S_d$, and $\bar{\sigma}=[j_d,...,j_1]$ is a $d$-tuple in the set $\bar{C}^{\tau}_{n,d}(\alpha,i_1,...,i_d)$. Then, the number of all elements in this set $\bar{C}^{\tau}_{n,d}(\alpha,i_1,...,i_d)$ is $n$.
\end{lemma}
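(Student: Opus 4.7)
The plan is to exhibit a free and transitive action of the cyclic group $\langle\alpha\rangle\cong\mathbb{Z}/n\mathbb{Z}$ on $\bar{C}^{\tau}_{n,d}(\alpha,i_1,\ldots,i_d)$; its cardinality is then forced to be exactly $n$. Given the prescribed $\bar{\sigma}=[j_d,\ldots,j_1]$ in the set, I define for each $r\in\{0,1,\ldots,n-1\}$ the shifted tuple
\[
\bar{\sigma}^{(r)}\;:=\;[\alpha^r(j_d),\ldots,\alpha^r(j_1)].
\]

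First I verify that each $\bar{\sigma}^{(r)}$ still lies in $\bar{C}^{\tau}_{n,d}(\alpha,i_1,\ldots,i_d)$. Preservation of the distances is immediate: since $\alpha^r$ is a bijection of $\{1,\ldots,n\}$ commuting with $\alpha$, the relation $\alpha^\ell(\alpha^r(j_k))\in\alpha^r(J)$ (with $J=\{j_1,\ldots,j_d\}$) is equivalent to $\alpha^\ell(j_k)\in J$, so the minimum such $\ell$ is still $i_k$. For the type, write the restriction as $\alpha_J(j_k)=j_{\sigma(k)}$ with $\sigma\in S_d$ the permutation representing $\tau$ under the label-based identification $j_k\leftrightarrow k$ used in Construction \ref{3050}; commutativity then shows the restriction of $\alpha$ to $\alpha^r(J)$ sends $\alpha^r(j_k)$ to $\alpha^r(j_{\sigma(k)})$, i.e., maps the $k$-th label of $\bar{\sigma}^{(r)}$ to its $\sigma(k)$-th label, so the type remains $\tau$. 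The $n$ shifts are pairwise distinct because $\alpha$ has order $n$: $\bar{\sigma}^{(r)}=\bar{\sigma}^{(r')}$ forces $\alpha^{r-r'}(j_1)=j_1$, hence $r\equiv r'\pmod n$.

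For the converse direction, let $\bar{\sigma}'=[j_d',\ldots,j_1']$ be any element of the set. Since $\alpha$ is an $n$-cycle there is a unique $r\in\{0,\ldots,n-1\}$ with $j_1'=\alpha^r(j_1)$. Moreover, the restriction of an $n$-cycle to any $d$-subset preserving cyclic structure is itself a single $d$-cycle, so $\tau$ is a $d$-cycle in $S_d$ and the iterates $\tau^s(1)$ exhaust $\{1,\ldots,d\}$; the defining relation $j_{\tau(k)}=\alpha^{i_k}(j_k)$, which holds for both $\bar{\sigma}$ and $\bar{\sigma}'$ with the same exponents $i_k$, propagates $j_k'=\alpha^r(j_k)$ inductively from $k=1$ to $k=\tau(1),\tau^2(1),\ldots$, covering all labels, so $\bar{\sigma}'=\bar{\sigma}^{(r)}$. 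The main subtle point is the type-preservation check in step one: it demands the label-based reading of the isomorphism $Aut\{j_1,\ldots,j_d\}\cong S_d$ in keeping with how $\tau$ is used case-by-case in Lemma \ref{3015}, after which both verifications reduce cleanly to the observation that $\alpha^r$ commutes with $\alpha$.
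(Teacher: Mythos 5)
Your proof is correct and takes essentially the same route as the paper: both parametrize the elements of $\bar{C}^{\tau}_{n,d}(\alpha,i_1,\ldots,i_d)$ by the $n$ possible positions of the first entry along the $n$-cycle, which is exactly your free transitive action of $\langle\alpha\rangle$ via $r\mapsto[\alpha^r(j_d),\ldots,\alpha^r(j_1)]$, matching the paper's ``the choice of $j_1'$ completely determines the tuple, and there are $n$ choices.'' The only difference is completeness: the paper leaves both verifications (that each such choice yields an element of the set, and that these exhaust it) as exercises, whereas you carry them out --- including the type-preservation check, correctly read through the label-based identification $j_k\leftrightarrow k$ that Construction \ref{3050} and Lemma \ref{3015} implicitly use, and the propagation along the single $d$-cycle $\tau$ that settles exhaustion.
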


\begin{proof}
If we want to use $\bar{\sigma}$ to construct some $d$-tuple $[j_d',...,j_1']$ in $\bar{C}^{\tau}_{n,d}(\alpha,i_1,...,i_d)$, we have to pick $d$ integers $j_i'$, $1 \leq i \leq d$ such that
\begin{align*}
i_k=dist(j_k,\alpha,j_1,...,j_d)=dist(j_k',\alpha,j_1',...,j_d').
\end{align*}
At the same time, we know $j_1,...,j_d$ are in the same disjoint cycle and
\begin{align*}
\sum_{k=1}^{d}dist(j_k,\alpha,j_1,...,j_d)=\sum_{k=1}^{d}dist(j_k',\alpha,j_1',...,j_d')=n.
\end{align*}
Hence, the choice of $j_1'$ will completely determine the $d$-tuple $[j_d',...,j_1']$. There are $n$ choices for $j_1'$ and each choice determines a unique $d$-tuple in $\bar{C}^{\tau}_{n,d}(\alpha,i_1,...,i_d)$. It is easy to prove they are all of the elements in $\bar{C}^{\tau}_{n,d}(\alpha,i_1,...,i_d)$. We leave it as an exercise for the reader.
\end{proof}

The following lemma is a generalization of Lemma \ref{30001}.
\begin{lemma}\label{30541}
Let $\alpha$ be a permutation in $S_n$ and let $i_1,...,i_d$ be $d$ positive integers smaller than $n$. Let $\tau$ be a permutation in $S_d$ and $\tau=\tau_1...\tau_m$, which is the product of disjoint cycles of $m$. We define new integers $\widetilde{i}_v$ as
\begin{align*}
\widetilde{i}_v=\sum_{k \in \tau_v} i_k,
\end{align*}
where $1 \leq v \leq m$. We assume the number of disjoint cycles with length $\widetilde{i}_v$ in $\alpha$ is $c_v$, $1 \leq v \leq m$. Then, the number of elements in $\bar{C}^{\tau}_{n,d}(\alpha,i_1,...,i_d)$ is $\prod_{v=1}^m c_v \widetilde{i}_v$.
\end{lemma}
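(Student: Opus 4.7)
The plan is to mirror the proof of Lemma~\ref{30001}, which handled the special case $\tau = (1)(2)(3)$, and to generalize it by using the cycle structure of $\tau$ to group the integers $j_1,\dots,j_d$ according to which disjoint cycles of $\alpha$ they must sit on.

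First, I would unpack what it means for $(\alpha,[j_d,\dots,j_1])$ to be of type $\tau$. By Construction~\ref{3050}, the restriction $\alpha_{\bar{\sigma}} \in S_d$ equals $\tau$ (after the order-preserving identification of $\{j_1,\dots,j_d\}$ with $\{1,\dots,d\}$). Consequently, if $\tau_v = (k_1\,k_2\,\cdots\,k_{\ell_v})$ is one of the disjoint cycles of $\tau$, then the integers $j_{k_1},\dots,j_{k_{\ell_v}}$ all lie in a common disjoint cycle of $\alpha$, and the cyclic order in which iteration of $\alpha$ visits them is exactly $j_{k_1}\to j_{k_2}\to\cdots\to j_{k_{\ell_v}}\to j_{k_1}$. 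By Definition~\ref{3010}, the distance from $j_{k_r}$ to $j_{k_{r+1}}$ along this cycle equals $i_{k_r}$, so summing around, the supporting disjoint cycle of $\alpha$ has length exactly $\widetilde{i}_v = \sum_{k\in\tau_v} i_k$. Moreover, different cycles of $\tau$ must correspond to different disjoint cycles of $\alpha$, since two indices whose $j$-values lie on the same $\alpha$-cycle are automatically in the same cycle of the restriction $\alpha_{\bar{\sigma}}=\tau$.

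Next, I would count the elements of $\bar{C}^\tau_{n,d}(\alpha,i_1,\dots,i_d)$ constructively, following the two-step pattern of Lemma~\ref{30001}. For each $v$, select a disjoint cycle $\rho_v$ of $\alpha$ of length $\widetilde{i}_v$; by hypothesis, there are $c_v$ available choices, and if any $c_v=0$ then the set is empty and the formula reduces to $0$. Once $\rho_v$ is fixed, I must place the integers $\{j_k : k\in\tau_v\}$ on $\rho_v$ in a manner compatible with the cyclic order prescribed by $\tau_v$ and with the distances $i_k$. This is exactly the situation of Lemma~\ref{30540} applied to the single cycle $\rho_v$: the position of one of the $j_k$'s (say with $k$ the smallest index in $\tau_v$) may be chosen freely among the $\widetilde{i}_v$ elements of $\rho_v$, and the remaining $j_k$'s for $k \in \tau_v$ are then forced by iterating $\alpha$ along the prescribed distances. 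Multiplying the $c_v$ cycle choices and the $\widetilde{i}_v$ placement choices over $v$ yields the claimed count $\prod_{v=1}^{m} c_v \widetilde{i}_v$.

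Finally, I would check that every tuple produced this way indeed lies in $\bar{C}^\tau_{n,d}(\alpha,i_1,\dots,i_d)$ and that distinct choices give distinct tuples. Both assertions are immediate from the construction, using $\sum_{k\in\tau_v} i_k = \widetilde{i}_v = |\rho_v|$ so the distances close up consistently around each $\rho_v$. The main obstacle I expect is the bookkeeping in the placement step: one must verify that after fixing the position of the representative $j_k$ on $\rho_v$, the forced positions of the remaining $j_{k'}$, $k'\in\tau_v$, reproduce the prescribed distances $i_{k'}$ in precisely the cyclic order encoded by $\tau_v$, and that this produces a well-defined $d$-tuple in $\bar{C}_{n,d}$ (no repetitions among the $j_k$'s). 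This is essentially a cycle-by-cycle application of Lemma~\ref{30540}, so once the correspondence between the cycle $\tau_v$ of $\tau$ and the placement of $j_k$'s on $\rho_v$ is spelled out carefully, the proof reduces to the same counting argument as in Lemma~\ref{30001}.
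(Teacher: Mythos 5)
Your proposal is correct in the same sense and to the same extent as the paper's own proof: it follows the identical two-step count (choose the supporting disjoint cycles of $\alpha$, $\prod_{v} c_v$ ways, then place the marked integers on each chosen cycle via Lemma \ref{30540}, $\widetilde{i}_v$ ways each), i.e.\ it is essentially the paper's argument generalizing Lemma \ref{30001}. One remark: you correctly observe that distinct cycles of $\tau$ must land on distinct disjoint cycles of $\alpha$, yet --- exactly like the paper --- you then count the cycle choices as an independent product $\prod_{v} c_v$; when some of the lengths $\widetilde{i}_v$ coincide this overcounts (e.g.\ for $\alpha$ the identity in $S_2$, $\tau=(1)(2)$, $i_1=i_2=1$ the set has $2$ elements, not $c_1 c_2=4$, so the correct factor is a falling factorial), but this defect is in the lemma as stated rather than in your argument, and it cancels against the same imprecision in the derivative coefficients when the count is used in Lemma \ref{3054}.
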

\begin{proof}
If $\bar{\sigma}=[j_d,...,j_1] \in \bar{C}^{\tau}_{n,d}(\alpha,i_1,...,i_d)$, it means $\alpha_{\bar{\sigma}}$ is of type $\tau$ and the integers $j_d,...,j_1$ are chosen from disjoints cycles $\alpha'_v$ with length $i'_v$, $1 \leq v \leq m$, such that
\begin{align*}
i_k=dist(j_k,\alpha,j_1,...,j_d), \quad \widetilde{i}_v=\sum_{k \in \tau_v} i_k.
\end{align*}
By assumption, we know the number of disjoint cycles with length $\widetilde{i}_v$ in $\alpha$ is $c_v$, $1 \leq v \leq m$. Hence, the choice of the disjoint cycles $\alpha'_v$ with length $\widetilde{i}_v$, $1 \leq v \leq m$, is $\prod_{v=1}^m c_v$. Now we fix a possible choice for disjoint cycles $\alpha'_v$, $1 \leq v \leq m$. We want to pick integers $j'_k$, $k \in \tau_v$, from $\alpha'_v$ such that
\begin{align*}
\widetilde{i}_v=\sum_{k \in \tau_v} i'_k, \quad i'_k=dist(j'_k,\alpha,j'_1,...,j'_d)=i_k.
\end{align*}
By Lemma \ref{30540}, the number of choices of picking such integers $j'_k$, $k \in \tau_v$, is $\widetilde{i}_k$, which is the length of $\alpha'_v$. The choices of integers $j_k'$ from different disjoint cycles are independent. Hence, given a possible choice for disjoint cycles $\alpha'_v$, $1 \leq v \leq m$, we can construct $\prod_{v=1}\widetilde{i}_v$ many $d$-tuples in $\bar{C}^{\tau}_{n,d}(\alpha,i_1,...,i_d)$. In conclusion, we can construct $\prod_{v=1}^m c_v \widetilde{i}_v$ many $d$-tuples $\bar{C}^{\tau}_{n,d}(\alpha,i_1,...,i_d)$. It is easy to check they are all elements in $\bar{C}^{\tau}_{n,d}(\alpha,i_1,...,i_d)$.
\end{proof}

\begin{remark}\label{30542}
We use the same notations as in Lemma \ref{30541}. By Definition \ref{32}, we know
\begin{align*}
		\frac{\partial}{\partial \hat{p}_\tau}(i_1,...,i_d) = \prod_{v=1}^{m}((\sum_{k \in \tau_v}i_k) \frac{\partial}{\partial p_{\sum_{k \in \tau_v}i_k}})=\prod_{v=1}^{m}(\widetilde{i}_v \frac{\partial}{\partial p_{\widetilde{i}_v}}).
\end{align*}
\end{remark}

The next lemma is a generalization of Lemma \ref{3015}.
\begin{lemma}\label{3054}
Let $\alpha$ be a permutation in $S_n$ and let $i_1,...,i_d$ be $d$ positive integers smaller than $n$. $\tau$ is a permutation in $S_d$. We have
\begin{align*}
 \Phi(\sum_{[j_d,...,j_1] \in \bar{C}^{\tau}_{n,d}(\alpha,i_1,...,i_d)} [j_d,...,j_1] \alpha)
=\hat{p}_{\phi_d(\tau)}(i_1,...,i_d) \frac{\partial}{\partial \hat{p}_{\tau}}(i_1,...,i_d)\Phi(\alpha).
\end{align*}
\end{lemma}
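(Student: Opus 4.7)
The plan is to mimic the proof of Lemma \ref{3015} verbatim, with the combinatorial lemmas \ref{30539}, \ref{30540}, \ref{30541} doing the heavy lifting in full generality.

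First I would dispose of the degenerate case: if $\bar{C}^{\tau}_{n,d}(\alpha,i_1,\ldots,i_d)$ is empty, then for at least one cycle $\tau_v$ of $\tau$ the integer $\widetilde{i}_v=\sum_{k\in\tau_v}i_k$ fails to appear as the length of any cycle of $\alpha$, so $\partial/\partial p_{\widetilde{i}_v}$ annihilates $\Phi(\alpha)$ and both sides of the claimed identity vanish.

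In the nonempty case I fix a representative $\bar{\sigma}=[j_d,\ldots,j_1]$ in the set. By Lemma \ref{30539}, $\Phi(\bar{\sigma}\alpha)$ is independent of the choice of representative, and by Lemma \ref{30541} the cardinality of $\bar{C}^{\tau}_{n,d}(\alpha,i_1,\ldots,i_d)$ is $\prod_{v=1}^{m}c_v\widetilde{i}_v$, where $c_v$ is the number of cycles of length $\widetilde{i}_v$ in $\alpha$. Hence the left-hand side equals $\bigl(\prod_v c_v\widetilde{i}_v\bigr)\Phi(\bar{\sigma}\alpha)$. Using Constructions \ref{3009} and \ref{3050}, I would then determine $\Phi(\bar{\sigma}\alpha)$: multiplication by $\bar{\sigma}$ deletes the $m$ disjoint cycles of $\alpha$ that collectively contain $j_1,\ldots,j_d$ (these contribute the factor $\prod_v p_{\widetilde{i}_v}$ to $\Phi(\alpha)$) and replaces them by new cycles whose lengths are $\sum_{k\in\phi_d(\tau)_{v'}}i_k$, one for each cycle of $\phi_d(\tau)=(d\,\cdots\,1)\tau$. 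Therefore $\Phi(\bar{\sigma}\alpha)\prod_v p_{\widetilde{i}_v}=\Phi(\alpha)\,\hat{p}_{\phi_d(\tau)}(i_1,\ldots,i_d)$.

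On the operator side, Remark \ref{30542} rewrites $\tfrac{\partial}{\partial \hat{p}_\tau}(i_1,\ldots,i_d)$ as $\prod_v\widetilde{i}_v\,\partial/\partial p_{\widetilde{i}_v}$; applying this to $\Phi(\alpha)$ extracts the coefficient $\prod_v c_v\widetilde{i}_v$ and leaves $\Phi(\alpha)/\prod_v p_{\widetilde{i}_v}$. Multiplying by $\hat{p}_{\phi_d(\tau)}(i_1,\ldots,i_d)$ then matches the left-hand side exactly.

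The main obstacle I anticipate is the bookkeeping when several of the $\widetilde{i}_v$ coincide: the factor $\prod_v c_v$ must be extracted consistently on both sides of the identity. However, this is exactly the content of Lemma \ref{30541}, which counts ordered tuples of cycles of $\alpha$ with the prescribed length profile in the same way that repeated differentiation picks up multiplicities. A secondary point is justifying the asserted cycle structure of $\bar{\sigma}\alpha$: via the isomorphism of Notation \ref{3049}, this reduces to computing the disjoint-cycle decomposition of $\phi_d(\tau)=(d\,\cdots\,1)\tau$ inside $S_d$, which is a finite combinatorial check that is already implicit in Construction \ref{3009} when $d=3$.
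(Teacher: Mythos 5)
Your proposal follows essentially the same route as the paper's proof: dispose of the empty case via vanishing of $\partial/\partial p_{\widetilde{i}_v}$, use Lemma \ref{30539} for independence of the representative and Lemma \ref{30541} for the cardinality $\prod_{v}c_v\widetilde{i}_v$ on the left, and Remark \ref{30542} together with the multiplicity $c_v$ extracted by differentiating $p_{\widetilde{i}_v}^{c_v}$ on the right. If anything, you are slightly more explicit than the paper in justifying the cycle structure of $\bar{\sigma}\alpha$ through the decomposition of $\phi_d(\tau)=(d\,\cdots\,1)\tau$ and in flagging the falling-factorial bookkeeping when several $\widetilde{i}_v$ coincide, both of which the paper leaves implicit.
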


\begin{proof}
We use the same notations as in Lemma \ref{30541}. If $\bar{C}^{\tau}_{n,d}(\alpha,i_d,...,i_1)$ is empty, with a similar argument as in Lemma \ref{3015}, we can get
\begin{align*}
0=\Phi(\sum_{[j_d,...,j_1] \in \bar{C}^{\tau}_{n,d}(\alpha,i_1,...,i_d)} [j_d,...,j_1] \alpha)
=\hat{p}_{\phi_d(\tau)}(i_1,...,i_d) \frac{\partial}{\partial \hat{p}_{\tau}}(i_1,...,i_d)\Phi(\alpha)=0.
\end{align*}
Hence, we assume there is at least one disjoint cycle with length $\widetilde{i}_v$ in $\alpha$, $1 \leq v \leq m$. The number of disjoint cycles with length $\widetilde{i}_v$ in $\alpha$ is $c_v$. By Lemma \ref{30541}, we know the number of elements in $\bar{C}^{\tau}_{n,d}(\alpha,i_1,...,i_d)$ is $\prod_{v=1}^m c_v \widetilde{i}_v$. By Lemma \ref{30539} and \ref{30541}, we have
\begin{align*}
( \sum_{[j_d,...,j_1] \in \bar{C}^{\tau}_{n,d}(\alpha,i_1,...,i_d)} [j_d,...,j_1] \alpha )=(\prod_{v=1}^m c_v \widetilde{i}_v) \Phi(\alpha'),
\end{align*}
where $\alpha'=[j_d,...,j_1]\alpha$ for some $[j_d,...,j_1] \in \bar{C}^{\tau}_{n,d}(\alpha,i_1,...,i_d)$ (By Lemma \ref{30539}, $\Phi([j_d,...,j_1]\alpha)$ does not depend on the choice of $[j_d,...,j_1]$ in $\bar{C}^{\tau}_{n,d}(\alpha,i_1,...,i_d)$). By assumption, we know there are $c_v$ disjoint cycles with length $\widetilde{i}_v$ in $\alpha$, it means the order of $p_{\widetilde{i}_v}$ in the monomial $\Phi(\alpha)$ is $c_v$. So, when we calculate $\frac{\partial \Phi(\sigma_v)}{\partial p_{\widetilde{i}_v}}$, we will have a coefficient $c_v$. By Remark \ref{30542}, we have
\begin{align*}
\hat{p}_{\phi_d(\tau)}(i_1,...,i_d) \frac{\partial}{\partial \hat{p}_{\tau}}(i_1,...,i_d)\Phi(\alpha)=(\prod_{v=1}^m c_v \widetilde{i}_v) \Phi(\alpha').
\end{align*}
Hence, we get the following equation
\begin{align*}
\Phi(\sum_{[j_d,...,j_1] \in \bar{C}^{\tau}_{n,d}(\alpha,i_1,...,i_d)} [j_d,...,j_1] \alpha)
=\hat{p}_{\phi_d(\tau)}(i_1,...,i_d) \frac{\partial}{\partial \hat{p}_{\tau}}(i_1,...,i_d)\Phi(\alpha).
\end{align*}
\end{proof}

\begin{theorem}\label{315}
	For any $g \in \mathbb{C}S_n$,
	\begin{align*}
		\Phi(K_{1^{n-d}d}g)=\Delta_d \Phi(g).
	\end{align*}
\end{theorem}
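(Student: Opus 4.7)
The plan is to mimic the proof of Theorem \ref{313} verbatim, but with the general-$d$ machinery (Construction \ref{3050}, Definitions \ref{3051}--\ref{3052}, Remark \ref{3053}, Lemma \ref{3054}) in place of the ad hoc $d=3$ versions. By $\mathbb{C}$-linearity of both sides it suffices to prove the identity for a single permutation $g=\alpha\in S_n$, so I reduce to that case immediately.

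First I would compute $\Phi(\bar{K}_{1^{n-d}d}\alpha)$, where $\bar{K}_{1^{n-d}d}=\sum_{\bar{\sigma}\in\bar{C}_{n,d}}\bar{\sigma}$. Using the disjoint decomposition
\begin{align*}
\bar{C}_{n,d}=\bigsqcup_{\tau\in S_d}\bigsqcup_{i_1,\ldots,i_d\geq 1}\bar{C}^{\tau}_{n,d}(\alpha,i_1,\ldots,i_d)
\end{align*}
from Remark \ref{3053}, I split the sum $\sum_{\bar{\sigma}\in\bar{C}_{n,d}}\Phi(\bar{\sigma}\alpha)$ into an outer sum over $\tau\in S_d$ and inner sums over the distance tuples $(i_1,\ldots,i_d)$. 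Lemma \ref{3054} evaluates each inner block:
\begin{align*}
\Phi\Bigl(\sum_{[j_d,\ldots,j_1]\in\bar{C}^{\tau}_{n,d}(\alpha,i_1,\ldots,i_d)}[j_d,\ldots,j_1]\alpha\Bigr)=\hat{p}_{\phi_d(\tau)}(i_1,\ldots,i_d)\,\frac{\partial}{\partial \hat{p}_\tau}(i_1,\ldots,i_d)\,\Phi(\alpha).
\end{align*}

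Summing these identities over all $\tau$ and all positive tuples $(i_1,\ldots,i_d)$, the right-hand side becomes exactly $d\,\Delta_d\Phi(\alpha)$ by Definition \ref{37}. On the left, the $d$-to-$1$ projection $\pi:\bar{C}_{n,d}\to C_{n,d}$ of Definition \ref{39} gives $\pi(\bar{K}_{1^{n-d}d})=d\,K_{1^{n-d}d}$ and hence $\Phi(\bar{K}_{1^{n-d}d}\alpha)=d\,\Phi(K_{1^{n-d}d}\alpha)$. Cancelling the common factor $d$ yields
\begin{align*}
\Phi(K_{1^{n-d}d}\alpha)=\Delta_d\Phi(\alpha),
\end{align*}
which is the claim for $g=\alpha$.

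There is no real obstacle here; all the combinatorial bookkeeping has been absorbed into the preceding lemmas. The only point one needs to be careful about is the factor of $d$ coming from the fact that $\bar{C}_{n,d}$ parametrizes $d$-tuples rather than $d$-cycles, which is precisely what the $\tfrac{1}{d}$ in Definition \ref{37} is designed to compensate for. Once this bookkeeping is recognized, the theorem is obtained by assembling the identities of Lemma \ref{3054} over the disjoint decomposition of Remark \ref{3053}, exactly as in the proof of Theorem \ref{313}.
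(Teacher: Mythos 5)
Your proposal is correct and follows essentially the same route as the paper's own proof: reduce by linearity to a single permutation $\alpha$, split $\Phi(\bar{K}_{1^{n-d}d}\alpha)$ over the disjoint decomposition of Remark \ref{3053}, evaluate each block via Lemma \ref{3054} to obtain $d\,\Delta_d\Phi(\alpha)$, and use the $d$-to-$1$ map $\pi_{n,d}$ of Definition \ref{39} to identify this with $d\,\Phi(K_{1^{n-d}d}\alpha)$. The only difference is cosmetic: you make explicit the linearity reduction and the identity $\pi(\bar{K}_{1^{n-d}d})=d\,K_{1^{n-d}d}$, both of which the paper leaves implicit.
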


\begin{proof}
We assume $g$ is a permutation in $S_n$. Say $g=\alpha$. By Remark \ref{3053}, we have
\begin{align*}
\bar{C}_{n,d}=\bigcup_{\tau \in S_d} \bigcup_{i_1,...,i_d \geq 1} \bar{C}^{\tau}_{n,d}(\alpha,i_1,...,i_d).
\end{align*}
Then, we get
\begin{align*}
\Phi(\bar{K}_{1^{n-d}d}\alpha)&
=\Phi(\sum_{\tau \in S_d} \sum_{i_1,...,i_d \geq 1}\sum_{[j_d,...,j_1] \in \bar{C}^{\tau}_{n,d}(\alpha,i_1,...,i_d)} [j_d,...,j_1] \alpha)\\
& = \sum_{i_1,...,i_d \geq 1}\sum_{\tau \in S_d}\hat{p}_{\phi_d(\tau)}(i_1,...,i_d) \frac{\partial}{\partial \hat{p}_{\tau}}(i_1,...,i_d)\Phi(\alpha)\\
& = d \Delta_d \Phi(\alpha),
\end{align*}
where the second equality comes from Lemma \ref{3054} and the last equality comes from Definition \ref{37}. By Definition \ref{39}, we know the map $\pi_{n,d}:\bar{C}_{n,d} \rightarrow C_{n,d}$ is a $d$-to-$1$ map. So, we have
\begin{align*}
d\Phi(K_{1^{n-d}d}\alpha)=\Phi(\bar{K}_{1^{n-d}d}\alpha) =d\Delta_d \Phi(\alpha).
\end{align*}
\end{proof}

\begin{theorem}\label{316}
	For any positive integer $d$, $\Delta_d = W([d])$ as an operator on $\mathbb{C}[p_1,p_2,...]$.
\end{theorem}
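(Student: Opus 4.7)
The strategy is to combine the two main theorems already established in this paper. By Theorem~\ref{112}, for every $n \geq 1$ and every $g \in \mathbb{C}S_n$ we have $W([d])\Phi(g) = \Phi(K_{1^{n-d}d}g)$. By Theorem~\ref{315}, the same quantity equals $\Delta_d \Phi(g)$. Chaining these identities gives $W([d])\Phi(g) = \Delta_d\Phi(g)$ for every $g$ in every group ring $\mathbb{C}S_n$.

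Next I would promote this pointwise agreement on the image of $\Phi$ to an equality of operators on $\mathbb{C}[p_1,p_2,\ldots]$. For this it suffices to observe that the union $\bigcup_{n\geq 1}\Phi(\mathbb{C}S_n)$ already contains a $\mathbb{C}$-basis of the polynomial ring. Indeed, for any partition $\lambda=(\lambda_1,\ldots,\lambda_m)$ of any positive integer $n$, any permutation $\sigma\in S_n$ of cycle type $\lambda$ satisfies $\Phi(\sigma)=p_{\lambda_1}\cdots p_{\lambda_m}=p_\lambda$ directly from Definition~\ref{19}. Since $\{p_\lambda\}$, as $\lambda$ ranges over all partitions, is a basis of $\mathbb{C}[p_1,p_2,\ldots]$, the image of $\Phi$ spans the whole polynomial ring.

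Both $W([d])$ (well-defined on this ring by Theorem~\ref{18}) and $\Delta_d$ (by construction in Definition~\ref{37}) are $\mathbb{C}$-linear operators on $\mathbb{C}[p_1,p_2,\ldots]$, so coincidence on a spanning set forces coincidence as operators, yielding $\Delta_d = W([d])$.

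There is essentially no obstacle here: the entire computational content lives in Theorems~\ref{112} and~\ref{315}, and this statement is extracted as a clean corollary. The only point one should double-check is the consistency of the spanning-set reduction across different symmetric groups, but this is automatic: each basis element $p_\lambda$ is realized as $\Phi(\sigma)$ for some $\sigma\in S_{|\lambda|}$, and the values $W([d])p_\lambda$ and $\Delta_d p_\lambda$ are intrinsic polynomials in $\mathbb{C}[p_1,p_2,\ldots]$ that do not depend on the choice of $\sigma$ realizing $p_\lambda$.
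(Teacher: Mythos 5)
Your proof is correct and takes essentially the same route as the paper, which likewise deduces the identity by chaining Theorem~\ref{112} with Theorem~\ref{315} (the paper's citation of ``Theorem~\ref{110}'' is evidently a typo for Theorem~\ref{112}). Your explicit spanning argument---that every basis monomial $p_\lambda$ of $\mathbb{C}[p_1,p_2,\ldots]$ arises as $\Phi(\sigma)$ for some $\sigma \in S_{|\lambda|}$---simply fills in the detail the paper dismisses as easy.
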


\begin{proof}
	By Theorem \ref{110} and Theorem \ref{315}, it is easy to get this consequence.
\end{proof}

\begin{corollary}\label{317}
	For any $\beta \in S_d$, $\Delta_d=\Delta_\beta$ as operators on $\mathbb{C}[p_1,p_2,...]$.
\end{corollary}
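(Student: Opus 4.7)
The plan is to establish, for every $d$-cycle $\beta \in S_d$ and every $g \in \mathbb{C}S_n$, the identity
\begin{align*}
\Phi(\bar{K}_{1^{n-d}d}\, g) = d\, \Delta_\beta\, \Phi(g).
\end{align*}
Combined with Theorem \ref{315}, which is the analogous identity with $\Delta_d$, this immediately gives $\Delta_\beta \Phi(g) = \Delta_d \Phi(g)$ for every $g$. Since the images $\Phi(g)$ as $g$ ranges over $\bigcup_n S_n$ yield all monomials $p_\lambda$, which span $\mathbb{C}[p_1, p_2, \ldots]$, this forces $\Delta_\beta = \Delta_d$ as differential operators. Note that this implicitly interprets the statement as restricting $\beta$ to $d$-cycles, since Definition \ref{3020} only defines $\Delta_\beta$ in that case.

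To prove the generalized identity, I would re-run the chain of Lemmas \ref{30539}--\ref{3054} with $\beta$ replacing the canonical $d$-cycle $(d\cdots 1)$ used implicitly throughout Section 4.3. The pivotal observation is that $\bar{K}_{1^{n-d}d}$ is an intrinsic element of $\mathbb{C}\bar{C}_{n,d}$, independent of how one labels the positions inside each $d$-tuple. Fixing $\beta = (b_1 \cdots b_d)$, I would parameterize each $\bar{\sigma} \in \bar{C}_{n,d}$ as $[j_{b_d}, \ldots, j_{b_1}]$ rather than $[j_d, \ldots, j_1]$. This is merely a bijective re-labeling of $\bar{C}_{n,d}$ to itself, so the sum $\bar{K}_{1^{n-d}d}$ is unchanged; but in the computation $\bar{\sigma} \alpha$ via Construction \ref{3009}, step (2) now reads $\beta\, g_{\bar{\sigma}}$ in place of $(d\cdots 1)\, g_{\bar{\sigma}}$, which systematically replaces $\phi_d$ by $\phi_\beta$ in the conclusion of Lemma \ref{3054}. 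The counting arguments of Lemmas \ref{30540} and \ref{30541} depend only on the cycle type of $\alpha$ and on the unordered set $\{j_1,\ldots,j_d\}$, so they carry over verbatim.

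The main obstacle is checking that Construction \ref{3050} (the classification of pairs $(g,\bar{\sigma})$ into types indexed by $S_d$) together with the distance function of Definition \ref{3010} continues to produce the correct correspondence between the types $\tau \in S_d$ and the pairs $\bigl(\hat{p}_{\phi_\beta(\tau)}, \partial/\partial \hat{p}_\tau\bigr)$ appearing in $\Delta_\beta$. Because $\beta$ is still a $d$-cycle, $\beta\, g_{\bar{\sigma}}$ has the same cycle type as $(d\cdots 1)\, g_{\bar{\sigma}}$ for each $\tau$, and Remark \ref{3012} (which already applies to arbitrary $d$-cycles) guarantees that the distance function is preserved under the new indexing. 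Every step of Lemma \ref{3054} then goes through word-for-word with $\phi_\beta$ in place of $\phi_d$, and summing over $\tau \in S_d$ and $i_1, \ldots, i_d \geq 1$ via Remark \ref{3053} yields the generalized identity, completing the argument.
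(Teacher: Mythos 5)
Your route is the same as the paper's: the paper proves the corollary via the chain $\Delta_d \Phi(g)=\frac{1}{d}\Phi(\bar{K}_{1^{n-d}d}\,g)=\Delta_\beta \Phi(g)$, where the middle quantity is intrinsic to $\bar{C}_{n,d}$ and the second equality comes from re-indexing the sum over $\bar{C}_{n,d}$ by the bijection that permutes the entries of each tuple according to $\beta$ (Remark \ref{3022} for $d=3$, i.e.\ the $\phi_\beta$-analogue of Lemma \ref{3054}), after which equality of operators follows because the $\Phi(g)$ exhaust all monomials $p_\lambda$. Your plan --- relabel $\bar{C}_{n,d}$, re-run Lemmas \ref{30539}--\ref{3054} with $\phi_\beta$ in place of $\phi_d$, combine with Theorem \ref{315}, and conclude by spanning --- is exactly this, and your reading that $\beta$ should be restricted to $d$-cycles (as in Definition \ref{3020}) also matches the paper's intent.

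One supporting claim in your final paragraph is false, though fortunately dispensable: it is not true that $\beta\,g_{\bar{\sigma}}$ has the same cycle type as $(d\,\cdots\,1)\,g_{\bar{\sigma}}$ for each type $\tau$. Take $d=3$, $\tau=(1\,2\,3)$, $\beta=(1\,2\,3)$: then $(3\,2\,1)(1\,2\,3)=\mathrm{id}$ has type $1^3$, while $(1\,2\,3)(1\,2\,3)=(1\,3\,2)$ has type $3$; concretely, replacing $(3\,2\,1)$ by $(1\,2\,3)$ in Example \ref{38} swaps the monomials attached to the rows $\tau=(123)$ and $\tau=(132)$. The correspondence you need does not require this claim: by Definition \ref{3020} the $\tau$-block contributes $\hat{p}_{\phi_\beta(\tau)}\,\frac{\partial}{\partial \hat{p}_\tau}$ whatever cycle type $\beta\tau$ has, so the individual summands of $\Delta_\beta$ and $\Delta_d$ genuinely differ, and the operators agree only because the full sums over $\tau\in S_d$ both equal $\frac{1}{d}\Phi(\bar{K}_{1^{n-d}d}\,g)$ on every $\Phi(g)$ --- the agreement is global, not block-by-block. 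Delete or correct that one sentence and your argument is sound and coincides with the paper's proof.
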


\begin{proof}
	Given any monomial $\prod_{i=1}^k p_{j_i}$ in $\mathbb{C}[p_1,p_2,...]$, where $j_1 \leq j_2 \leq... \leq j_k$, it corresponds to the partition $(j_1,...,j_k)$. We pick a permutation $g$ of type $(j_1,...,j_k)$. Then, we have
	\begin{align*}
		\Delta_d \Phi(g)=&\frac{1}{d} \Phi(\sum_{[j_d,...,j_1] \in \bar{C}_{n,d}} [j_d,...,j_1]g)\\
		&=\frac{1}{d} \Phi(\bar{K}_{1^{n-d}d}g)\\
		&=\Phi(\sum_{[j_{\beta(d)},...,j_{\beta(1)}] \in \bar{C}_{n,d}} [j_{\beta(d)},...,j_{\beta(1)}]g)\\
		&=\Delta_\beta \Phi(g).
	\end{align*}  	
\end{proof}

\begin{corollary}\label{318}
	Let $d_1,d_2$ be positive integers. $W([d_1])$, $W([d_2])$ commutes as operators on $\mathbb{C}[p_1,p_2,...]$, i.e $W([d_1])W([d_2])=W([d_2])W([d_1])$.
\end{corollary}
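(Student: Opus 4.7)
The plan is to deduce the corollary from Theorem \ref{112} together with the fact that the class sums $K_{1^{n-d}d}$ lie in the center of $\mathbb{C}S_n$, where commutativity is automatic.

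First I would observe that $W([d])$ preserves the grading on $\mathbb{C}[p_1,p_2,\ldots]$ in which $\deg p_k = k$: each summand $D_{(a_1,\ldots,a_d)}$ contains the same number of $X$-factors as $X$-derivatives and so preserves total $X$-degree, which corresponds to the stated grading since $p_k = \mathrm{tr}(X^k)$ is itself $k$-homogeneous. Writing $V_n$ for the degree-$n$ piece, it is enough to verify the commutation on each $V_n$. Because $\Phi(\sigma)=p_\alpha$ whenever $\sigma\in S_n$ has type $\alpha$, the basis $\{p_\alpha : \alpha\vdash n\}$ of $V_n$ lies in the image of $\Phi:\mathbb{C}S_n \to V_n$, so it suffices to check
\begin{align*}
W([d_1])\,W([d_2])\,\Phi(g) \;=\; W([d_2])\,W([d_1])\,\Phi(g)
\end{align*}
for every $n$ and every $g \in \mathbb{C}S_n$.

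Fixing such a $g$, suppose first that $n \geq \max(d_1, d_2)$. Two applications of Theorem \ref{112} then give
\begin{align*}
W([d_1])\,W([d_2])\,\Phi(g) \;=\; W([d_1])\,\Phi\bigl(K_{1^{n-d_2}d_2}\, g\bigr) \;=\; \Phi\bigl(K_{1^{n-d_1}d_1}\, K_{1^{n-d_2}d_2}\, g\bigr),
\end{align*}
and an identical computation in the opposite order produces $\Phi\bigl(K_{1^{n-d_2}d_2}\, K_{1^{n-d_1}d_1}\, g\bigr)$. Because $K_{1^{n-d_1}d_1}$ and $K_{1^{n-d_2}d_2}$ are both central elements of $\mathbb{C}S_n$, they commute inside the group algebra, and the two expressions coincide. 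In the remaining range $n < d_i$ for some $i$, there is no $d_i$-cycle in $S_n$, so $K_{1^{n-d_i}d_i}=0$; Theorem \ref{112} then forces $W([d_i])\,\Phi(g)=0$, whence both composites vanish on $\Phi(g)$ and commutativity is trivial.

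I do not expect any serious obstacle: once the intertwining identity of Theorem \ref{112} is available, the corollary is essentially the centrality of the class sums $K_{1^{n-d}d}$ in $\mathbb{C}S_n$ transported through $\Phi$. The only bookkeeping is the small-$n$ edge case $n < \max(d_1,d_2)$, handled as above.
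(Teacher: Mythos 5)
Your proposal is correct and follows essentially the same route as the paper: reduce to $\Phi(g)$ for $g \in \mathbb{C}S_n$, apply Theorem \ref{112} twice, and invoke the centrality of $K_{1^{n-d_1}d_1}$ and $K_{1^{n-d_2}d_2}$ in $\mathbb{C}S_n$. Your explicit grading observation and the small-$n$ case $n < \max(d_1,d_2)$ are minor tidy additions the paper leaves implicit, but they do not change the argument.
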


\begin{proof}
	We take any monomial $\prod_{i=1}^k p_{j_i}$ in the ring $\mathbb{C}[p_1,p_2,...]$. We pick a permutation $g$ corresponding to this monomial. We have
	\begin{align*}
		& W([d_1])W([d_2]) \Phi(g) \\
		= &  \Phi(K_{1^{n-d_1}d_1} K_{1^{n-d_2}d_2}g)\\
		= &  \Phi(K_{1^{n-d_2}d_2} K_{1^{n-d_1}d_1}g) \\
		= &  W([d_2])W([d_1]) \Phi(g).
	\end{align*}
$K_{1^{n-d_{1}}d_{1}}, K_{1^{n-d_{2}}d_2}$ commutes, because they are central element in $\mathbb{C} S_n$. So, we have
\begin{align*}
W([d_1])W([d_2])=W([d_2])W([d_1]).
\end{align*}
\end{proof}

\section{Hurwitz Enumeration Problem}

Suppose $f:X \rightarrow S^2$ is a continuous map and $X$ is a degree $n$ covering of $S_2$ with branched points ${z_1,...,z_k}$. Let $D$ be an open disc such that the branch points are on the boundary of $D$, There are exactly $d$ connected components in $F^{-1}(D)$ which we label from $1$ to $d$. If we focus on a small neighborhood of $z_i$, beginning on the sheet $s$ and going around $z_i$ counter clockwise, we will arrive at a point on another sheet $\pi^{(i)}(s)$. Hence, we construct a permutation $\pi^{(i)}$ for each branch point $z_i$.

Now we choose a point $x \in S^2$ which is not the branch points. We begin at this point $x$ and walk around each branch point as described above. clearly if we begin on the sheet $s$, we must end on this sheet, because the corresponding loop on $S^2/\{z_1,...,z_k\}$ is contractible to a point. So we have the \textit{monodromy condition}
\begin{align*}
\pi^{(1)}...\pi^{(k)}=1.
\end{align*}
Since $X$ is connected, we must be able to move from one sheet to any other, hence, the subgroup generated by $\{\pi^{(1)},...,\pi^{(k)}\}$ must act transitively on the set $\{1,...,n\}$. This is called the \textit{transitivity condition}. Given $k$ partitions $\lambda_i$ of $n$, denote by $Cov_d(\lambda_1,...,\lambda_k)$ the number of $k$-tuples $(\sigma_1,...,\sigma_k) \in S^{k}_{n}$ satisfying the following conditions \cite{Carrel}:
\begin{itemize}
\item $\sigma_i$ is of type $\lambda_i$ for all $i$,
\item $\sigma_1...\sigma_k=1$(the monodromy condition),
\item the subgroup generated by $\{\sigma_1,...,\sigma_k\}$ acts transitive on the set $\{1,...,n\}$.
\end{itemize}
Now, we consider a special case of this problem.

\begin{definition}\label{51}
	Given positive integers $d$, $n$ and $k$, where $d \leq n$, define the number $h^{d}_k(\alpha)$ as following
	\begin{align*}
		h^{[d]}_k(\alpha)=Cov_d(1^{n-d}d,...,1^{n-d}d,\alpha),
	\end{align*}
	where $\alpha$ is a partition of $n$ and there are $k$ copies of partition $(1^{n-d}d)$. Define the generating function
	\begin{align*}
		H^{[d]}(z,p)=H^{[d]}(z,p_1,p_2,...)=\sum_{n \geq 1}\frac{1}{n!}\sum_{k=1}^{\infty}\sum_{\alpha \vdash n}h^{[d]}_k(\alpha)\frac{z^k}{k!}\Phi(\alpha) \text{ .}
	\end{align*}
	Finally, we define the generating function
	\begin{align*}
		\hat{H}^{[d]}=e^{H^{[d]}}.
	\end{align*}	
\end{definition}

\begin{remark}\label{5001}
We expand the generating function $\hat{H}^{[d]}$ with coefficients $\hat{h}^{[d]}_k(\alpha)$,
	\begin{align*}
		\hat{H}^{[d]}(z,p)=\hat{H}^{[d]}(z,p_1,p_2,...)=\sum_{n \geq 1}\frac{1}{n!}\sum_{k=1}^{\infty}\sum_{\alpha \vdash n}\hat{h}^{d}_k(\alpha)\frac{z^k}{k!}\Phi(\alpha)\text{ .}
	\end{align*}
It is easy to check $\hat{h}^{[d]}_k(\alpha)$ is the number of $(k+1)$-tuples $(\sigma_1,...,\sigma_{k},\sigma) \in S^{k+1}_{n}$ satisfying the following conditions:
\begin{itemize}
\item $\sigma_i$ is of type $(1^{n-d}d)$ for all $i$ and $\sigma$ is of type $\alpha$,
\item $\sigma_1...\sigma_k\sigma=1$ (the monodromy condition).
\end{itemize}
Compared with $h^{[d]}_k(\alpha)$, we do not have the transitivity condition.
\end{remark}

\begin{definition}\label{5002}
Given a positive integer $n$, let $\alpha$ be a partition of $n$. We define the set $\mathcal{A}^{[d]} (\alpha,k)$ as $(k+1)$-tuples $(\sigma_1,...,\sigma_k,\sigma) \in S^{k+1}_{n}$ satisfying the two conditions in Remark \ref{5001}, i.e.
\begin{itemize}
\item $\sigma_i$ is of type $(1^{n-d}d)$ for all $i$ and $\sigma$ is of type $\alpha$,
\item $\sigma_1...\sigma_k\sigma=1$ (the monodromy condition).
\end{itemize}
Also, we define another set
\begin{align*}
\tilde{\mathcal{A}}^{[d]}(\alpha,k)=\{(\sigma_2,...,\sigma_k,\sigma) \mid ((\sigma_k...\sigma_2\sigma)^{-1},\sigma_2,...,\sigma_k,\sigma) \in \mathcal{A}^{[d]}(\alpha,k)\}.
\end{align*}
\end{definition}

If $S$ is a finite set, $|S|$ is the cardinality of the set $S$.

\begin{remark}\label{5003}
By the definition of $h^{[d]}_k(\alpha)$, we have
\begin{align*}
\hat{h}^{[d]}_k(\alpha)=|\mathcal{A}^{[d]}(\alpha,k)|=|\tilde{\mathcal{A}}^{[d]}(\alpha,k)|.
\end{align*}
Hence, we can write the generating function $\hat{H}^{[d]}(z,p)$ as
\begin{align*}
\hat{H}^{[d]}(z,p)=\hat{H}^{[d]}(z,p_1,p_2,...)=\sum_{n \geq 1}\frac{1}{n!}\sum_{k=1}^{\infty}\sum_{\alpha \vdash n}|\mathcal{A}^{[d]}(\alpha,k)|\frac{z^k}{k!}\Phi(\alpha)\text{ .}
\end{align*}
\end{remark}

\begin{remark}\label{5004}
Consider the generating series $\hat{H}^{[d]}(z,p)$. Given a specific set $\mathcal{A}^{[d]}(\alpha,k)$, $\alpha \vdash n$, the elements in this set are $(k+1)$-tuples $(\sigma_1,...,\sigma_k,\sigma)$. The parameter corresponding to this set is $\frac{z^k}{k!}\Phi(\alpha)$, where the order of $z$ corresponds to the number of $d$-cycles $k$ and $\Phi(\alpha)$ corresponds to the permutation $\sigma$. We take the sum over all partitions. We will get the "set-valued" generating function
\begin{align*}
\sum_{n \geq 1}\frac{1}{n!}\sum_{k=1}^{\infty}\sum_{\alpha \vdash n}\mathcal{A}^{[d]}(\alpha,k)\frac{z^k}{k!}\Phi(\alpha)\text{ .}
\end{align*}
Since every set is finite, we can take the cardinality of each set, and we get the generating function $\hat{H}^{[d]}(z,p)$ in Definition \ref{51} or Remark \ref{5003}.

Similarly, $\frac{\partial \hat{H}^{[d]}}{\partial z}$ is the generating function for the sets $\tilde{\mathcal{A}}^{[d]}(\alpha,k)$, i.e.
\begin{align*}
\frac{\partial \hat{H}^{[d]}}{\partial z}&=\sum_{n \geq 1}\frac{1}{n!}\sum_{k=1}^{\infty}\sum_{\alpha \vdash n}|\tilde{\mathcal{A}}^{[d]}(\alpha,k)|\frac{z^{k-1}}{(k-1)!}\Phi(\alpha)\\
&=\sum_{n \geq 1}\frac{1}{n!}\sum_{k=1}^{\infty}\sum_{\alpha \vdash n}\hat{h}^{[d]}_k(\alpha)\frac{z^{k-1}}{(k-1)!}\Phi(\alpha)\text{ .}
\end{align*}
\end{remark}

\begin{definition}\label{5005}
Let $k,n,d$ be three positive integers, where $n \geq d$. We define the set $\mathcal{A}^{[d]}(k,n)$ as following
\begin{align*}
\mathcal{A}^{[d]}(k,n)=\bigcup_{\alpha \vdash n} \mathcal{A}^{[d]} (k ,\alpha).
\end{align*}
The union is disjoint.
\end{definition}

\begin{lemma}\label{5006}
Let $k,n,d$ be three positive integers, where $n \geq d$. We have
\begin{align*}
\sum_{\alpha \vdash n} h^{[d]}_k(\alpha) \Phi(\alpha)= \sum_{\alpha' \vdash n}h^{[d]}_{k-1}(\alpha') \Phi(K_{1^{n-d}d}\alpha').
\end{align*}
\end{lemma}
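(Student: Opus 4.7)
The plan is to unfold both sides of the identity as sums over tuples of $d$-cycles and then reduce the identity to a single re-indexing using the bijection $\tau \mapsto \tau^{-1}$ on $C_{n,d}$. Throughout I read $h^{[d]}_k(\alpha)$ as $|\mathcal{A}^{[d]}(\alpha,k)|$ in the sense of Remark \ref{5003}; this is also what is needed for the forthcoming proof of Theorem \ref{52}, since $\frac{\partial}{\partial z}e^{H^{[d]}}$ is the generating series of $\hat h^{[d]}_k(\alpha)$.

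First, for the left-hand side I would use the fact that $\Phi(\sigma)$ depends only on the cycle type of $\sigma$, so that $\sum_{\alpha \vdash n} h^{[d]}_k(\alpha)\Phi(\alpha) = \sum_{(\sigma_1,\dots,\sigma_k,\sigma) \in \mathcal{A}^{[d]}(k,n)} \Phi(\sigma)$. The monodromy condition pins down $\sigma = (\sigma_1\cdots\sigma_k)^{-1}$, so the last coordinate is redundant and the left-hand side equals $\sum_{\sigma_1,\dots,\sigma_k \in C_{n,d}} \Phi\bigl((\sigma_1\cdots\sigma_k)^{-1}\bigr)$.

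Next, for the right-hand side, I would first observe that $\Phi(K_{1^{n-d}d}\sigma')$ depends only on the cycle type of $\sigma'$. This follows from centrality of $K_{1^{n-d}d}$ in $\mathbb{C}S_n$ together with the fact that $\Phi$ is a class function on each summand, and it is exactly what justifies the notation $\Phi(K_{1^{n-d}d}\alpha')$. Expanding $K_{1^{n-d}d}=\sum_{\tau \in C_{n,d}}\tau$, using linearity of $\Phi$, and applying the monodromy condition on $\mathcal{A}^{[d]}(k-1,n)$ to eliminate $\sigma' = (\sigma_1\cdots\sigma_{k-1})^{-1}$, the right-hand side becomes $\sum_{\sigma_1,\dots,\sigma_{k-1} \in C_{n,d}} \sum_{\tau \in C_{n,d}} \Phi\bigl(\tau(\sigma_1\cdots\sigma_{k-1})^{-1}\bigr)$.

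Finally, I would match the two expressions by the substitution $\sigma_k := \tau^{-1}$. Since the inverse of a $d$-cycle is again a $d$-cycle, the map $\tau \mapsto \tau^{-1}$ is a bijection of $C_{n,d}$, and under this renaming the right-hand side turns into $\sum_{\sigma_1,\dots,\sigma_k \in C_{n,d}} \Phi\bigl(\sigma_k^{-1}(\sigma_1\cdots\sigma_{k-1})^{-1}\bigr) = \sum_{\sigma_1,\dots,\sigma_k \in C_{n,d}} \Phi\bigl((\sigma_1\cdots\sigma_k)^{-1}\bigr)$, which agrees with the left-hand side. The only non-bookkeeping inputs are (i) the identification of $\Phi(\sigma)$ with a type-only quantity so that the sum can be pushed inside $\Phi$, and (ii) the centrality of $K_{1^{n-d}d}$ in $\mathbb{C}S_n$; I do not expect any real technical obstacle beyond these.
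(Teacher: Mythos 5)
Your proof is correct and takes essentially the same route as the paper: the paper likewise reads $h^{[d]}_k(\alpha)$ as the non-transitive count $|\mathcal{A}^{[d]}(\alpha,k)|$ and proves the identity by stripping one $d$-cycle from each tuple in $\mathcal{A}^{[d]}(k,n)$ via the monodromy condition, so that summing over the $\frac{1}{d}{n \choose d}d!$ possible attached $d$-cycles reassembles $K_{1^{n-d}d}$ --- exactly your re-indexing of the sums. Your write-up merely makes explicit two points the paper leaves implicit, namely the well-definedness of $\Phi(K_{1^{n-d}d}\alpha')$ via centrality and the substitution $\tau \mapsto \tau^{-1}$ (which one could even skip, since $\Phi(g)=\Phi(g^{-1})$ for any permutation $g$).
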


\begin{proof}
First, we consider about the sets $\mathcal{A}^{[d]}(k,n)$ and $\mathcal{A}^{[d]}(k-1,n)$. Given any element $(\sigma_1,...,\sigma_k,\sigma) \in \mathcal{A}^{[d]}(k,n)$, it corresponds to a unique element $(\sigma_2,...,\sigma_k,\sigma') \in \mathcal{A}^{[d]}(k-1,n)$, where $\sigma_2...\sigma_k\sigma'=1$. Now given any element $(\sigma_2,...,\sigma_k,\sigma') \in \mathcal{A}^{[d]}(k-1,n)$ and any $d$-cycle $\sigma_1$, we can construct an element $(\sigma_1,...,\sigma_k,\sigma) \in \mathcal{A}^{[d]}(k,n)$, where $\sigma_1...\sigma_k\sigma=1$. Indeed, we can construct different elements in $\mathcal{A}^{[d]}(k,n)$ by adding different $d$-cycles $\sigma_1$ to $(\sigma_2,...,\sigma_k,\sigma')$. The number of elements we construct from the element $(\sigma_2,...,\sigma_k,\sigma')$ is $\frac{1}{d}{n \choose d}d!$, where $\frac{1}{d}{n \choose d}d!$ is the number of $d$-cycles in $S_n$. From the discussion, we can get all elements in $\mathcal{A}^{[d]}(k,n)$ by adding different $d$-cycles to elements in $\mathcal{A}^{[d]}(k-1,n)$. Also, we have
\begin{align*}
|\mathcal{A}^{[d]}(k,n)|=\frac{1}{d}{n \choose d}d!|\mathcal{A}^{[d]}(k-1,n)|.
\end{align*}
Recall the definition of $\mathcal{A}^{[d]}(k,n)$,
\begin{align*}
\mathcal{A}^{[d]}(k,n)=\bigcup_{\alpha \vdash n} \mathcal{A}^{[d]} (k ,\alpha).
\end{align*}
Hence, we have the following formula
\begin{align*}
\sum_{\alpha \vdash n} h^{[d]}_k(\alpha) \Phi(\alpha)= \sum_{\alpha' \vdash n}h^{[d]}_{k-1}(\alpha') \Phi(K_{1^{n-d}d}\alpha').
\end{align*}
\end{proof}

\begin{theorem}\label{52}
	$\hat{H}^{[d]}$ is the solution to the differential equation
	\begin{align*}
		\frac{\partial \hat{H}^{[d]}}{\partial z} = W([d]) \hat{H}^{[d]}
	\end{align*}
	with initial condition
	\begin{align*}
		\hat{H}^{[d]}(0,p)=e^{p_1}
	\end{align*}
\end{theorem}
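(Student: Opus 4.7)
The plan is to derive the differential equation by chaining three results already proved in the paper, and then verify the initial condition by direct inspection of the combinatorial definition.

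First, I would express $\partial \hat{H}^{[d]}/\partial z$ using the generating series expansion in Remark \ref{5004}:
\begin{align*}
\frac{\partial \hat{H}^{[d]}}{\partial z}=\sum_{n\geq 1}\frac{1}{n!}\sum_{k\geq 1}\frac{z^{k-1}}{(k-1)!}\sum_{\alpha\vdash n}\hat{h}^{[d]}_k(\alpha)\,\Phi(\alpha).
\end{align*}
The inner coefficient is exactly what Lemma \ref{5006} controls (its proof, via the sets $\mathcal{A}^{[d]}(k,n)$, works for the disconnected counts $\hat h^{[d]}_k$ since it does not invoke transitivity): prepending an arbitrary $d$-cycle $\sigma_1$ to a tuple in $\mathcal{A}^{[d]}(k-1,n)$ gives a bijection-with-multiplicity yielding
\begin{align*}
\sum_{\alpha\vdash n}\hat{h}^{[d]}_k(\alpha)\,\Phi(\alpha)=\sum_{\alpha'\vdash n}\hat{h}^{[d]}_{k-1}(\alpha')\,\Phi(K_{1^{n-d}d}\alpha').
\end{align*}

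Next, I would invoke Theorem \ref{112} to replace $\Phi(K_{1^{n-d}d}\alpha')$ by $W([d])\Phi(\alpha')$. Because $W([d])$ is a linear differential operator independent of $z$, $n$, $k$, and $\alpha'$, it commutes with the constants $1/n!$ and $z^{k-1}/(k-1)!$ and can be pulled outside every summation. After the reindexing $j=k-1$, the remaining sum is precisely the full expansion of $\hat{H}^{[d]}$ (including the $j=0$ summand coming from the disconnected interpretation $\hat H^{[d]}=e^{H^{[d]}}$), which gives
\begin{align*}
\frac{\partial\hat{H}^{[d]}}{\partial z}=W([d])\,\hat{H}^{[d]}.
\end{align*}

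For the initial condition, setting $z=0$ annihilates every $k\geq 1$ contribution; the surviving $k=0$ part counts $1$-tuples $(\sigma)$ with $\sigma=1\in S_n$, which has type $(1^n)$, so $\hat h^{[d]}_0((1^n))=1$ and $\Phi((1^n))=p_1^n$. Summing over $n$ with the $1/n!$ weight yields $\hat H^{[d]}(0,p)=\sum_{n\geq 0}p_1^n/n!=e^{p_1}$.

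The only real subtlety is the bookkeeping around the $k=0$ term: the series for $H^{[d]}$ starts at $k=1$, so I must be careful to track how the trivial (no branch point) configurations appear in $\hat H^{[d]}=e^{H^{[d]}}$ and supply exactly the $e^{p_1}$ factor needed both to close the inductive step (the reindexed sum must reconstitute $\hat H^{[d]}$ and not $\hat H^{[d]}-e^{p_1}$) and to match the stated initial condition. Beyond this bookkeeping everything is formal, since the hard combinatorial content has already been absorbed into Theorem \ref{112} and Lemma \ref{5006}.
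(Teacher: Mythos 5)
Your proposal is correct and follows essentially the same route as the paper's own proof: expand $\partial \hat{H}^{[d]}/\partial z$ via Remark \ref{5004}, apply Lemma \ref{5006} to shift the branch-point index, invoke Theorem \ref{112} to replace $\Phi(K_{1^{n-d}d}\alpha')$ by $W([d])\Phi(\alpha')$, and reassemble the series. If anything you are more careful than the paper on the two points it glosses over, namely that Lemma \ref{5006} is really a statement about the disconnected counts $\hat{h}^{[d]}_k$ (the paper's displayed computation drops the hats) and the $k=0$/empty-cover bookkeeping needed both to reconstitute $\hat{H}^{[d]}$ after reindexing and to obtain the initial condition $e^{p_1}$, which the paper dismisses as ``easy to check.''
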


\begin{proof}
\begin{align*}
\frac{\partial \hat{H}^{[d]}}{\partial z} &=\sum_{n \geq 1}\frac{1}{n!}\sum_{k=1}^{\infty}\sum_{\alpha \vdash n}\frac{z^{k-1}}{(k-1)!}h^{[d]}_k(\alpha)\Phi(\alpha) \\
&=\sum_{n \geq 1}\frac{1}{n!}\sum_{k=1}^{\infty}\sum_{\alpha \vdash n}\frac{z^{k-1}}{(k-1)!}h^{[d]}_{k-1}(\alpha) \Phi(K_{1^{n-d}d}\alpha) \\
&=\sum_{n \geq 1}\frac{1}{n!}\sum_{k=1}^{\infty}\sum_{\alpha \vdash n}\frac{z^{k-1}}{(k-1)!}h^{[d]}_{k-1}(\alpha) W([d])\Phi(\alpha) \\
&=W([d]) \hat{H}^{[d]},
\end{align*}
where the first equality comes from \ref{5004}, the second equality is the consequence of Lemma \ref{5006} and the last equality comes from Theorem \ref{112}. It is easy to check the initial condition.
\end{proof}

\end{document}